\newcommand{\be}{\begin{equation}}
\newcommand{\ee}{\end{equation}}
\numberwithin{equation}{section}
\numberwithin{figure}{section}
\newtheorem{theorem}{Theorem}[section]
\newtheorem{proposition}[theorem]{Proposition}
\newtheorem{remark}[theorem]{Remark}
\newtheorem{lemma}[theorem]{Lemma}
\newtheorem{corollary}[theorem]{Corollary}
\newtheorem{definition}[theorem]{Definition}
\begin{document}

\title[Smooth periodic waves in the Camassa-Holm equation]{Stability of smooth periodic traveling waves in the Camassa-Holm equation}

\author{Anna Geyer}
\address[A. Geyer]{Delft Institute of Applied Mathematics, Faculty Electrical Engineering, Mathematics and
	Computer Science, Delft University of Technology, Mekelweg 4, 2628 CD Delft, The Netherlands}
\email{A.Geyer@tudelft.nl}

\author{Renan H. Martins}
\address[R.H. Martins]{Departamento de Matem\'{a}tica - Universidade Estadual de Maring\'{a}, Avenida Colombo 5790, CEP 87020-900, Maring\'{a}, PR, Brazil}
\email{r3nan$\_$s@hotmail.com}

\author{F\'{a}bio Natali}
\address[F. Natali]{Departamento de Matem\'{a}tica - Universidade Estadual de Maring\'{a}, Avenida Colombo 5790, CEP 87020-900, Maring\'{a}, PR, Brazil}
\email{fmanatali@uem.br}

\author{Dmitry E. Pelinovsky}
\address[D.E. Pelinovsky]{Department of Mathematics and Statistics, McMaster University,
	Hamilton, Ontario, Canada, L8S 4K1}
\email{dmpeli@math.mcmaster.ca}

\subjclass[2000]{76B15, 37K45, 35Q53.}

\keywords{Camassa Holm equation, periodic traveling waves, spectral stability}

\begin{abstract}
	Smooth periodic travelling waves in the Camassa--Holm (CH) equation are revisited. We show that these periodic waves can be characterized in two different ways by using two different Hamiltonian structures. The standard formulation, common to the Korteweg--de Vries (KdV) equation, has several disadvantages, e.g., the period function is not monotone and the quadratic energy form may have two rather than one negative eigenvalues. We explore the nonstandard formulation and prove that the period function is monotone and the quadratic energy form has only one simple negative eigenvalue. We deduce a precise condition for the spectral and orbital stability of the smooth periodic travelling waves and show numerically that this condition is satisfied in the open region where the smooth periodic waves exist.
\end{abstract}

\date{\today}
\maketitle

\section{Introduction}

The Camassa--Holm (CH) equation
\begin{equation}
\label{CH}
u_t-u_{txx}+3uu_x=2u_xu_{xx}+uu_{xxx}
\end{equation}
 was derived in \cite{CH,CHH} and justified in \cite{CL,Johnson} as a model for the propagation of unidirectional shallow water waves. A generalized version of this equation also models propagation of nonlinear waves inside a cylindrical hyper-elastic rod with a small diameter \cite{Dai}. The CH equation can be interpreted geometrically in terms of geodesic flows on the diffeomorphism group \cite{Kouranbaeva,Mis1}.

We consider the CH equation (\ref{CH}) on the periodic domain $\mathbb{T}_L := [0,L]$ of length $L > 0$. For notational simplicity, we write 
$H^s_{\rm per}$ instead of $H^s_{\rm per}(\mathbb{T}_L)$.
The CH equation (\ref{CH}) on $\mathbb{T}_L$ 
conserves formally the mass, momentum, and energy 
given respectively by 
\begin{equation}
\label{Mu}
M(u)=\int_0^L u dx,
\end{equation}
\begin{equation}
\label{Eu}
E(u)=\frac{1}{2} \int_0^L (u_x^2+u^2) dx,
\end{equation}
and
\begin{equation}
\label{Fu}
F(u)=\frac{1}{2} \int_0^L (u^3+uu_x^2) dx.
\end{equation}

Many results are available for the CH equation (\ref{CH}) in the periodic domain $\mathbb{T}_L$. The initial-value problem is locally well-posed in the space $H^3_{\rm per}$ \cite{CE,CE2000}, 
$H^s_{\rm per}$ with $s > \frac{3}{2}$ \cite{CE-1998,hakka,HM2002}, $C^1_{\rm per}$ \cite{Mis2}, and $H^1_{\rm per} \cap {\rm Lip}$ \cite{DKT}, where ${\rm Lip}$ stands for Lipshitz continuous functions suitable for peaked periodic waves. 

Smooth, peaked, and cusped periodic travelling waves were classified in \cite{Len3}. Cusped periodic waves were constructed in \cite{B,Him} 
to show that local solutions in $H^1_{\rm per}$ are not uniformly continuous 
with respect to the initial data. More recently, the period function for the smooth periodic waves was analyzed in \cite{GV}. 
 
Orbital stability of the smooth periodic travelling waves in $H^1_{\rm per}$ was obtained in \cite{Len4} with the inverse scattering transform for initial data 
$u_0$ in $H^3_{\rm per}$ such that $m_0 := u_0 - u_0''$ is strictly positive. Thanks to the Lax representation, the orbital stability of the smooth periodic waves in the time evolution of the CH equation (\ref{CH}) follows from the structural stability of the Floquet spectrum in the associated Lax equations. 

Orbital stability of peaked periodic waves in $H^1_{\rm per}$ was proven in  \cite{Len1,Len2} by using two different variational methods, each uses the three conserved quantities (\ref{Mu}), (\ref{Eu}), and (\ref{Fu}). The recent work \cite{MP20} shows that the perturbations to the peaked periodic waves grow exponentially in $H^1_{\rm per} \cap W^{1,\infty}$ and may blow up in finite time. 

Stability of cusped periodic waves is an open problem due to the lack of continuity with respect to initial data in $H^1_{\rm per}$.

Regarding the limit to the solitary waves, orbital stability of smooth solitary waves in $H^1(\mathbb{R})$ was obtained in \cite{CS2} for a modified version of the CH equation, where the standard orbital stability for solitary waves hold.   
Orbital stability of peaked solitary waves in $H^1(\mathbb{R})$ was obtained 
in \cite{CM,CS} but the recent work \cite{NP} showed that the perturbations 
to the peaked solitary waves actually grow in $W^{1,\infty}(\mathbb{R})$.

{\em The main purpose of this paper} is to address the spectral and orbital stability of the smooth periodic waves by using the analytic theory used for stability of periodic waves in other nonlinear evolution equations of KdV type \cite{ANP,GP1,haragus,HJ,J,NLP}. 

In the standard spectral stability theory, we identify the smooth periodic travelling wave as a critical point of the action functional and compute the number of negative eigenvalues in the linearized operator which represents the quadratic energy form. The number of negative eigenvalues is typically controlled by the monotonicity of the period function for the smooth periodic waves \cite{J}. If the action functional is a linear combination of the mass, momentum, and energy, the spectral stability is determined by positivity of the linearized operator under the constraints of fixed mass and momentum. If the periodic waves of a fixed period are extended smoothly with respect to parameters, derivatives of the mass and momentum computed at the periodic waves  with respect to their parameters determine the number of negative eigenvalues of the linearized operator under the constraints.

The technique is straightforward in the case when the linearized operator has only one simple negative eigenvalue, e.g. in \cite{GP1}. Moreover, orbital 
stability of such periodic waves in the energy space can be easily concluded from its spectral stability \cite{ANP}. However, computations become messy 
when the linearized operator has two negative eigenvalues and the periodic wave has limited smoothness with respect to its parameters \cite{HJ,NLP}.

{\em The main novelty of this paper} is to show that stability of the smooth periodic waves of the CH equation (\ref{CH}) can be characterized in two different ways by using two different Hamiltonian structures \cite{CCQ}. The two equivalent formulations are related to two different parameters $a$ and $b$ of the periodic travelling wave solutions in addition to the wave speed $c$. The standard formulation common to evolution equations 
of KdV type has many disadvantages, whereas the nonstandard formulation common to the evolution equations of CH type is suitable for the proof of spectral 
and orbital stability of the smooth periodic waves. 
The results obtained here for the CH equation (\ref{CH}) could be applicable 
to other evolution equations of CH type such as the Degasperis--Procesi 
equation or the $b$-family of the CH equations \cite{HGH}.

We shall now describe {\em the main results of this paper}.

Smooth traveling waves of the form $u(x,t) = \phi(x-ct)$ with speed $c$
and profile $\phi$ satisfy the third-order differential equation 
\begin{equation}
\label{third-order}
-(c-\phi) (\phi''' - \phi') - 2 \phi \phi' + 2 \phi' \phi'' = 0.
\end{equation}
This equation can be integrated in two different ways. The standard integration of (\ref{third-order}) in $x$ gives the second-order equation:
\begin{equation}\label{CHode}
-(c-\phi)\phi''+c\phi-\frac{3}{2}\phi^2+\frac{1}{2} \phi'^2 = b,
\end{equation}
where $b$ is the integration constant. However, another integration is obtained after multiplying (\ref{third-order})  by $(c-\phi)$, 
which yields the second-order equation:
\begin{equation}
\label{second-order}
-(c - \phi)^2 (\phi'' - \phi) = a,
\end{equation}
where $a$ is another integration constant. Both second-order equations (\ref{CHode}) and (\ref{second-order}) are compatible if and only if 
$\phi$ satisfies the first-order invariant:
\begin{equation}
\label{quadra}
(c-\phi)(\phi'^2 - \phi^2 - 2 b) + 2a = 0.
\end{equation}
It is easy to verify that one of the three equations 
(\ref{CHode}), (\ref{second-order}), and (\ref{quadra}) 
is satisfied if and only if the other two equations are satisfied. 
We consider the smooth $L$-periodic travelling wave solutions of \eqref{CH}, 
which means that we are looking for solutions 
$\phi \in H^{\infty}_{\rm per}$ of the system (\ref{CHode}), (\ref{second-order}), and (\ref{quadra}).

Let us connect the second-order equations (\ref{CHode}) and (\ref{second-order})
with two different Hamiltonian structures of the CH equation (\ref{CH}). 

The standard Hamiltonian structure for the CH equation (\ref{CH}) is given by 
\begin{equation}
\label{sympl-1}
\frac{du}{dt} = J \frac{\delta F}{\delta u}, \quad 
J = -(1-\partial_x^2)^{-1} \partial_x, \quad 
\frac{\delta F}{\delta u} = \frac{3}{2} u^2 - u u_{xx} - \frac{1}{2} u_x^2,
\end{equation}
where $J$ is a well-defined operator from $H^s_{\rm per}$ to $H^{s+1}_{\rm per}$ for every $s \in \mathbb{R}$ and $\frac{\delta F}{\delta u}$ is an operator from $H^{s}_{\rm per}$ to $H^{s-2}_{\rm per}$ for $s > \frac{3}{2}$ thanks to Sobolev's embedding of $H^s_{\rm per}$ into $C^1_{\rm per}$. The evolution problem (\ref{sympl-1}) is well-defined for local solutions  
$u \in C((-t_0,t_0),H^s_{\rm per}) \cap C^1((-t_0,t_0),H^{s-1}_{\rm per})$ with some $t_0 > 0$ and  $s > \frac{3}{2}$, see \cite{CE-1998,hakka,HM2002}.

The second-order equation (\ref{CHode}) is the Euler--Lagrange equation for the action functional 
\begin{equation}
\label{action-1}
\Lambda_{c,b}(u) := c E(u) - F(u) - b M(u).
\end{equation}
The linearized operator for the second-order equation (\ref{CHode}) 
is given by 
\begin{equation}
\mathcal{L} := - \partial_x (c-\phi) \partial_x + (c - 3\phi + \phi''),
\label{hill} 
\end{equation} 
which is related to the action functional (\ref{action-1}) as $\mathcal{L}= \Lambda_{c,b}''(\phi)$. The linearized operator $\mathcal{L} : H^2_{\rm per} \subset L^2_{\rm per} \mapsto L^2_{\rm per}$ is a self-adjoint, unbounded operator in $L^2_{\rm per}$ equipped with the standard inner product $\langle \cdot, \cdot \rangle$. 

The alternative Hamiltonian structure for the CH equation (\ref{CH}) 
is given by 
\begin{equation}
\label{sympl-2}
\frac{d m}{dt} = J_m \frac{\delta E}{\delta m}, \quad J_m = -\left( m \partial_x + \partial_x m \right), \quad 
\frac{\delta E}{\delta m} = u, 
\end{equation}
where $m := u - u_{xx}$ and $E(u)$ can be written equivalently as 
\begin{equation}
\label{Em}
E(u) = \frac{1}{2} \int_0^L (u_x^2+u^2) dx = \frac{1}{2} \int_0^L u m dx = 
\frac{1}{2} \int_0^L m (1-\partial_x^2)^{-1} m dx.
\end{equation}
By using (\ref{sympl-2}), the CH equation (\ref{CH}) 
is rewritten in the local differential form
\begin{equation}\label{CHm}
m_t + u m_x + 2m u_x = 0.
\end{equation}
The second-order equation (\ref{second-order}) is related to the action functional 
\begin{equation}
\label{action-2}
\Lambda_c(m) := E(u) - c M(u), \quad u := (1-\partial_x^2)^{-1} m.
\end{equation}
Indeed, the Euler--Lagrange equation 
$J_m \frac{\delta \Lambda_c}{\partial m} = 0$  gives 
the differential equation 
\begin{equation}
\label{m-phi-connection}
\mu'(\phi - c) + 2 \mu \phi' = 0,
\end{equation}
where $\mu = \phi - \phi''$ or $\phi = (1-\partial_x^2)^{-1} \mu$.
Integration of (\ref{m-phi-connection}) multiplied by $(\phi-c)$ yields $(c-\phi)^2 \mu = a$ which is equivalent to (\ref{second-order}). The linearized operator for $(c-\phi)^3 \mu = a(c-\phi)$ acting on $\mu$ is given by 
\begin{equation}
\mathcal{K} := (c - \phi)^3 - 2a (1 - \partial_x^2)^{-1},
\label{hill-m} 
\end{equation} 
The linearized operator $\mathcal{K} : L^2_{\rm per} \mapsto L^2_{\rm per}$ 
is the sum of a bounded and a compact self-adjoint operator in $L^2_{\rm per}$. 
The relation of this operator to the action functional (\ref{action-2}) 
is not obvious and will be shown in Lemma \ref{lem-equivalency}.

Let us now give the definitions of spectral and orbital stability 
of the smooth periodic travelling waves in the CH equation (\ref{CH}). 

\begin{definition}\label{defstab-spectral}
	We say that the smooth periodic travelling wave $\phi \in H^{\infty}_{\rm per}$ is spectrally stable in the evolution problem (\ref{CH}) if the spectrum of $J \mathcal{L}$ in $L^2_{\rm per}$ 
	is located on the imaginary axis. 
\end{definition}

\begin{definition}\label{defstab}
	We say that the smooth periodic travelling wave $\phi \in H^{\infty}_{\rm per}$ is orbitally stable in the evolution problem (\ref{CH}) in $H_{\rm per}^1$ if for any $\varepsilon>0$ there exists $\delta>0$ such that for any $u_0 \in H^s_{\rm per}$ with $s > \frac{3}{2}$ satisfying 
	\begin{equation*}
		\|u_0-\phi\|_{H_{\rm per}^1}<\delta,
	\end{equation*}
the global solution $u \in C(\mathbb{R},H^s_{\rm per})$ with the initial data $u_0$ 
satisfies
	\begin{equation*}
	\inf_{r\in{\mathbb R}} \| u(t,\cdot) - \phi(\cdot+r) \|_{H_{\rm per}^1} <\varepsilon
	\end{equation*}
	for all $t\geq0$.
\end{definition}

The following two theorems represent the main results of this paper.

\begin{theorem}
	\label{theorem-existence}
For a fixed $c > 0$, smooth periodic solutions of the system (\ref{CHode}), (\ref{second-order}), and (\ref{quadra}) exist in an open, simply connected region on the $(a,b)$ plane closed by three boundaries: 
	\begin{itemize}
		\item $a = 0$ and $b \in (-\frac{1}{2} c^2,0)$ (where the periodic solutions are peaked), 
		\item $a = a_+(b)$ and $b \in (0,\frac{1}{6} c^2)$ (where the solutions have infinite period),
		\item $a = a_-(b)$ and $b \in (-\frac{1}{2} c^2,\frac{1}{6} c^2)$ (where the solutions are constant), 
	\end{itemize}
where $a_+(b)$ and $a_-(b)$ are  smooth functions of $b$ specified in Lemmas \ref{remark-right} and \ref{remark-upper}. For every point inside the region, the periodic solutions are smooth functions of $(a,b,c)$ and their period is strictly increasing in $b$ for every fixed $a \in (0,\frac{4}{27} c^3)$ and $c > 0$. Moreover, $\mathcal{K}$ has exactly one simple negative eigenvalue, a simple zero eigenvalue, and the rest of its spectrum in $L^2_{\rm per}$ is strictly positive and bounded away from zero.
\end{theorem}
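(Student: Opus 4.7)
My plan is to tackle the three assertions in sequence. For existence, I rewrite the first-order invariant (\ref{quadra}) as $(c-\phi)\phi'^2 = P(\phi;a,b,c)$ with $P(\phi;a,b,c) := (c-\phi)(\phi^2+2b) - 2a$, and treat it as a planar Hamiltonian system. Smooth $L$-periodic solutions correspond to closed orbits on which $P(\phi)\geq 0$ and $c-\phi>0$, and such orbits exist precisely when the cubic $P$ has three simple real roots $\phi_1<\phi_2\leq\phi_3<c$ with $\phi$ oscillating on $[\phi_1,\phi_2]$. The three degenerations give the three boundaries: $a=0$ forces $P(c)=0$, so $c-\phi$ may touch zero, producing peaked profiles; $a=a_+(b)$ is the coalescence $\phi_2=\phi_3$, a homoclinic loop with infinite period; $a=a_-(b)$ is $\phi_1=\phi_2$, a constant solution. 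Imposing a double root of $P$ yields explicit formulas for $a_\pm(b)$ matching those in \cite{GV}, and the implicit function theorem applied to $P$ and its discriminant gives smoothness of $\phi$ in $(a,b,c)$ and simple connectivity of the interior.

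\textbf{Monotonicity of the period.}
I would express the period as $T(a,b,c) = 2\int_{\phi_1}^{\phi_2}\sqrt{(c-\phi)/P(\phi)}\,d\phi$ and compute $\partial_b T$ by differentiation under the integral, regularizing the endpoint singularities via an integration by parts so that only regular elliptic integrals remain. Reducing to a linear combination of complete elliptic integrals of the first and second kinds, a Picard--Fuchs identity together with a monotonicity analysis in the elliptic modulus should give $\partial_b T>0$ throughout the interior. This is the analytic heart of the result; the parametrization $(a,b,c)$ mirrors the setup of \cite{GV}, which I would adapt.

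\textbf{Spectral analysis of $\mathcal{K}$.}
Since $(c-\phi)^3$ is a strictly positive bounded multiplication on $L^2_{\rm per}$ with $\min_x(c-\phi)^3>0$ inside the existence region, and $(1-\partial_x^2)^{-1}$ is compact, Weyl's theorem gives $\sigma_{\rm ess}(\mathcal{K}) = [\min_x(c-\phi)^3,\max_x(c-\phi)^3]$, a positive interval bounded away from zero. Hence only isolated eigenvalues of finite multiplicity can lie below $\sigma_{\rm ess}(\mathcal{K})$. Differentiating $(c-\phi)^2\mu = a$ in $x$ gives the key identity $(c-\phi)\mu'=2\phi'\mu$, and using $(1-\partial_x^2)^{-1}\mu'=\phi'$ I obtain $\mathcal{K}\mu' = (c-\phi)^3\mu' - 2a\phi' = 2a\phi'-2a\phi' = 0$, so $\mu'\in\ker\mathcal{K}$. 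Simplicity of the zero eigenvalue then follows by converting $\mathcal{K}v=0$ into the second-order linear ODE $(c-\phi)^3(w-w'')=2aw$ for $w:=(1-\partial_x^2)^{-1}v$, whose only periodic solution is $w=\phi'$; the second linearly independent solution is ruled out via a Wronskian/Floquet calculation.

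\textbf{Counting the negative eigenvalues.}
Differentiating the profile equation with respect to $b$ (with $a,c$ fixed) produces a relation $\mathcal{K}(\partial_b\mu)=g$ for an explicit $g$ lying in $\mathrm{span}\{1,(c-\phi)\}$. Pairing with $\partial_b\mu$ gives a scalar whose sign is controlled by $\partial_b T$, so the monotonicity established above produces a test direction $v$ with $\langle\mathcal{K}v,v\rangle<0$, hence at least one negative eigenvalue. A Lagrange multiplier argument applied to $\mathcal{K}$ restricted to the orthogonal complement of the constraints coming from the two conserved quantities, combined with the already established simplicity of $\ker\mathcal{K}$, caps the Morse index at exactly one. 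The main obstacle I foresee is bookkeeping of the correct constrained subspace and the sign of the resulting $2\times 2$ Jacobian determinant, but the overall structure parallels the treatment of periodic waves of KdV type in \cite{ANP,GP1,J}.
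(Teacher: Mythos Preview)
Your existence sketch and the identification of $\ker(\mathcal{K})$ are broadly on track, but two places need correction.

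For the monotonicity of the period in $b$, your plan (differentiate the period integral, reduce to complete elliptic integrals, invoke a Picard--Fuchs identity) is a different route from the paper's and is only a sketch. The paper instead transforms the system to a Hamiltonian $H(x,y)=\tfrac{1}{2}y^2+V(x)$ with a center at the origin and applies Chicone's convexity criterion: $\ell'(h)>0$ whenever $W:=V/(V')^2$ is convex on the relevant interval. They compute $W''$ explicitly as a rational function whose sign is governed by a cubic polynomial $R(x)$, and verify $R<0$ via its discriminant. This avoids elliptic-integral bookkeeping entirely and gives a clean self-contained proof; your approach might be made to work but carries considerably more risk.

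The genuine gap is in your count of negative eigenvalues of $\mathcal{K}$. First, differentiating $(c-\phi)^2\mu=a$ in $b$ with $a,c$ fixed gives $(c-\phi)^3\partial_b\mu=2a\,\partial_b\phi$, and since $(1-\partial_x^2)^{-1}\partial_b\mu=\partial_b\phi$ you obtain $\mathcal{K}(\partial_b\mu)=0$ formally, not $\mathcal{K}(\partial_b\mu)=g\in\mathrm{span}\{1,c-\phi\}$; however $\partial_b\mu$ is \emph{not} $L$-periodic (the period varies with $b$), so it is not an element of $L^2_{\rm per}$ and cannot be used as a test function. Second, a Lagrange-multiplier argument on the orthogonal complement of two constraints cannot cap the \emph{unconstrained} Morse index at one: positivity on a codimension-two subspace only yields $n(\mathcal{K})\leq 2$. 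The paper proceeds differently: it uses Sylvester's inertia law to pass from $\mathcal{K}$ to the Schr\"odinger operator $\mathcal{M}=-\partial_x^2+1-2a(c-\phi)^{-3}$ (precisely the operator arising in your reduction of $\mathcal{K}v=0$), notes that $\mathcal{M}\phi'=0$ and $\mathcal{M}\,\partial_b\phi=0$, and then applies Floquet theory. The non-periodic solution satisfies $\partial_b\phi(x+L)=\partial_b\phi(x)+\theta\,\phi'(x)/\phi''(0)$ with $\mathrm{sign}(\theta)=\mathrm{sign}(\partial_b\mathfrak{L})$. Since $\partial_b\mathfrak{L}>0$ by the monotonicity step, $\theta>0$, and Neves' criterion then places zero as the \emph{second} eigenvalue of $\mathcal{M}$, so $n(\mathcal{K})=n(\mathcal{M})=1$. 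Your Wronskian/Floquet remark for simplicity was one step away from this mechanism, but you did not use it for the index count and took an unworkable detour instead.
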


\begin{theorem}
	\label{theorem-stability}
	For a fixed $c > 0$ and a fixed period $L > 0$, there exists 
	a $C^1$ mapping $a \mapsto b = \mathcal{B}_L(a)$ for $a \in (0,a_L)$ with some $a_L \in (0,\frac{4}{27} c^3)$ 
	and a $C^1$ mapping $a \mapsto \phi = \Phi_L(\cdot,a) \in H^{\infty}_{\rm per}$ of smooth $L$-periodic solutions along the curve $b = \mathcal{B}_L(a)$.	Let 
	$$
	\mathcal{M}_L(a) := M(\Phi_L(\cdot,a)) \quad \mbox{\rm and} \quad 
	\mathcal{E}_L(a) := E(\Phi_L(\cdot,a)). 
	$$
	The $L$-periodic wave with profile $\Phi_L(\cdot,a)$ is spectrally and orbitally stable in the sense of Definitions 
	\ref{defstab-spectral} and \ref{defstab} respectively, if the mapping
\begin{equation}
\label{stability-criterion}
	a \mapsto \frac{\mathcal{E}_L(a)}{\mathcal{M}_L(a)^2}
\end{equation}
	is strictly decreasing.
\end{theorem}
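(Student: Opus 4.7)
My plan is to adapt the Grillakis--Shatah--Strauss stability framework to the nonstandard Hamiltonian formulation \eqref{sympl-2}. The spectral input from Theorem \ref{theorem-existence}---that $\mathcal{K}$ has exactly one simple negative eigenvalue, one simple zero eigenvalue, and the rest of its spectrum strictly positive and bounded away from zero---is precisely the configuration in which the machinery delivers a single scalar stability criterion, and the whole purpose of switching from \eqref{sympl-1} to \eqref{sympl-2} is to reach this ``one-bad-direction'' setting. The $C^1$ family is obtained immediately from the implicit function theorem: since Theorem \ref{theorem-existence} asserts that the period $T(a,b,c)$ is strictly monotone in $b$ at fixed admissible $a$ and $c>0$, solving $T(a,\mathcal{B}_L(a),c)=L$ produces the $C^1$ map $a\mapsto\mathcal{B}_L(a)$, and composition with the smooth dependence of $\phi$ on its parameters delivers $a\mapsto\Phi_L(\cdot,a)\in H^\infty_{\rm per}$ together with the $C^1$ functions $\mathcal{M}_L$ and $\mathcal{E}_L$.

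Next, I would exhibit $\mu:=(1-\partial_x^2)\phi$ as a critical point of the Lyapunov functional
\[
\mathcal{V}(m):=-E(u)+cM(u)-2\sqrt{a}\int_0^L\sqrt{m}\,dx,\qquad u=(1-\partial_x^2)^{-1}m,
\]
built from $E$, $M$, and the Casimir $\int_0^L\sqrt{m}\,dx$ of $J_m$; its Euler--Lagrange equation reduces to $(c-\phi)^2\mu=a$, and a direct variation yields $\mathcal{V}''(\mu)=(2a)^{-1}\mathcal{K}$, so the single negative direction of $\mathcal{K}$ plays the role of the ``bad'' direction in the setup, while the zero eigenvalue is accounted for by translation invariance. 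To count constrained negative eigenvalues I would differentiate the profile identity $(c-\phi)^2\mu=a$ along the curve $b=\mathcal{B}_L(a)$, which produces the modulation identity
\[
\mathcal{K}\,\partial_a\mu = c-\phi = c\,\frac{\delta M}{\delta m}-\frac{\delta E}{\delta m};
\]
pairing this with $\partial_a\mu$ and with $\frac{\delta M}{\delta m}=1$ and $\frac{\delta E}{\delta m}=\phi$ then produces a $2\times 2$ matrix whose signature, via the Jacobi/Sylvester criterion for constrained quadratic forms, decides whether the restriction of $\mathcal{K}$ to the codimension-two subspace orthogonal to $\frac{\delta M}{\delta m}$ and $\frac{\delta E}{\delta m}$ is non-negative. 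The expected algebraic reduction---the technical heart of the proof---identifies the determining sign as that of $-\frac{d}{da}(\mathcal{E}_L/\mathcal{M}_L^2)$.

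With constrained positivity in hand, spectral stability in the sense of Definition \ref{defstab-spectral} follows from Pontryagin / Hamilton--Krein-space arguments applied to $J\mathcal{L}$ (equivalently to $J_\mu\mathcal{K}$): the number of unstable eigenvalues is bounded by the negative directions of the Hessian unaccounted for by constraints, which is zero. For orbital stability in the sense of Definition \ref{defstab}, the functional $\mathcal{V}$ is coercive at $\mu$ on the constraint manifold $\{M(u)=\mathcal{M}_L(a),\,E(u)=\mathcal{E}_L(a)\}$ modulo spatial translations; together with conservation of $E$ and $M$ along the $H^s_{\rm per}$ flow for $s>\tfrac{3}{2}$, the equivalence of $E$ with the $H^1_{\rm per}$ norm, and a modulation of the parameter $a$ to match the initial values of $M$ and $E$, this yields the standard $\varepsilon$--$\delta$ estimate. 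The main obstacle I anticipate is the algebraic reduction of the index determinant to the precise expression $\frac{d}{da}(\mathcal{E}_L/\mathcal{M}_L^2)$: it should rely on repeated use of the first-order invariant \eqref{quadra} together with the differential identities for $\partial_a\phi$, $\partial_a b$, and $\partial_a\mu$ along $b=\mathcal{B}_L(a)$, and the specific power $\mathcal{M}_L^2$ in the denominator presumably reflects the quadratic dependence of the Casimir term on wave amplitude against the linear dependence of $M$.
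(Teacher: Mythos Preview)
Your overall strategy---working in the nonstandard Hamiltonian formulation (\ref{sympl-2}), exploiting that $\mathcal{K}$ has exactly one simple negative eigenvalue by Theorem \ref{theorem-existence}, and reducing stability to positivity of $\mathcal{K}$ on the subspace orthogonal to $\frac{\delta M}{\delta m}=1$ and $\frac{\delta E}{\delta m}=\phi$---is exactly the route the paper takes. Your Casimir-based Lyapunov functional $\mathcal{V}$ with $\mathcal{V}''(\mu)=(2a)^{-1}\mathcal{K}$ is a correct and elegant reformulation (the paper derives $\mathcal{K}$ more directly in Lemma \ref{lem-equivalency} and then invokes the criterion of Proposition \ref{prop-Natali}), and your construction of the $C^1$ family via monotonicity of the period in $b$ coincides with Lemma \ref{lemma-fixed-period}.

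There is, however, a genuine gap at precisely the step you flag as ``the main obstacle.'' Differentiating along $b=\mathcal{B}_L(a)$ gives you only \emph{one} preimage, $\mathcal{K}^{-1}(c\cdot 1-\phi)=\partial_a\mu$. To compute the $2\times 2$ matrix $P_{ij}=\langle\mathcal{K}^{-1}v_i,v_j\rangle$ with $v_1=1$, $v_2=\phi$ whose signature determines $n(\mathcal{K}|_{Y_0})$ via Proposition \ref{prop-Pel}, you need $\mathcal{K}^{-1}1$ and $\mathcal{K}^{-1}\phi$ \emph{separately}; one linear combination is not enough. The paper supplies the missing direction by differentiating $(c-\phi)^3\mu=a(c-\phi)$ also in the wave speed $c$ (the family $\Phi_L$ depends smoothly on $c$ via the scaling (\ref{scal-transform})), obtaining $\mathcal{K}\partial_c\mu=-2a$ and hence $\mathcal{K}^{-1}1=-\frac{1}{2a}\partial_c\mu$. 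Then $\det(P)=\frac{1}{2a}(\partial_c\mathcal{M}_L\,\partial_a\mathcal{E}_L-\partial_a\mathcal{M}_L\,\partial_c\mathcal{E}_L)$, and it is the homogeneity $\mathcal{M}_L=c\hat{\mathcal{M}}_L(\alpha)$, $\mathcal{E}_L=c^2\hat{\mathcal{E}}_L(\alpha)$ with $\alpha=a/c^3$ that collapses this Jacobian to a multiple of $\frac{d}{d\alpha}\bigl(\hat{\mathcal{E}}_L/\hat{\mathcal{M}}_L^2\bigr)$. In other words, the exponent $2$ on $\mathcal{M}_L$ comes not from the Casimir or from (\ref{quadra}) but from the scaling degrees of $M$ and $E$ under (\ref{scal-transform}); without bringing in the $c$-derivative your proposed algebraic reduction cannot be carried out.
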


\begin{remark}
	\label{remark-existence}
	In comparison with Theorem \ref{theorem-existence}, it follows from the  results in \cite{GV} that 
	the period of the periodic solutions of the system (\ref{CHode}), (\ref{second-order}), and (\ref{quadra})
		\begin{itemize}
		\item is monotonically increasing in $a$ if $b \in (-\frac{1}{2} c^2,-(1 - \frac{\sqrt{2}}{\sqrt{3}}) c^2]$;
		\item has a single maximum point in $a$ if $b \in (-(1 - \frac{\sqrt{2}}{\sqrt{3}}) c^2,0)$; 
		\item is monotonically decreasing in $a$ if $b \in [0,\frac{1}{6} c^2)$.
	\end{itemize}
We will show in Theorem \ref{theolinop} that $\mathcal{L}$ has two simple negative eigenvalues if the period is increasing in $a$ and one simple negative eigenvalue if the period is decreasing in $a$, in addition to the simple zero eigenvalue in $L^2_{\rm per}$.
\end{remark}

\begin{remark}
	\label{remark-spectrum}
	The spectrum of $\mathcal{L}$ in $L^2_{\rm per}$ is purely discrete, whereas the spectrum of $\mathcal{K}$ in $L^2_{\rm per}$ includes both continuous and discrete parts, see Lemma \ref{lemma-L-K}. However, the continuous part of $\mathcal{K}$ is strictly positive and does not contribute to the count of negative eigenvalues. 
\end{remark}

\begin{remark}
	\label{remark-stability}
	In the context of Theorem \ref{theorem-stability}, we show numerically that the stability criterion (\ref{stability-criterion}) is satisfied for every periodic solution of Theorem \ref{theorem-existence}, see Figure \ref{fig-dependence}.
\end{remark}

\begin{remark}
	\label{remark-criterion}
	We also show in Lemma \ref{lemma-b-neg} and Remark \ref{remark-boundaries} without appealing to the numerical verification of the stability criterion (\ref{stability-criterion}) that the smooth periodic travelling waves of Theorem \ref{theorem-existence} are orbitally 
	stable for $b \leq 0$ and also in a neighborhood of the boundary $a = a_-(b)$ where the solutions are constant.
\end{remark}

\begin{remark}
	\label{remark-scaling}
	The following transformation 
	\begin{equation}
	\label{scal-transform}
	\phi(x) = c \varphi(x), \quad b = c^2 \beta, \quad a = c^3 \alpha
	\end{equation}
	normalizes the parameter $c$ to unity, so that $\varphi$, $\beta$, and $\alpha$ satisfy the same system  (\ref{CHode}), (\ref{second-order}), and (\ref{quadra})  but with $c = 1$. Hence, the smooth periodic waves are uniquely determined by the free parameters $(a,b)$ 
	and $c = 1$ can be used everywhere. For clarity of presentation, 
we will keep the parameter $c$ in all equations until Lemma \ref{lem-positivity},
where we will use the scaling transformation (\ref{scal-transform}).
\end{remark}

\begin{remark}
	\label{remark-positivity}
We only consider the case of right-propagating waves with $c > 0$; 
however, all results are extended to the left-propagating waves with $c < 0$ 
by simply flipping the signs in the scaling transformation (\ref{scal-transform}).
\end{remark}

The paper is organized as follows. In Section \ref{sec-2}, we study the existence of the smooth periodic solutions and give the proof of 
the first two assertions in Theorem \ref{theorem-existence}.
In Section \ref{sec-3}, we study the eigenvalues of the linearized 
operators $\mathcal{L}$ and $\mathcal{K}$ and give the proof 
of the last assertion of Theorem \ref{theorem-existence}. 
In parallel, we also show the assertions in Remarks \ref{remark-existence} 
and \ref{remark-spectrum}. 
In Section \ref{sec-4}, we study the linearized evolution 
under two constraints and give the proof of spectral stability 
in Theorem \ref{theorem-stability} and Remark \ref{remark-stability}.
In Section \ref{sec-5}, we prove the orbital stability 
in Theorem \ref{theorem-stability} 
and also obtain orbital stability directly for $b \leq 0$ as in Remark \ref{remark-criterion}. 
Section \ref{sec-6} concludes the paper with 
a summary and a discussion of open directions. 
Appendix A describes the approach used for numerical approximations 
of the smooth periodic waves.

\section{Existence of smooth periodic traveling waves}
\label{sec-2}

Here we study existence of smooth periodic solutions of the system (\ref{CHode}), (\ref{second-order}), and (\ref{quadra}) and 
provide the proof of the first two assertions of Theorem \ref{theorem-existence}. 

Let us rewrite (\ref{quadra}) as the total energy $b$ of Newton's particle of unit mass with the coordinate $\phi$ in ``time" $x$ with the potential energy $U(\phi)$:
\begin{equation}
\label{first-order}
b = \frac{1}{2} \left( \frac{d\phi}{dx} \right)^2 + U(\phi), \qquad 
U(\phi) = -\frac{1}{2} \phi^2 + \frac{a}{c - \phi}
\end{equation}
Critical points of $U$ are given by roots of the cubic equation $a = \phi (c-\phi)^2$. The local maximum of $\phi \mapsto \phi (c - \phi)^2$ 
occurs at $\phi = \frac{c}{3}$, from which we define $a_c := \frac{4c^3}{27}$. For $a \in (-\infty,0) \cup (a_c,\infty)$, there exists only one critical (maximum) point of $U$, whereas for $a \in (0,a_c)$ there exist three critical points of $U$, two are local maximum and one is local minimum. In addition, $\phi = c$ is the pole singularity of $U(\phi)$ if $a \neq 0$. See Figure \ref{fig-1} for illustration of the three different cases of $U$.

It follows from dynamics of the Newton particle with the total energy in (\ref{first-order}) that all smooth mappings $x \mapsto \phi(x)$ for $a \in (-\infty,0] \cup [a_c,\infty)$ are unbounded. Although peaked and cusped periodic solutions exist in this case \cite{Len3,MP20}, we are only concerned with the smooth periodic solutions here. 

For $a \in (0,a_c)$, we shall label the critical points of $U$ 
as $\phi_1 < \phi_2 < \phi_3$. The following order is obtained from the graphical analysis of the cubic equation $a = \phi (c - \phi)^2$ on Fig. \ref{fig-1} (middle):
\begin{equation}
\label{ordering}
0 < \phi_1 < \frac{c}{3} < \phi_2 < c < \phi_3.
\end{equation}
The local minimum of $U$ at $\phi_2$ gives the center of the second-order equation (\ref{second-order}) at $(\phi_2,0)$. This implies that the smooth periodic solutions form a {\em period annulus}, that is, a punctured neighbourhood of the center $(\phi_2,0)$ enclosed by the homoclinic orbit connecting the saddle $(\phi_1,0)$. The phase portrait $(\phi,\phi')$ with the period annulus around the center $(\phi_2,0)$ is illustrated on Figure \ref{fig-plane}.

\begin{figure}[htb!]
	\includegraphics[width=0.32\textwidth]{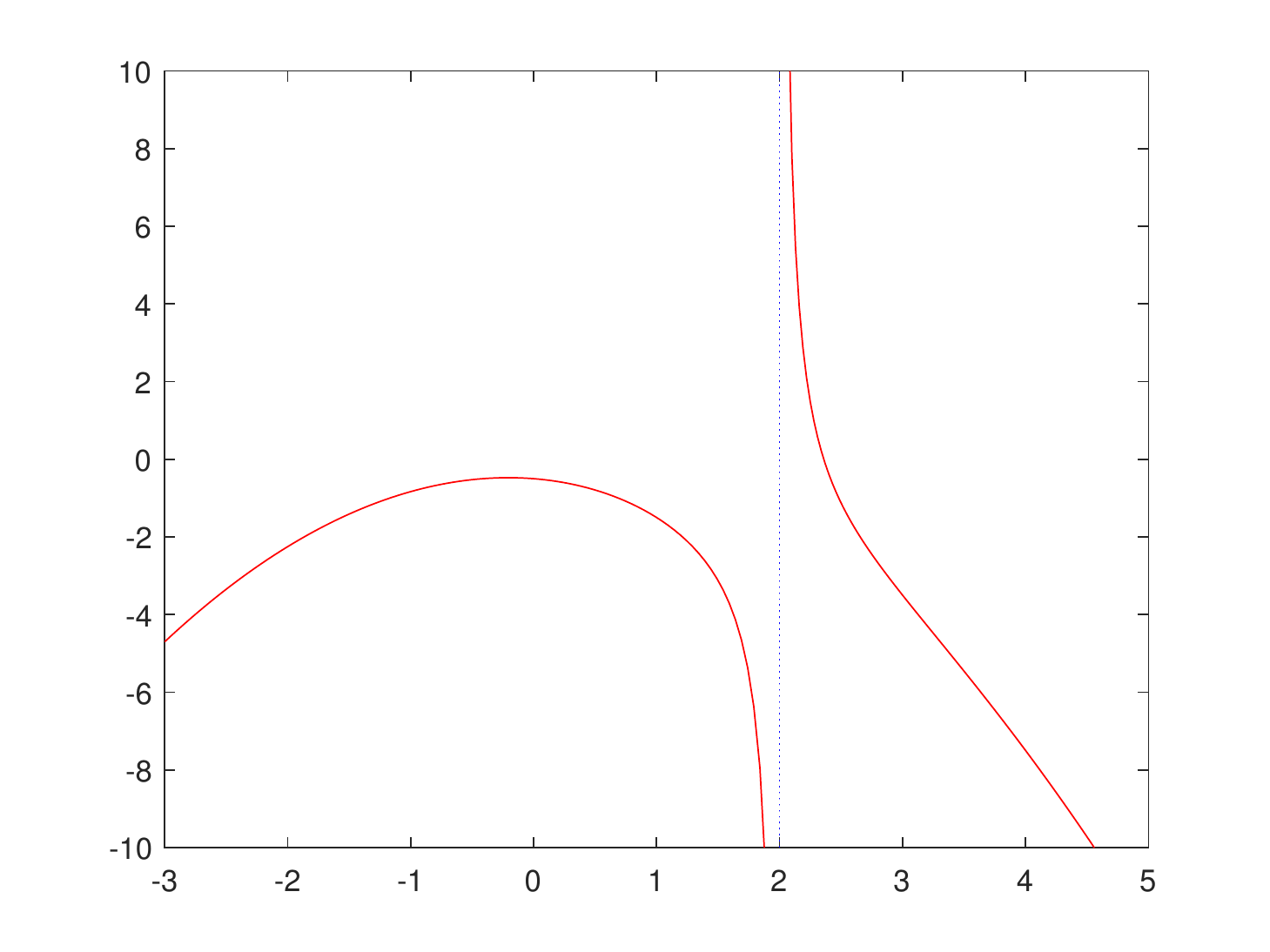}
	\includegraphics[width=0.32\textwidth]{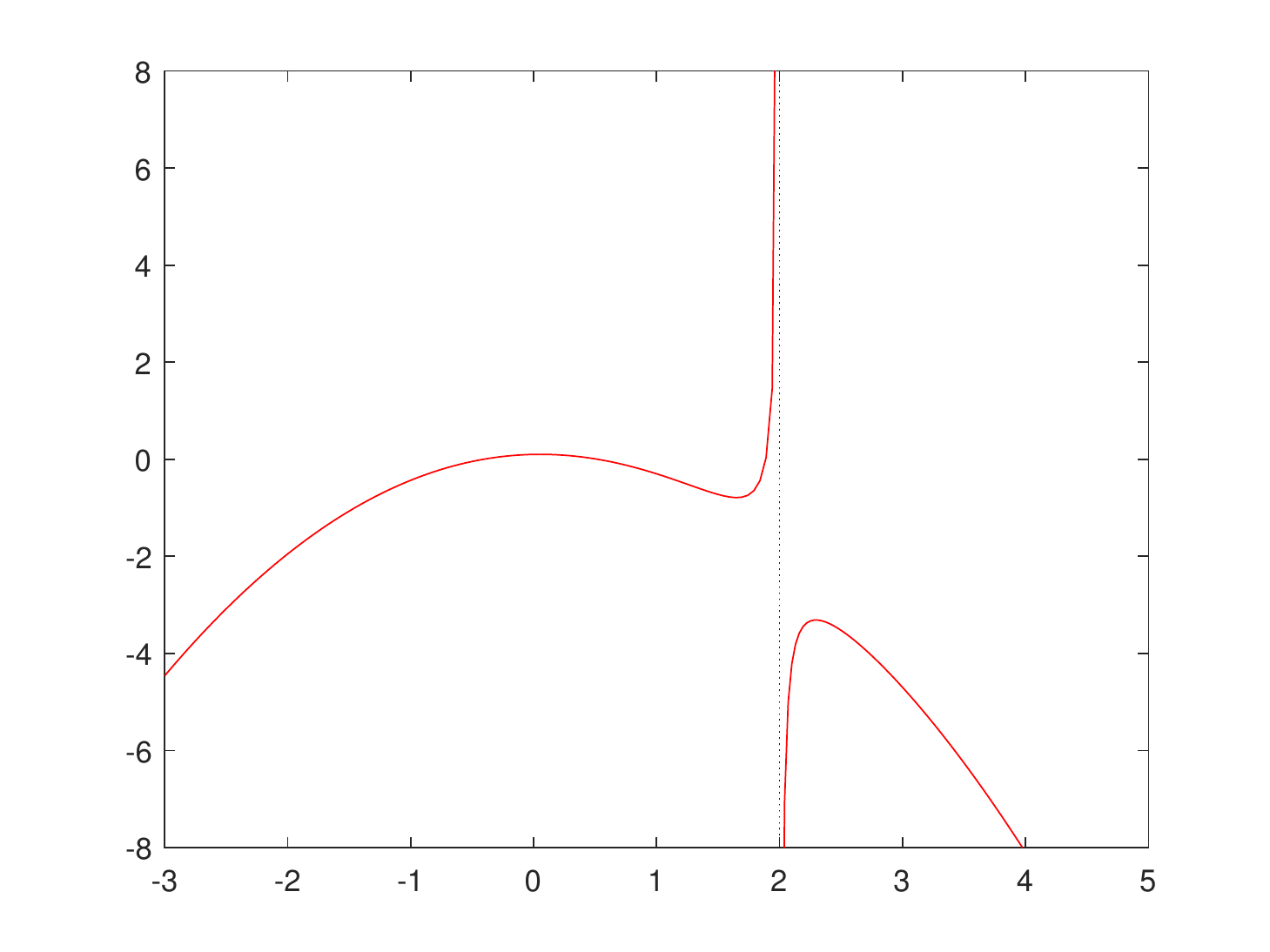}
	\includegraphics[width=0.32\textwidth]{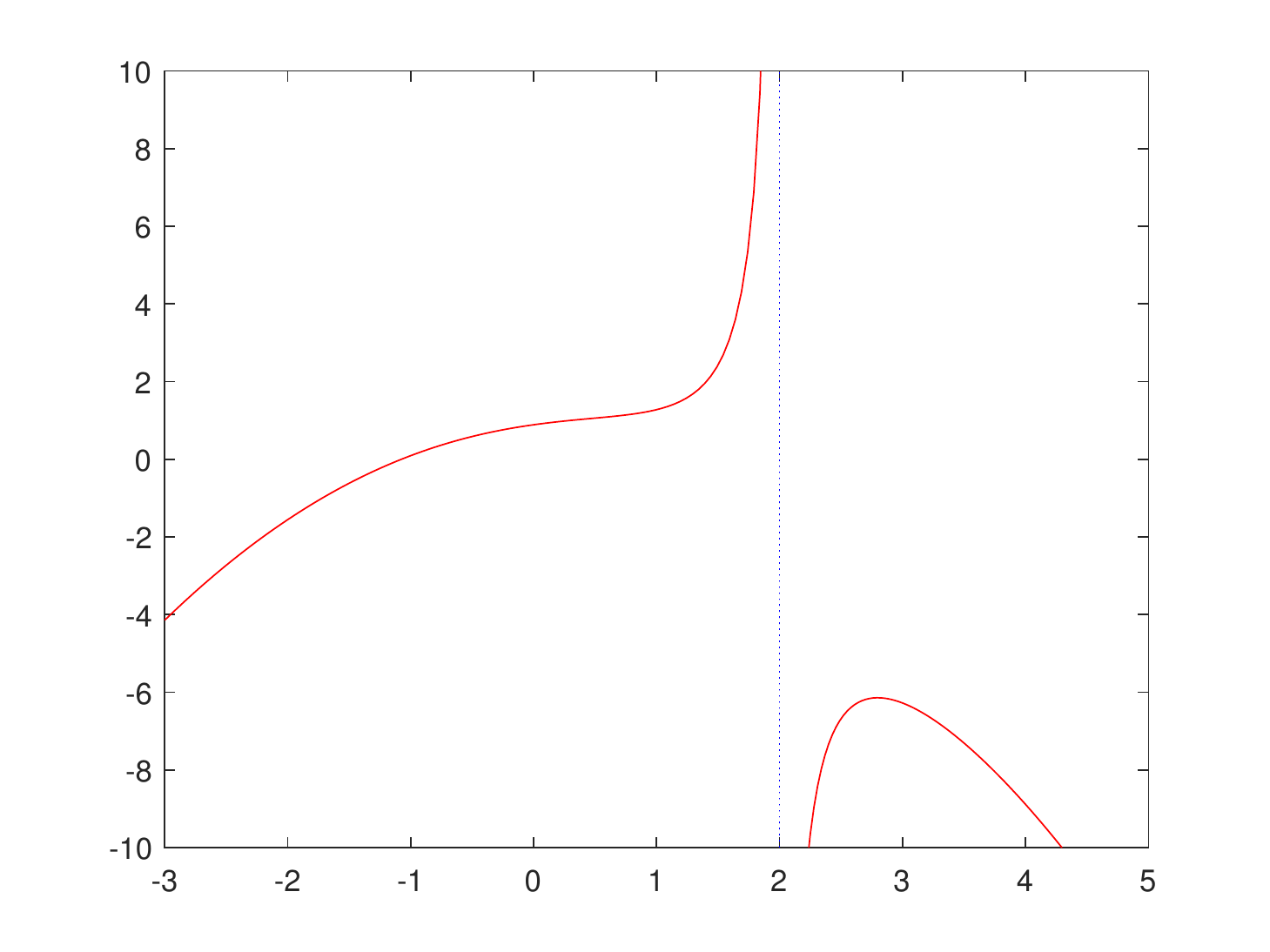}
	\caption{The graph of $U$ versus $\phi$ in (\ref{first-order}) for $a \in (-\infty,0)$ (left), $a \in (0,a_c)$ (middle), and $a \in (a_c,\infty)$ (right) for $c = 2$.} \label{fig-1}
\end{figure}

\begin{figure}[htb!]
	\includegraphics[width=0.6\textwidth]{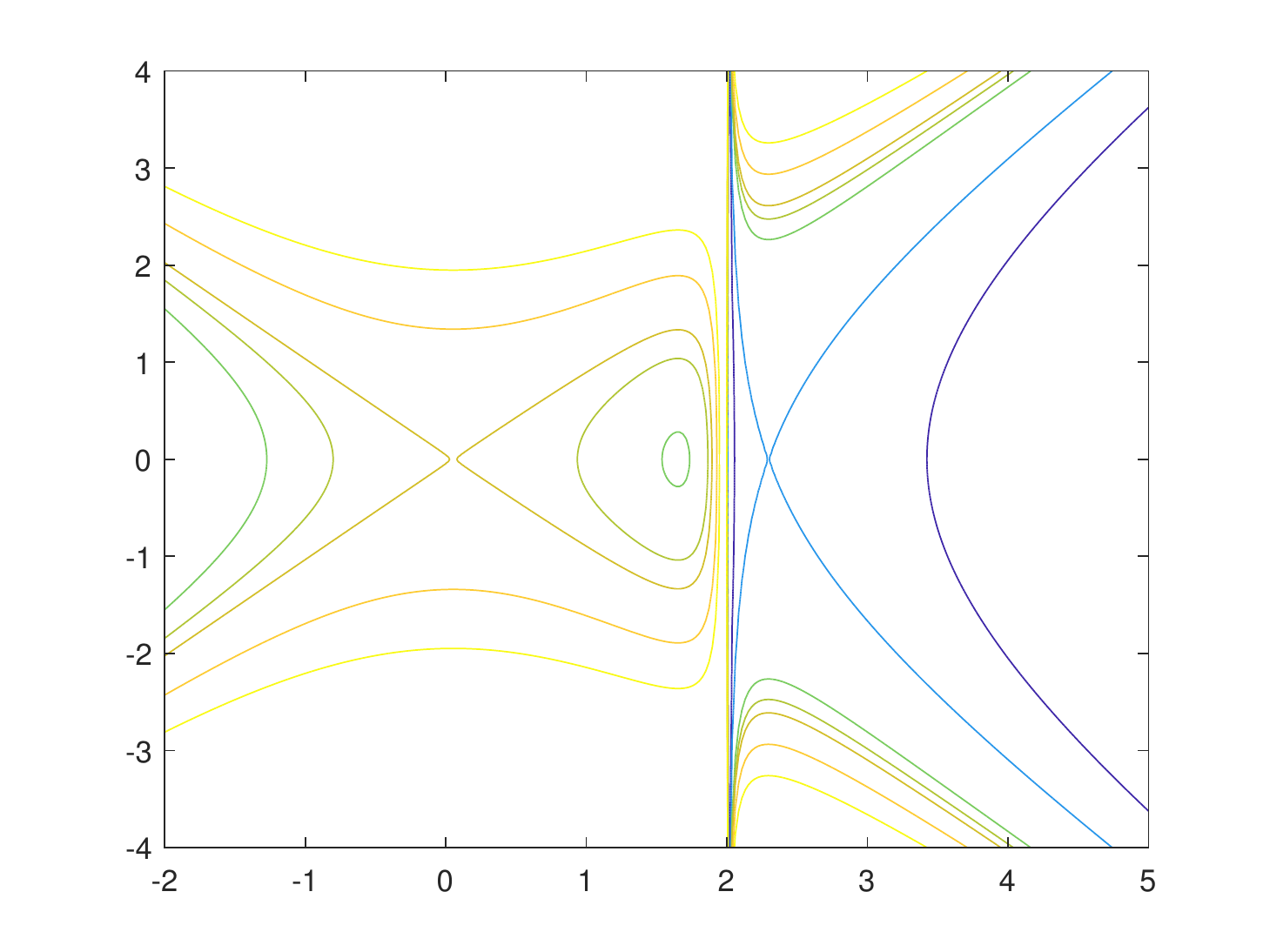}
	\caption{Phase portrait of the second-order equation 
		(\ref{second-order}) constructed from the level curves of the first-order invariant 
		(\ref{first-order}) for $a = 0.4$ and $c = 2$.} \label{fig-plane}
\end{figure}

The smooth periodic solutions for fixed $c > 0$ and $a \in (0,a_c)$ are parameterized by the parameter $b$ in $(b_-,b_+)$, where $b_- = U(\phi_2)$ and $b_+ = U(\phi_1)$. The following result summarizes the existence of smooth periodic solutions.

\begin{lemma}
	\label{period-to-energy}
	Fix $c > 0$. For a fixed $a \in (0,a_c)$ with $a_c := \frac{4c^3}{27}$, 
there exists a family of smooth periodic solutions $\phi$ of the second-order 
equation (\ref{second-order}) closed with the first-order invariant (\ref{first-order}) parameterized by $b \in (b_-,b_+)$, where 
$b_- = U(\phi_2)$ and $b_+ = U(\phi_1)$, such that $\phi \in (0,c)$. 
The solution $\phi$ is smooth with respect to parameters $a$, $b$, and $c$.
\end{lemma}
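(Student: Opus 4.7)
The plan is to treat the first-order invariant (\ref{first-order}) as the conservation of energy for a one-dimensional Newtonian system with smooth potential $U(\phi) = -\tfrac{1}{2}\phi^2 + a/(c-\phi)$ on the strip $\{\phi < c\}$ and to extract the $b$-family of periodic orbits by a standard phase-plane analysis. Setting $H(\phi,\psi) := \tfrac{1}{2}\psi^2 + U(\phi)$, the critical points of $H$ are the points $(\phi_j,0)$ with $U'(\phi_j) = 0$, so for $a \in (0,a_c)$ there is exactly one center at $(\phi_2,0)$ with $U(\phi_2) = b_-$ and exactly one saddle at $(\phi_1,0)$ with $U(\phi_1) = b_+$ inside the strip, consistent with the ordering (\ref{ordering}) recorded above.

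For any fixed $b \in (b_-, b_+)$, the equation $U(\phi) = b$ reduces, after clearing the denominator, to a cubic in $\phi$; sign analysis on $(\phi_1, c)$ together with $U(\phi) \to +\infty$ as $\phi \to c^-$ forces exactly two simple roots $\phi_-(a,b,c) < \phi_+(a,b,c)$ to lie in $(\phi_1, c)$, one on each side of $\phi_2$, with $U(\phi) < b$ strictly between them. Since $U'(\phi_\pm) \neq 0$, the implicit function theorem makes $\phi_\pm$ smooth functions of $(a,b,c)$ in the open parameter region, and the level set $\{H = b\}$ then contains the closed smooth Jordan curve $\psi = \pm \sqrt{2(b-U(\phi))}$ around $(\phi_2,0)$, uniformly bounded away from the pole at $\phi = c$.

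I would then invoke smoothness of the Hamiltonian vector field $\phi' = \psi$, $\psi' = -U'(\phi)$ on a neighbourhood of this curve to produce a smooth solution $\phi$ that traverses the orbit; the period integral $T = 2\int_{\phi_-}^{\phi_+} d\phi/\sqrt{2(b-U(\phi))}$ converges because the endpoints are simple turning points. Smooth dependence on $(a,b,c)$ follows from smooth dependence of ODE solutions on parameters and initial data, applied with $\phi(0) = \phi_-(a,b,c)$ and $\phi'(0) = 0$. That such $\phi$ solves both (\ref{CHode}) and (\ref{second-order}) is the three-way equivalence already recorded just after (\ref{quadra}). The only mild subtlety — essentially the single point that needs care — is to verify that the period annulus around $(\phi_2,0)$ extends all the way up to the saddle value $b_+$ without being truncated by a collision with the pole $\phi = c$ or by the disappearance of a turning point; this is resolved precisely by $U(\phi) \to +\infty$ as $\phi \to c^-$, which keeps $\phi_+$ strictly below $c$ throughout, while $\phi_-$ merges into $\phi_1$ only in the limit $b \to b_+^-$, closing the family at the homoclinic separatrix of the saddle.
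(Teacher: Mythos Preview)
Your proposal is correct and follows essentially the same phase-plane approach as the paper: both identify the level sets of the Newtonian energy $H(\phi,\psi)=\tfrac12\psi^2+U(\phi)$ for $b\in(b_-,b_+)$ with the periodic orbits in the annulus around the center $(\phi_2,0)$, and both deduce $\phi\in(\phi_1,c)\subset(0,c)$ from the ordering (\ref{ordering}) and the pole of $U$ at $c$. The paper's proof is considerably terser---it simply asserts the existence of the period annulus and the smoothness in parameters---whereas you spell out the simple-root structure of $U(\phi)=b$, the implicit-function argument for $\phi_\pm$, and the fact that the annulus is not truncated before reaching the saddle level; these are exactly the details the paper leaves implicit in the surrounding discussion and figures.
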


\begin{proof}
	Every periodic solution $\phi$ of the second-order equation (\ref{second-order}) corresponds to a periodic orbit of the planar system with the first integral given by (\ref{first-order}). Its level set parametrized by $b\in(b_-,b_+)$ defines the periodic orbits inside the period annulus around the center $(\phi_2,0)$, which exists if $a\in(0,a_c)$ as shown above. Due to the ordering (\ref{ordering}), the periodic solutions satisfy $\phi \in (\phi_1,c)$ which implies that $\phi \in (0,c)$. Since 
	the first-order invariant (\ref{first-order}) is smooth with respect to 
	parameters $a$, $b$, and $c$, the periodic orbits 
	inside the period annulus are also smooth with respect to parameters.
\end{proof}

Let us now define the period function $\mathfrak{L}(a,b,c)$ for the smooth $L$-periodic solutions of Lemma \ref{period-to-energy}. For fixed $a \in (0,a_c)$, 
$b \in (b_-,b_+)$, and $c > 0$, let $\phi_+$ and $\phi_-$ be turning points of the Newton's particle satisfying the ordering
\begin{equation}
\label{ordering-turning-points}
0 < \phi_1 < \phi_- < \phi_2 < \phi_+ < c < \phi_3.
\end{equation}
The turning points are roots of the algebraic equation 
\begin{equation}
\label{roots-turning-points}
(c - \phi_{\pm}) (2 b + \phi_{\pm}^2) = 2a.
\end{equation}
Without loss of generality, we place the maximum of $\phi$ at $x = 0$ and the minimum of $\phi$ at $x = \pm L/2$ so that $\phi(0) = \phi_+$ and $\phi(\pm L/2) = \phi_-$. Since the extremal values of $\phi$ are non-degenerate if $b \in (b_-,b_+)$, then $\phi''(0) < 0$ and $\phi''(\pm L/2) > 0$. It follows from (\ref{roots-turning-points}) that 
\begin{eqnarray}
\label{eq-1}
&&	(c \phi_{\pm} - \frac{3}{2} \phi_{\pm}^2 - b) \partial_a \phi_{\pm} = 1, \\
&&	(c \phi_{\pm} - \frac{3}{2} \phi_{\pm}^2 - b) \partial_b \phi_{\pm} = -(c-\phi_{\pm})
\label{eq-2}
\end{eqnarray}
and since $c - \phi_{\pm} > 0$ and
\begin{eqnarray}
\label{eq-3}
\phi''(0) = \frac{c \phi_+ -\frac{3}{2} \phi_+^2 - b}{c-\phi_+} < 0, \quad 
\phi''(\pm L/2) = \frac{c \phi_- -\frac{3}{2} \phi_-^2 - b}{c-\phi_-} > 0,
\end{eqnarray}
we have $\partial_a \phi_{\pm}, \partial_b \phi_{\pm} \neq 0$ with 
\begin{equation}
\label{opposite-signs}
{\rm sign}(\partial_a \phi_{\pm}) = -{\rm sign}(\partial_b \phi_{\pm}).
\end{equation}
The period function $\mathfrak{L}(a,b,c)$ is defined by integrating the quadrature (\ref{first-order})
\begin{equation}
\label{period-L-function}
\mathfrak{L}(a,b,c) := \int_{\phi_-}^{\phi_+} \frac{2\sqrt{c-\phi} d\phi}{\sqrt{(c-\phi)(2b + \phi^2) - 2a}}.
\end{equation}

Figure \ref{fig-domain} shows the existence region 
of the smooth periodic solutions on the $(a,b)$ plane 
for a fixed $c > 0$. Roots of the cubic equation $a = \phi (c-\phi)^2$ satisfying the ordering (\ref{ordering}) are computed 
numerically for every $a \in (0,a_c)$, from which we compute 
the values $b_- = U(\phi_2)$ and $b_+ = U(\phi_1)$. 
Plotting $b_-$ and $b_+$ versus $a$ gives the existence region 
enclosed by three boundaries. 

\begin{figure}[htb!]
	\includegraphics[width=0.6\textwidth]{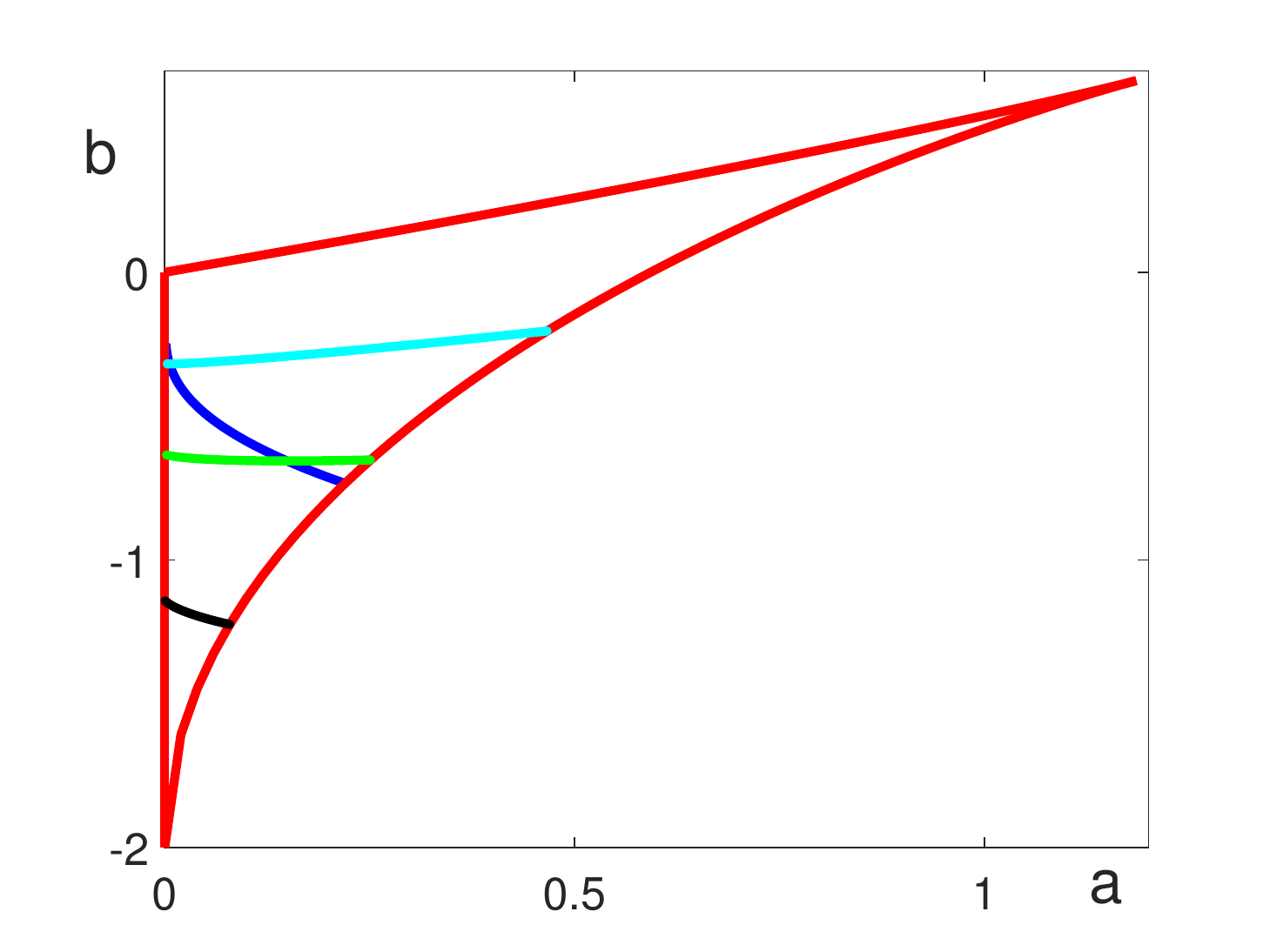}
	\caption{The existence region of smooth $L$-periodic solutions 
		on the parameter plane $(a,b)$ for $c = 2$ enclosed by three boundaries (red lines). The blue line 
		shows the values of $(a,b)$ for which the period function $\mathfrak{L}(a,b,c)$ has a maximum point in $a$ at fixed $(b,c)$.
		The black, green, and cyan lines show curves of fixed period 
		$L = \pi/2$, $L = 3\pi/4$, and $L = \pi$, respectively.} \label{fig-domain}
\end{figure}

The following three lemma clarify 
how the smooth periodic solutions transform when 
$(a,b)$ approach each boundary for a fixed $c > 0$.

\begin{lemma}
	\label{remark-right}
Fix $c > 0$ and $a \in (0,a_c)$. 
The smooth periodic solutions of Lemma \ref{period-to-energy}
transform as $b \to b_-(a)$ to the constant solutions. The limiting period function 
$$
\mathfrak{L}_-(a) := \mathfrak{L}(a,b_-(a),c)
$$ 
satisfies $\mathfrak{L}_-'(a) > 0$
with $\mathfrak{L}_-(a) \to 0$ as $a \to 0$ and $\mathfrak{L}_-(a) \to \infty$ as $a \to a_c$. The mapping $a \mapsto b_-(a)$ is $C^1$ and invertible with the inverse $a = a_-(b)$ for $b \in (-\frac{1}{2} c^2, \frac{1}{6} c^2)$.
\end{lemma}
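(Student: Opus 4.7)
The plan is to reduce everything to the analysis of the center $\phi_2(a)$ of the Newton potential $U$ in \eqref{first-order}. Since $\phi_2 \in (c/3, c)$ is characterized by $a = \phi_2 (c-\phi_2)^2$ and the derivative $(c-\phi_2)(c-3\phi_2)$ of the right-hand side is strictly negative on this interval, the implicit function theorem yields that $a \mapsto \phi_2(a)$ is a smooth diffeomorphism from $(0,a_c)$ onto $(c/3, c)$ with
$$\phi_2'(a) = \frac{1}{(c-\phi_2)(c-3\phi_2)} < 0.$$
The boundary values $\phi_2(0^+) = c$ and $\phi_2(a_c^-) = c/3$ follow from the graphical analysis of the cubic already carried out above.

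Next I would compute $b_-(a) = U(\phi_2(a))$. Substituting $a/(c-\phi_2) = \phi_2(c-\phi_2)$ into $U$ collapses the expression to
$$b_-(a) = c\phi_2 - \tfrac{3}{2}\phi_2^2,$$
whence $b_-'(a) = (c - 3\phi_2)\,\phi_2'(a)$. Both factors are strictly negative, so $b_-'(a) > 0$, and direct evaluation at the endpoints gives $b_-(0^+) = -\tfrac{1}{2}c^2$ and $b_-(a_c^-) = \tfrac{1}{6}c^2$. Hence $b_-$ is a $C^1$ strictly increasing bijection from $(0,a_c)$ onto $(-\tfrac{1}{2}c^2, \tfrac{1}{6}c^2)$, and the inverse $a = a_-(b)$ is $C^1$ on that interval.

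For the limiting period I would use the fact that as $b \to b_-(a)$ the two turning points $\phi_\pm$ determined by \eqref{roots-turning-points} merge at $\phi_2$, so the periodic orbit shrinks to the center, producing the constant solution $\phi \equiv \phi_2$ (which indeed solves \eqref{second-order} by definition of $\phi_2$). The linearization of Newton's equation $\phi'' = -U'(\phi)$ at $\phi_2$ is a harmonic oscillator with frequency $\sqrt{U''(\phi_2)}$, and a direct computation using $a = \phi_2(c-\phi_2)^2$ gives
$$U''(\phi_2) = -1 + \frac{2a}{(c-\phi_2)^3} = \frac{3\phi_2 - c}{c - \phi_2} > 0,$$
since $\phi_2 > c/3$. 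Therefore
$$\mathfrak{L}_-(a) \;=\; \frac{2\pi}{\sqrt{U''(\phi_2(a))}} \;=\; 2\pi \sqrt{\frac{c - \phi_2(a)}{3\phi_2(a) - c}}.$$
Differentiating the radicand yields $-2c\,\phi_2'(a)/(3\phi_2 - c)^2$, which is strictly positive since $\phi_2' < 0$; this proves $\mathfrak{L}_-'(a) > 0$. The endpoint limits $\mathfrak{L}_-(a) \to 0$ as $a \to 0$ and $\mathfrak{L}_-(a) \to \infty$ as $a \to a_c$ are then immediate from $c-\phi_2 \to 0$ and $3\phi_2 - c \to 0$ respectively.

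The one step that is not pure algebra is the passage to the limit $b \to b_-(a)$ in the integral formula \eqref{period-L-function}, i.e.\ the justification that the full period of the Hamiltonian oscillator converges to the linearized period at the non-degenerate center. This is standard: applying Morse's lemma to put $U$ in quadratic normal form near $\phi_2$, or equivalently rescaling $\phi - \phi_2 = O(\sqrt{b - b_-})$ and invoking dominated convergence on the resulting integral, reduces $\mathfrak{L}(a,b,c)$ to $2\pi/\sqrt{U''(\phi_2)}$ plus a correction of order $b - b_-(a)$. This is the only analytic point beyond the implicit function theorem, and it relies only on the strict positivity of $U''(\phi_2)$ already established above.
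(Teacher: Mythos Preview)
Your proof is correct and follows essentially the same route as the paper: both parametrize the boundary curve by the center $\phi_2\in(c/3,c)$, derive the identities $a=\phi_2(c-\phi_2)^2$, $b_-=c\phi_2-\tfrac32\phi_2^2$, and $\omega^2=U''(\phi_2)=(3\phi_2-c)/(c-\phi_2)$, and read off the monotonicity and endpoint limits from these. The only stylistic difference is that the paper passes through $b$ as an intermediate variable (inverting the relation to write $b$ explicitly in terms of $\mathfrak{L}_-$ and then using $da/db=c-\phi_2>0$), whereas you differentiate $\mathfrak{L}_-$ directly in $a$ via the chain rule through $\phi_2'(a)$; your version is marginally more direct, and you are also more explicit than the paper about why the full period converges to the linearized one at the non-degenerate center.
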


\begin{proof}
	It follows from the ordering (\ref{ordering-turning-points}) that the boundary $b = b_-(a)$ corresponds to the center $\phi_- = \phi_+ = \phi_2$. Hence, $\phi(x) = \phi_2$ is constant in $x$. Linearization of the second-order equation (\ref{second-order}) at the center point $(\phi_2,0)$ determines the period $\mathfrak{L}_-(a) := \mathfrak{L}(a,b_-(a),c)$ in the form:
\begin{equation}
\label{period-boundary}
\mathfrak{L}_-(a) = \frac{2\pi}{\omega}, \quad \omega = \sqrt{\frac{2a}{(c-\phi_2)^3} - 1}.
\end{equation}
Along the curve $b = b_-(a)$, $a$ and $b$ can be parametrized by $\phi_2$ as
\begin{equation}
\label{param-boundary}
\left\{ \begin{array}{l} 
b = c \phi_2 - \frac{3}{2} \phi_2^2,\\
a = \phi_2 (c - \phi_2)^2,
\end{array} \right.
\end{equation}
which follow from equations (\ref{CHode}) and (\ref{second-order}) 
using that $\phi = \phi_2$ is constant.

Solving the first (quadratic) equation in (\ref{param-boundary}) for $\phi_2$ as 
$$
\phi_2 = \frac{c}{3} + \frac{\sqrt{c^2 - 6b}}{3}
$$
and substituting the second (cubic) equation  in (\ref{param-boundary}) for $a$ into (\ref{period-boundary}) yields 
$$
\omega^2 = \frac{2a}{(c-\phi_2)^3} - 1 =
\frac{3 \phi_2 - c}{c - \phi_2} = \frac{3 \sqrt{c^2 - 6b}}{2c - \sqrt{c^2 - 6b}}.
$$
This allows us to express $b$ explicitly in terms of $\mathfrak{L}_-(a)$ by 
\begin{equation}
\label{period-to-b-boundary}
b = \frac{c^2}{6} \left[ 1 - \frac{64 \pi^4}{(4 \pi^2 + 3 \mathfrak{L}_-(a)^2)^2} \right].
\end{equation}
It follows from (\ref{period-to-b-boundary}) that $\mathfrak{L}_-(a)$ increases in $b$ along the curve $b = b_-(a)$ and satisfies $\mathfrak{L}_-(a) \to 0$ as $b \to -\frac{1}{2} c^2$ 
(or equivalently, $a \to 0$) and $\mathfrak{L}_-(a) \to \infty$ as $b \to \frac{1}{6} c^2$ 
(or equivalently, $a \to a_c$). 
Since the parametrization (\ref{param-boundary}) implies that 
\begin{equation}
\label{argument-a-b}
\frac{db}{d\phi_2} = c - 3 \phi_2, \quad \frac{da}{d \phi_2} = (c - \phi_2) (c - 3 \phi_2) \quad \Rightarrow \quad \frac{da}{db} = c - \phi_2 
\end{equation}
and $\phi_2 < c$, the mapping $a \mapsto b_-(a)$ is $C^1$, invertible, and monotonically increasing from $(a,b) = (0,-\frac{1}{2}c^2)$ to $(a,b) = (\frac{4}{27}c^3, \frac{1}{6} c^2)$. Hence $\mathfrak{L}_-(a)$ is also increasing in $a$ along the curve $b = b_-(a)$. 
\end{proof}

\begin{lemma}
	\label{remark-upper}
	Fix $c > 0$ and $a \in (0,a_c)$. The smooth periodic solutions of Lemma \ref{period-to-energy} transform as $b \to b_+(a)$ to the solitary wave solutions with 
$$\mathfrak{L}_+(a) := \mathfrak{L}(a,b_+(a),c) = \infty.$$
	The mapping $a \mapsto b_+(a)$ is $C^1$ and invertible with the inverse $a = a_+(b)$ for $b \in (0, \frac{1}{6} c^2)$.
\end{lemma}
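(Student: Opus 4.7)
The plan is to mirror the proof of Lemma \ref{remark-right}, but with the role of the center $\phi_2$ played by the saddle $\phi_1$. First, I would note that the boundary $b = b_+(a)$ is characterized by $b = U(\phi_1)$, i.e.\ the energy level in (\ref{first-order}) coincides with the local maximum value of $U$. As $b \to b_+(a)^-$, the lower turning point $\phi_- \in (\phi_1, \phi_2)$ necessarily approaches $\phi_1$ (by continuity of (\ref{roots-turning-points}) and the non-vanishing derivatives in (\ref{eq-2})--(\ref{eq-3})), while $\phi_+$ tends to the unique $\phi_+^* \in (\phi_2, c)$ solving $U(\phi_+^*) = U(\phi_1)$. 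The limiting orbit in the $(\phi,\phi')$-plane is therefore the homoclinic trajectory based at the saddle $(\phi_1, 0)$, which is precisely a smooth solitary wave solution of (\ref{second-order}).

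Next, to show $\mathfrak{L}_+(a) = \infty$, I would perform an asymptotic analysis at the saddle. Since $U'(\phi_1) = 0$ and $U''(\phi_1) < 0$ at the local maximum, a Taylor expansion yields
\begin{equation*}
b_+(a) - U(\phi) = -\frac{1}{2} U''(\phi_1)(\phi - \phi_1)^2 + O((\phi - \phi_1)^3).
\end{equation*}
Using the identity $(c-\phi)(2b + \phi^2) - 2a = 2(c-\phi)(b - U(\phi))$, the denominator in the period integral (\ref{period-L-function}) at $b = b_+(a)$ behaves like $C|\phi - \phi_1|$ with some $C > 0$ as $\phi \to \phi_1^+$, while the numerator $2\sqrt{c - \phi}$ admits a positive limit. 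The resulting non-integrable singularity of order $|\phi - \phi_1|^{-1}$ forces the period integral to diverge.

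Finally, for the $C^1$ regularity and invertibility of $a \mapsto b_+(a)$, I would parametrize the curve by $\phi_1 \in (0, c/3)$. Since $\phi_1$ is a critical point of $U$ satisfying $a = \phi_1(c-\phi_1)^2$ and $b_+ = U(\phi_1) = c\phi_1 - \frac{3}{2}\phi_1^2$, the same computation as in (\ref{argument-a-b}) applies with $\phi_2$ replaced by $\phi_1$:
\begin{equation*}
\frac{da}{d\phi_1} = (c-\phi_1)(c - 3\phi_1) > 0, \qquad \frac{db_+}{d\phi_1} = c - 3\phi_1 > 0, \qquad \frac{db_+}{da} = \frac{1}{c - \phi_1} > 0,
\end{equation*}
for $\phi_1 \in (0, c/3)$. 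Hence $a \mapsto b_+(a)$ is a strictly increasing $C^1$ bijection from $(0, \frac{4}{27} c^3)$ onto its image, and as $\phi_1$ traverses $(0, c/3)$ the pair $(a, b_+)$ moves from $(0,0)$ to $(\frac{4}{27}c^3, \frac{1}{6}c^2)$, so the inverse $a = a_+(b)$ is $C^1$ on $(0, \frac{1}{6} c^2)$. The main technical point is the continuity argument passing from the family of periodic orbits to the limiting homoclinic; this is resolved by the implicit function theorem applied to (\ref{roots-turning-points}) away from the saddle, together with the direct limit $\phi_- \to \phi_1$ extracted from the asymptotic analysis above.
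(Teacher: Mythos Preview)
Your proposal is correct and follows essentially the same approach as the paper: identify the boundary $b=b_+(a)$ with the homoclinic orbit at the saddle $(\phi_1,0)$, and parametrize the curve by $\phi_1\in(0,c/3)$ to obtain the monotone $C^1$ bijection via the same derivative computation as in (\ref{argument-a-b}). The only difference is that you supply an explicit asymptotic analysis of the period integral to justify $\mathfrak{L}_+(a)=\infty$, whereas the paper simply invokes that the limiting orbit is homoclinic to a saddle; your extra detail is sound and, if anything, makes the argument more self-contained.
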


\begin{proof}
	It follows from ordering (\ref{ordering-turning-points}) that the boundary $b = b_+(a)$ corresponds to 
	$\phi_- = \phi_1$. Hence, $\phi(x)$ is the solitary wave solution satisfying $\phi(x) \to \phi_1$ as $x \to \pm \infty$ so that $\mathfrak{L}_+(a) := \mathfrak{L}(a,b_+(a),c) = \infty$. 
	Along the curve $b = b_+(a)$, $a$ and $b$ can be parametrized by $\phi_1$ as
	\begin{equation}
	\label{param-boundary-plus}
	\left\{ \begin{array}{l} 
	b = c \phi_1 - \frac{3}{2} \phi_1^2,\\
	a = \phi_1 (c - \phi_1)^2,
	\end{array} \right.
	\end{equation}
	which follow from equations (\ref{CHode}) and (\ref{second-order}) 
	using that $\phi = \phi_1$ is a constant solution if $b = U(\phi_1)$.
	By the same argument as in (\ref{argument-a-b}) but with $\phi_2$ replaced by $\phi_1$, the mapping $a \mapsto b_+(a)$ is $C^1$, invertible, and monotonically increasing from $(a,b) = (0,0)$ to $(a,b) = (\frac{4}{27}c^3, \frac{1}{6} c^2)$.
\end{proof}

\begin{lemma}
	\label{remark-left}
	Fix $c > 0$ and $b \in (-\frac{1}{2} c^2, 0)$.
The smooth periodic solutions of Lemma \ref{period-to-energy}
transform as $a \to 0$ to the peaked periodic solutions 
and the period function
$$
\mathfrak{L}_0(b) := \mathfrak{L}(0,b,c)
$$
satisfies $\mathfrak{L}_0'(b) > 0$ with $\mathfrak{L}_0(b) \to 0$ as $b \to -\frac{1}{2} c^2$ and $\mathfrak{L}_0(b) \to \infty$ as $b \to 0$.
\end{lemma}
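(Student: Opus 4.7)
The plan is to set $a=0$ directly in the first-order invariant (\ref{first-order}) and exploit the closed form available in that limit. With $a=0$ the potential $U(\phi) = -\frac{1}{2}\phi^2$ loses its pole at $\phi = c$, and the quadrature reduces to $(\phi')^2 = \phi^2 + 2b$. For $b \in (-\frac{1}{2}c^2,0)$ this forces the lower turning point to satisfy $\phi_-^2 = -2b$, i.e., $\phi_- \to \sqrt{-2b}$. The upper turning point equation $(c-\phi_+)(2b+\phi_+^2) = 2a$ from (\ref{roots-turning-points}) admits as $a \to 0$ either $\phi_+^2 \to -2b$ or $\phi_+ \to c$; since the double root of $a = \phi(c-\phi)^2$ lies at $\phi = c$ when $a=0$ and the ordering (\ref{ordering-turning-points}) forces $\phi_- < \phi_2 < \phi_+$ with $\phi_2 \to c$, the only admissible alternative is $\phi_+ \to c$.

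To characterize peakedness, I would note that at the crest $\phi(0) = c$ the first-order invariant gives $(\phi'(0^{\pm}))^2 = c^2 + 2b > 0$, so the one-sided slopes are $\pm\sqrt{c^2+2b}$ and have opposite signs by symmetry, producing a corner. Equivalently, the limiting profile is built by gluing together pieces of $\phi(x) = \sqrt{-2b}\cosh(x-x_0)$, which solve the linear equation $\phi'' = \phi$ that (\ref{second-order}) reduces to away from $\phi = c$ when $a = 0$, across the points where $\phi = c$.

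Next I would compute $\mathfrak{L}_0(b)$ in closed form. Setting $a = 0$ in (\ref{period-L-function}) the factor $\sqrt{c-\phi}$ cancels between numerator and denominator, leaving
\begin{equation*}
\mathfrak{L}_0(b) = \int_{\sqrt{-2b}}^{c} \frac{2\,d\phi}{\sqrt{\phi^2 + 2b}} = 2\cosh^{-1}\!\left(\frac{c}{\sqrt{-2b}}\right).
\end{equation*}
From this closed form all three conclusions drop out at once: $\mathfrak{L}_0(b) \to 0$ as $b \to -\frac{1}{2}c^2$ (argument of $\cosh^{-1}$ tends to $1$), $\mathfrak{L}_0(b) \to \infty$ as $b \to 0^-$ (argument tends to $\infty$), and $\mathfrak{L}_0'(b) > 0$ since $b \mapsto c/\sqrt{-2b}$ is strictly increasing on $(-\frac{1}{2}c^2,0)$ and $\cosh^{-1}$ is strictly increasing on its domain.

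The only step that needs care is justifying that $\lim_{a\to 0^+} \mathfrak{L}(a,b,c) = \mathfrak{L}_0(b)$: a square-root singularity at the upper endpoint $\phi_+$ persists in the limit, so one cannot naively pass the limit under the integral. I expect this to be the main (but still routine) obstacle, and would handle it with the standard angular substitution $\phi = \frac{1}{2}(\phi_-+\phi_+) + \frac{1}{2}(\phi_+-\phi_-)\sin\theta$, which regularizes both endpoints; after the substitution the integrand is smooth in $a$ down to $a = 0$ and continuity follows by dominated convergence.
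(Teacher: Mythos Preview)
Your proof is correct and follows essentially the same route as the paper: both exploit that at $a=0$ the equation reduces to $\phi''=\phi$ with turning points $\phi_-=\sqrt{-2b}$ and $\phi_+=c$, yielding a closed-form relation between $b$ and the period. The paper writes the explicit peaked profile $\phi(x)=c\cosh(\tfrac{L}{2}-|x|)/\cosh(\tfrac{L}{2})$ and reads off $b=-c^2/(2\cosh^2(\tfrac{L}{2}))$, which is precisely the inverse of your formula $\mathfrak{L}_0(b)=2\cosh^{-1}(c/\sqrt{-2b})$; your added discussion of the $a\to0^+$ limit via the angular substitution is more careful than the paper's treatment, which simply sets $a=0$.
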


\begin{proof}
If $a = 0$, then $\phi$ satisfies the equation $\phi'' - \phi = 0$ 
with 
$$
\max_{x \in [-\frac{L}{2},\frac{L}{2}]} \phi(x) = \phi(0) = c
$$ 
since $\phi_+ = c$ and $\phi_- = \sqrt{2|b|}$. This equation can be solved explicitly
\begin{equation}
\label{peaked-wave}
\phi(x) = c \frac{\cosh\left(\frac{L}{2}-|x|\right)}{\cosh\left(\frac{L}{2}\right)}, \quad x \in \left[-\frac{L}{2},\frac{L}{2}\right].
\end{equation}
The periodic wave is peaked at $x = 0$ and smooth at $x = \pm \frac{L}{2}$ with $\phi'\left(\pm \frac{L}{2}\right) = 0$. 
It follows from (\ref{quadra}) and (\ref{peaked-wave}) that 
\begin{equation}
\label{b-L}
b = \frac{1}{2} \left[ (\phi')^2 - \phi^2 \right] = -\frac{c^2}{2 \cosh^2\left(\frac{L}{2}\right)}, 
\end{equation}
in agreement with $\phi_- = \sqrt{2|b|}$. 
Hence $b \in (-\frac{1}{2}c^2,0)$ and it follows from (\ref{b-L}) that $L = \mathfrak{L}_0(b)$ increases 
in $b$ and satisfies $\mathfrak{L}_0(b) \to 0$ as $b \to-\frac{1}{2}c^2$ and $\mathfrak{L}_0(b) \to \infty$ as $b \to 0$. 
\end{proof}

\begin{remark}		
	The two boundaries of Lemmas \ref{remark-right} and \ref{remark-upper}
	intersect at $a = a_c = \frac{4c^3}{27}$, where the two critical points coallesce: $\phi_1 = \phi_2 = \frac{c}{3}$. This corresponds to $b = b_c = \frac{c^2}{6}$. 	The two boundaries intersect with the third boundary 
	$a = 0$ of Lemma \ref{remark-left} at $b = 0$ and $b = -\frac{1}{2} c^2$ respectively.
\end{remark}

Finally, we prove the main result of this section 
that the period function $\mathfrak{L}(a,b,c)$ is a strictly increasing function 
of $b$ for any fixed $a \in (0,a_c)$ and $c > 0$.

\begin{theorem}
	\label{theorem-increasing}
	Fix $c > 0$ and $a \in (0,a_c)$, where $a_c := \frac{4c^3}{27}$.
	The period function $\mathfrak{L}(a,b,c)$ is strictly increasing in $b$.
\end{theorem}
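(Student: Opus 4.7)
The plan is to differentiate $\mathfrak{L}(a,b,c)$ in $b$ under the integral, after first regularizing the square-root singularities at the turning points. Substituting $\phi = \bar\phi + \Delta\cos\theta$ with $\bar\phi := \tfrac{1}{2}(\phi_++\phi_-)$ and $\Delta := \tfrac{1}{2}(\phi_+-\phi_-)$, and using that $\phi_-, \phi_+$ are two of the three roots of the cubic $P(\phi) := (c-\phi)(2b+\phi^2) - 2a$, Vieta's relation identifies the third root as $\phi_* = c - \phi_+ - \phi_-$ (which satisfies $\phi_* < \phi_-$ throughout the existence region), and factoring $(\phi-\phi_-)(\phi_+-\phi) = \Delta^2\sin^2\theta$ produces the smooth representation
\begin{equation*}
	\mathfrak{L}(a,b,c) = 2\int_0^\pi \sqrt{\frac{c - \phi(\theta)}{\phi(\theta) - \phi_*}}\, d\theta.
\end{equation*}

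Next I compute $\alpha_\pm := \partial_b \phi_\pm$ from (\ref{eq-2}); combined with the sign information in (\ref{eq-3}), this gives $\alpha_+ > 0$ and $\alpha_- < 0$. A short manipulation of the turning-point equations (\ref{roots-turning-points}), using the auxiliary identity $2b = c(\phi_++\phi_-) - (\phi_+^2 + \phi_+\phi_- + \phi_-^2)$ obtained by subtracting the two equations in (\ref{roots-turning-points}), shows moreover that $\alpha_+ + \alpha_- < 0$, so that $\partial_b \phi_* > 0$. Differentiating the new representation under the sign and collecting terms by $\cos\theta$ then yields
\begin{equation*}
	\partial_b\mathfrak{L} = \int_0^\pi \frac{-c(\alpha_+ + \alpha_-) + (\phi_+\alpha_- - \phi_-\alpha_+)\cos\theta}{(\phi(\theta) - \phi_*)^{2}}\sqrt{\frac{\phi(\theta) - \phi_*}{c - \phi(\theta)}}\, d\theta,
\end{equation*}
in which the constant term in the numerator is strictly positive while the $\cos\theta$-coefficient is negative, so pointwise positivity of the integrand fails.

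The technical heart of the proof is therefore to show strict positivity of this integral in spite of the pointwise sign change. My first attempt would be integration by parts in $\theta$, using $\cos\theta\, d\theta = d(\sin\theta)$ and $d\phi/d\theta = -\Delta\sin\theta$ to convert the oscillatory $\cos\theta$-term into an integral with a definite sign together with vanishing boundary contributions; the turning-point identities and Vieta's relations should then combine the residual terms into a manifestly positive expression. A fallback would be the classical Chicone/Schaaf monotonicity criterion applied to the potential $U(\phi) = -\tfrac{1}{2}\phi^2 + a/(c-\phi)$, which reduces the question to the sign of an explicit polynomial inequality in $\phi$ on the admissible range $\phi \in (\phi_-, \phi_+) \subset (0, c)$. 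The main obstacle, as anticipated, is the algebraic control of the residual terms after this cancellation; as a consistency check I would verify the small-oscillation limit $\Delta \to 0$, in which $\mathfrak{L} \to 2\pi/\omega$ with $\omega$ as in (\ref{period-boundary}) and the leading $\Delta^2$ correction can be computed explicitly and confirmed to have the right sign.
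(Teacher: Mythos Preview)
Your regularised representation and the derivative computation are correct: the substitution $\phi=\bar\phi+\Delta\cos\theta$ and Vieta for the cubic $P(\phi)=(c-\phi)(2b+\phi^2)-2a$ do give the smooth formula $\mathfrak{L}=2\int_0^\pi\sqrt{(c-\phi)/(\phi-\phi_*)}\,d\theta$, and your expression for $\partial_b\mathfrak{L}$ with $A:=-c(\alpha_++\alpha_-)>0$ and $B:=\phi_+\alpha_--\phi_-\alpha_+<0$ follows. The sign claim $\alpha_++\alpha_-<0$ is right and can in fact be obtained cleanly as $\alpha_++\alpha_-=-2(c-\phi_*)/[(\phi_+-\phi_*)(\phi_--\phi_*)]$.

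Where your proposal stops being a proof is the integration-by-parts step. If you write the $B\cos\theta$-term as $B\int_0^\pi g(\phi)\,d(\sin\theta)$ with $g(\phi)=(\phi-\phi_*)^{-3/2}(c-\phi)^{-1/2}$, the boundary terms vanish and you are left with $B\Delta\int_0^\pi g'(\phi)\sin^2\theta\,d\theta$; but
\[
g'(\phi)=\tfrac12(\phi-\phi_*)^{-5/2}(c-\phi)^{-3/2}\bigl(4\phi-3c-\phi_*\bigr),
\]
and $4\phi-3c-\phi_*$ changes sign on $(\phi_-,\phi_+)$ in general, so this one integration by parts does not produce a manifestly signed integrand either. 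You would need a more delicate splitting or a different antiderivative, and nothing in your sketch indicates how to choose it. As it stands the primary route is incomplete.

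Your stated fallback --- Chicone's convexity criterion applied to the Newtonian potential --- is precisely what the paper does. After recentring the potential at the center $\phi_2$ via $x=(\phi-\phi_2)/\phi_2$ one has $H=\tfrac12 y^2+V(x)$ with a one-parameter potential $V$, and Chicone's criterion reduces $\ell'(h)>0$ to convexity of $W=V/(V')^2$. The paper computes $W''(x)=-3(\eta-x)R(x)/[(x-x_1)^4(x-x_3)^4]$ with an explicit cubic $R$, and then shows $R<0$ on the relevant interval by checking that the discriminant of $R$ is negative on the whole parameter range (so $R$ has a single real root) and locating that root outside $[x_1,x_2]$. This is exactly the ``explicit polynomial inequality'' you allude to; the substance of the argument is in controlling the cubic's unique real root, which takes a short case analysis in the shape parameter. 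If you pursue your fallback you will land on essentially the same computation, so I would drop the integration-by-parts plan and go straight to Chicone.
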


\begin{proof}
	Let $a = \phi_2 (c-\phi_2)^2$, where $\phi_2$ is the second root in 
	the ordering (\ref{ordering}). Using the transformation $\{x=\frac{\phi-\phi_2}{\phi_2}, y=\frac{v}{\phi_2}\}$, we can write the second-order equation (\ref{second-order}) as the planar system 
\begin{equation}\label{sys}
\left\{
\begin{array}{l}
x' =y,\\[2pt]
y' =1+x-\frac{\eta^2}{(\eta - x)^2},
\end{array}\right.
\end{equation}
associated with the Hamiltonian 
\begin{equation}
\label{potential-H}
H(x,y)=\frac{y^2}{2} + V(x), \quad V(x) := -\frac{x^2}{2} - x -\eta+ \frac{\eta^2}{\eta-x},
\end{equation}
where $\eta = \frac{c-\phi_2}{\phi_2}\in(0,2)$.

The potential $V$ is smooth away from the singular line $x=\eta$, has a local minimum at $x=0$ and two maxima at $x_1:=\eta-\frac{1}{2} -\frac{\sqrt{4\eta+1}}{2}$ and $x_3:=\eta-\frac{1}{2} +\frac{\sqrt{4\eta+1}}{2}$. 
The center at the origin is surrounded by periodic orbits $\gamma_h$, which lie inside the level curves $H(x,y) = h$ with $h\in(0,h^*)$ and $h^*=V(x_1)$. Denote by $x_2$ the unique solution of $V(x_1)=V(x)$ such that $x_1<0<x_2<\eta<x_3$, see Figure \ref{Fig_potential}. Finally, define the period function of the center $(0,0)$ of system (\ref{sys}) by 
\begin{equation}
\label{period-function-h}
\ell(h)=\int_{\gamma_h} \frac{dx}{y} \quad \text{ for each } h\in(0,h^*).
\end{equation}
Note that $b = \phi_2^2 \left(h + \eta - \frac{1}{2}\right)$ 
and $\mathfrak{L}(a,b,c) = \ell(h)$ for fixed $a \in (0,a_c)$ and $c > 0$.
Since $\phi_2$ is fixed, we have $\partial_b \mathfrak{L}(a,b,c) > 0$ if and only if $\ell'(h) > 0$.

\vspace{0.25cm}

\begin{figure}[htb!]
\includegraphics[width=0.5\textwidth]{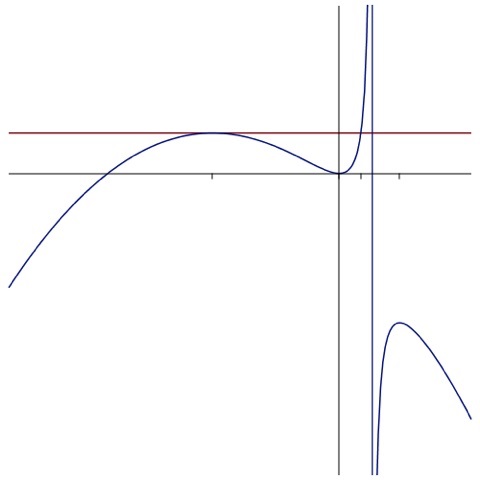}	
	\caption{The potential function $V(x)$ plotted for $\eta=\frac{1}{4}$.} 
	\label{Fig_potential}
\end{figure}

To prove that $\ell'(h) > 0$, we shall use a monotonicity criterion by Chicone \cite{Chic} for planar systems with Hamiltonians of the form (\ref{potential-H}), where $V$ is
a smooth function on $(x_1,x_2)$ with a nondegenerate relative minimum at the origin. The period function $\ell(h)$ is monotonically increasing in $h$ if the function 
$$
W(x) := \frac{V(x)}{(V'(x))^2}
$$
is convex in $(x_1,x_2)$. Hence, we have to prove that $W''(x) > 0$ for every $x \in (x_1,x_2)$. A straightforward computation shows that 
\begin{equation}
\label{second-der}
W''(x)=\frac{-3(\eta - x) R(x)}{(x-x_1)^4 (x-x_3)^4},
\end{equation}
where
\begin{equation*}
R(x)=\left( -2\,\eta+1 \right) {x}^{3}+\eta\, \left( 6\,\eta-7 \right) 
{x}^{2}-3\,{\eta}^{2} \left( 2\,\eta-3 \right) x+{\eta}^{2} \left( 
2\,\eta+1 \right)  \left( \eta-2 \right). 
\end{equation*}
Since $x_2 < \eta$, we need to show that $R(x) < 0$ for $x \in [x_1,x_2]$ and  $\eta \in (0,2)$. Note that $R(0)=\eta^2(2\eta+1)(\eta - 2) < 0$ for $\eta \in (0,2)$. 

The discriminant of $R$ with respect to $x$ is given by
\begin{equation}
\label{Discr}
\mbox{Disc}_x(R)=-4(4\eta + 1)(4\eta^2 - 16\eta + 27)\eta^4, 
\end{equation}
which is strictly negative for $\eta\in(0,2)$. Hence, for $\eta \neq \frac{1}{2}$ the cubic polynomial $R$ has exactly one real root, say $x_0$.

For $\eta < \frac{1}{2}$, it follows from the dominant behavior 
of $R$ that $R(x) \to -\infty$ as $x \to -\infty$.
Since $R(\eta) = -2 \eta^2 < 0$, it is clear that the only 
real root $x_0$ is located for $x_0 > \eta$. Therefore, 
$R(x) < 0$ for $x \in [x_1,x_2]$ with $x_2 < \eta$.

For $\eta > \frac{1}{2}$, we have $R(x) \to -\infty$ as $x \to +\infty$. We claim that 
\begin{equation}
\label{R-x-1}
R(x_1)= \frac{1}{2} ((\eta - 1)\sqrt{4\eta + 1} - \eta - 1) 
(4 \eta + 1) <0
\end{equation}
for  $\eta\in(0,2)$. Therefore, the only real root $x_0$ is located for $x_0 < x_1$ and $R(x) < 0$ for $x \in [x_1,x_2]$.
In order to prove (\ref{R-x-1}), we substitute $\eta=(w^2-1)/4$ into $R(x_1)$ and obtain $R(x_1)=\frac{1}{4} (w-3)(w+1)^2 w^2$ which is negative for $w\in(1,3)$. 

Finally, for $\eta = \frac{1}{2}$ we have that $R(x)=-2x^2 + 3x/2 - 3/4$ which is strictly negative for all $x$.

Hence $R(x) < 0$ for $x \in [x_1,x_2]$ if $\eta\in(0,2)$.
Therefore, $W''(x)>0$ for $x \in (x_1,x_2)$ and 
$\ell'(h) > 0$ follows by theorem proven in \cite{Chic}. 
\end{proof}

\begin{remark}
	The result of Theorem \ref{theorem-increasing}  can also be verified using the tools from \cite{ManVil09} and \cite{Vil2014}, where Hamiltonian systems with  Hamiltonian in the form $H(x,y) = \frac{1}{2} y^2 + V(x)$ are considered with $V(x) = \frac{1}{2m} x^{2m} + o(x^{2m})$, which is analytic in a neighborhood of $x=0$.
\end{remark}

\begin{remark}
	\label{remark-non-monotone}
	As claimed in Remark \ref{remark-existence}, the period function $\mathfrak{L}(a,b,c)$ has different monotonicity properties in $a$ for fixed $b \in (-\frac{1}{2} c^2,\frac{1}{6} c^2)$ and $c > 0$. To be precise, 
the period function 	
	\begin{itemize}
	\item is monotonically increasing in $a$ if $b \in (-\frac{1}{2} c^2,-(1 - \frac{\sqrt{2}}{\sqrt{3}}) c^2]$;
	\item has a single maximum point in $a$ if $b \in (-(1 - \frac{\sqrt{2}}{\sqrt{3}}) c^2,0)$; 
	\item is monotonically decreasing in $a$ if $b \in [0,\frac{1}{6} c^2)$.
	\end{itemize}
This result was obtained in \cite[Theorem 2.5]{GV}, where the second-order equation (\ref{CHode}) with the first-order invariant (\ref{quadra}) 
was reformulated into the system
	\begin{equation}
	\dot{x} = y, \quad \dot{y} = -\frac{y^2 + x - 3 x^2}{2(x + \nu)},
	\end{equation}
	where
	$$
	\nu := \frac{1}{6} \left[ \frac{2c}{\sqrt{c^2 - 6b}} - 1 \right].
	$$
	The value $b = -\frac{1}{2} c^2$ corresponds to $\nu = 0$, the value $b = -(1 - \frac{\sqrt{2}}{\sqrt{3}}) c^2$ corresponds to $\nu = -\frac{1}{10} + \frac{\sqrt{6}}{15}$, the value $b = 0$ 
	corresponds to $\nu = \frac{1}{6}$, and the value $b = \frac{1}{6} c^2$ corresponds to the limit $\nu \to \infty$.
\end{remark}

Figure \ref{fig-graphs-b} show the graphs of $\mathfrak{L}(a,b,c)$ versus $b$ for three cases of $a$. The period function is monotonically increasing in $b$ in agreement with Theorem \ref{theorem-increasing}. 
Figure \ref{fig-graphs} shows the graphs of $\mathfrak{L}(a,b,c)$ versus $a$ for three representative cases of $b$. The period function is increasing in $a$ 
for $b \in (-\frac{1}{2} c^2,-(1 - \frac{\sqrt{2}}{\sqrt{3}}) c^2]$ (left), 
has a single maximum point in $a$ if $b \in (-(1 - \frac{\sqrt{2}}{\sqrt{3}}) c^2,0)$ (middle) and is monotonically decreasing in $a$ if $b \in [0,\frac{1}{6} c^2)$ (right), in agreement with Remark \ref{remark-non-monotone}.

\begin{figure}[htb!]
	\includegraphics[width=0.32\textwidth]{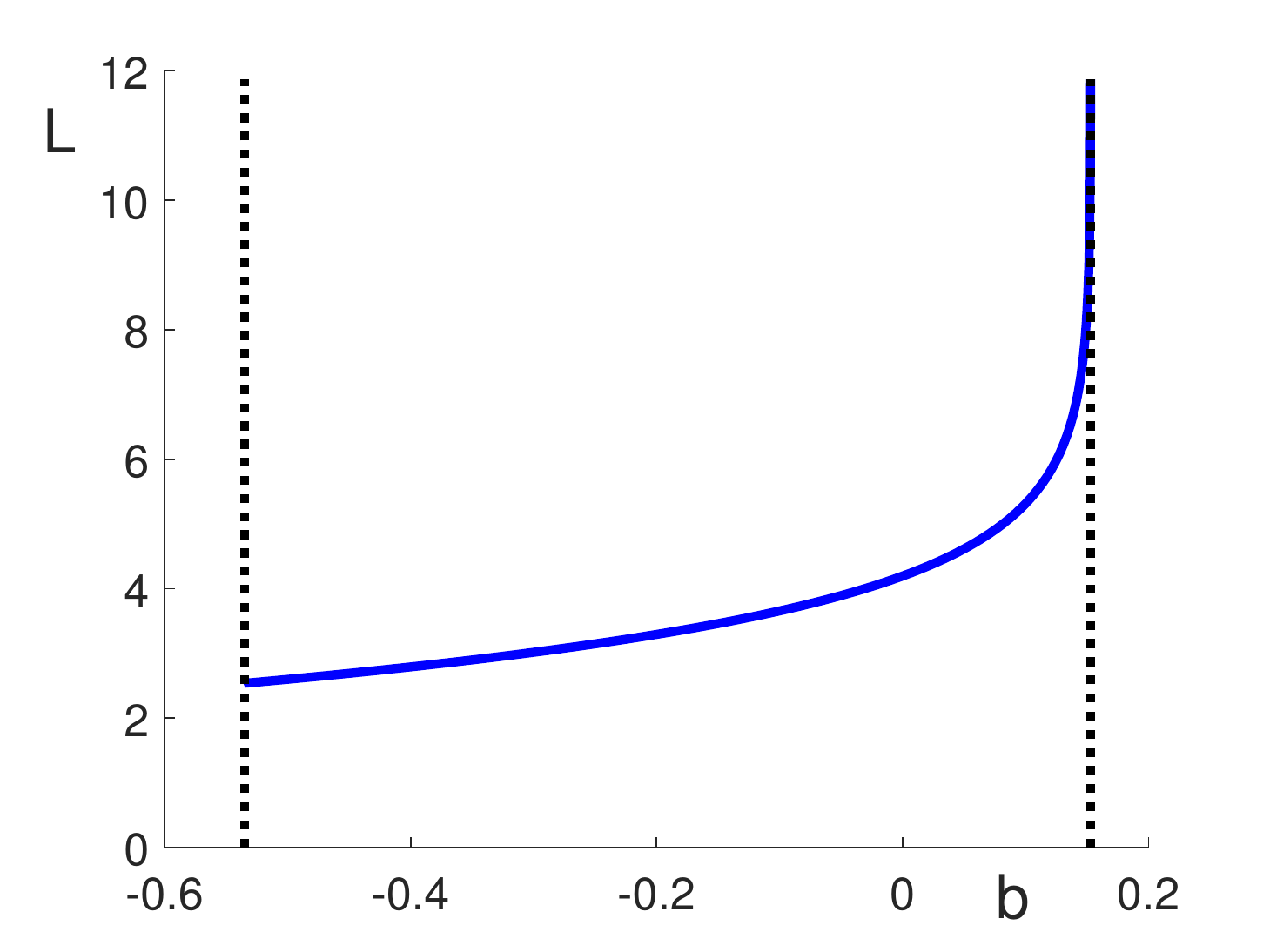}
	\includegraphics[width=0.32\textwidth]{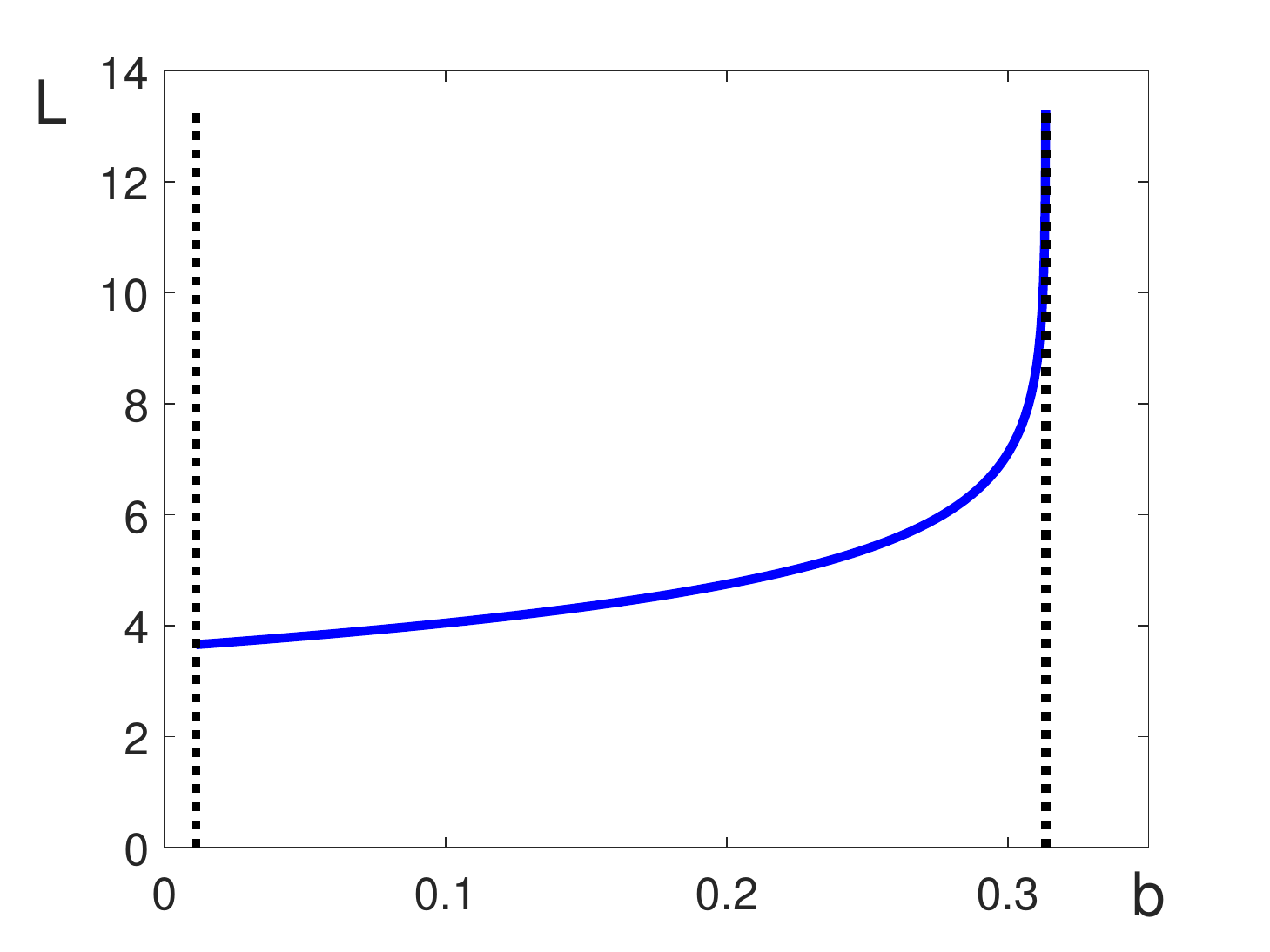}
	\includegraphics[width=0.32\textwidth]{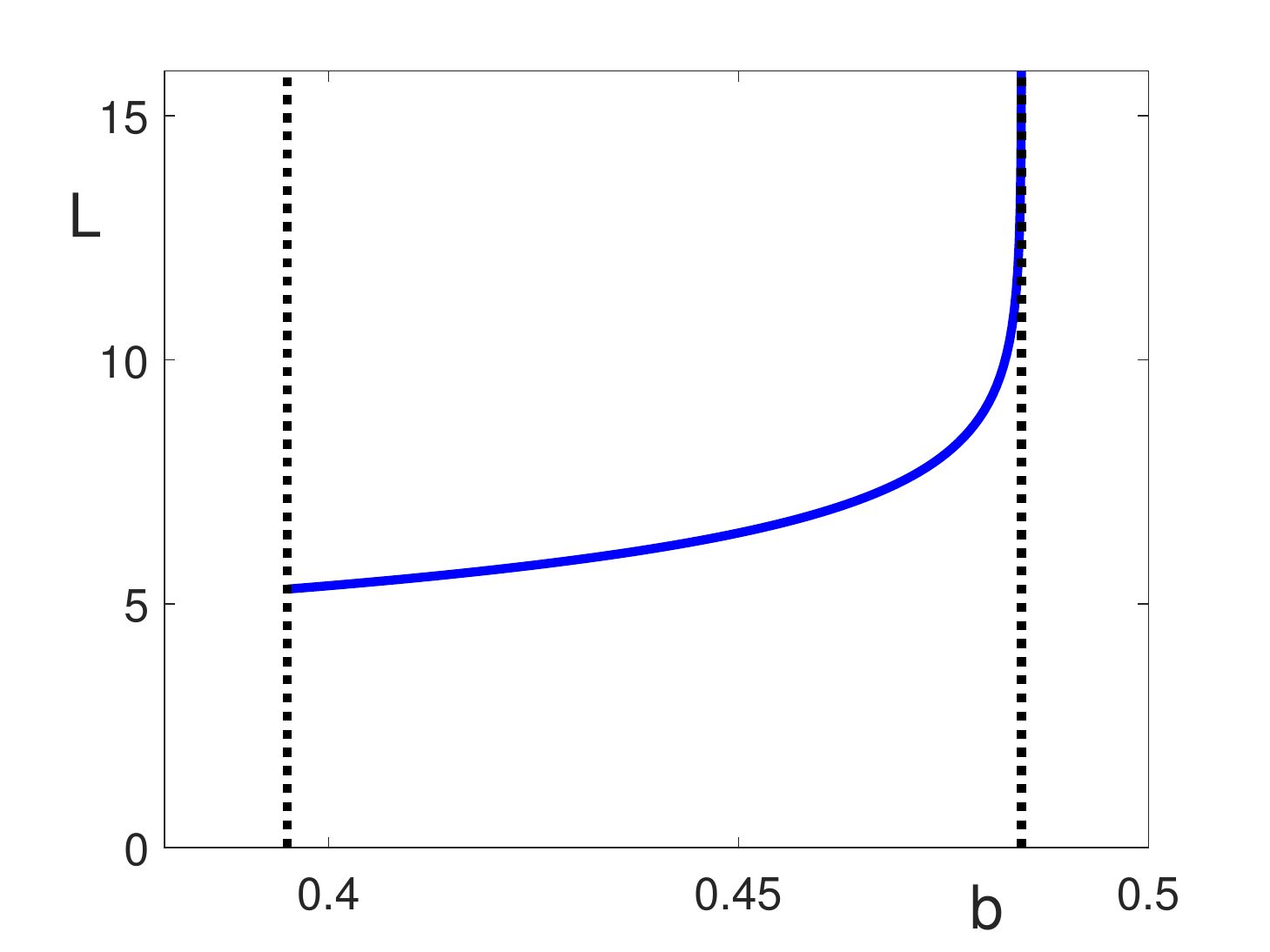}
	\caption{The period function $\mathfrak{L}(a,b,c)$ versus parameter $b$ for the smooth periodic solutions with $c = 2$ and three values of $a$: (left) $a = 0.3$, (middle) $a = 0.6$, and (right) $a = 0.9$.} \label{fig-graphs-b}
\end{figure}

\begin{figure}[htb!]
	\includegraphics[width=0.32\textwidth]{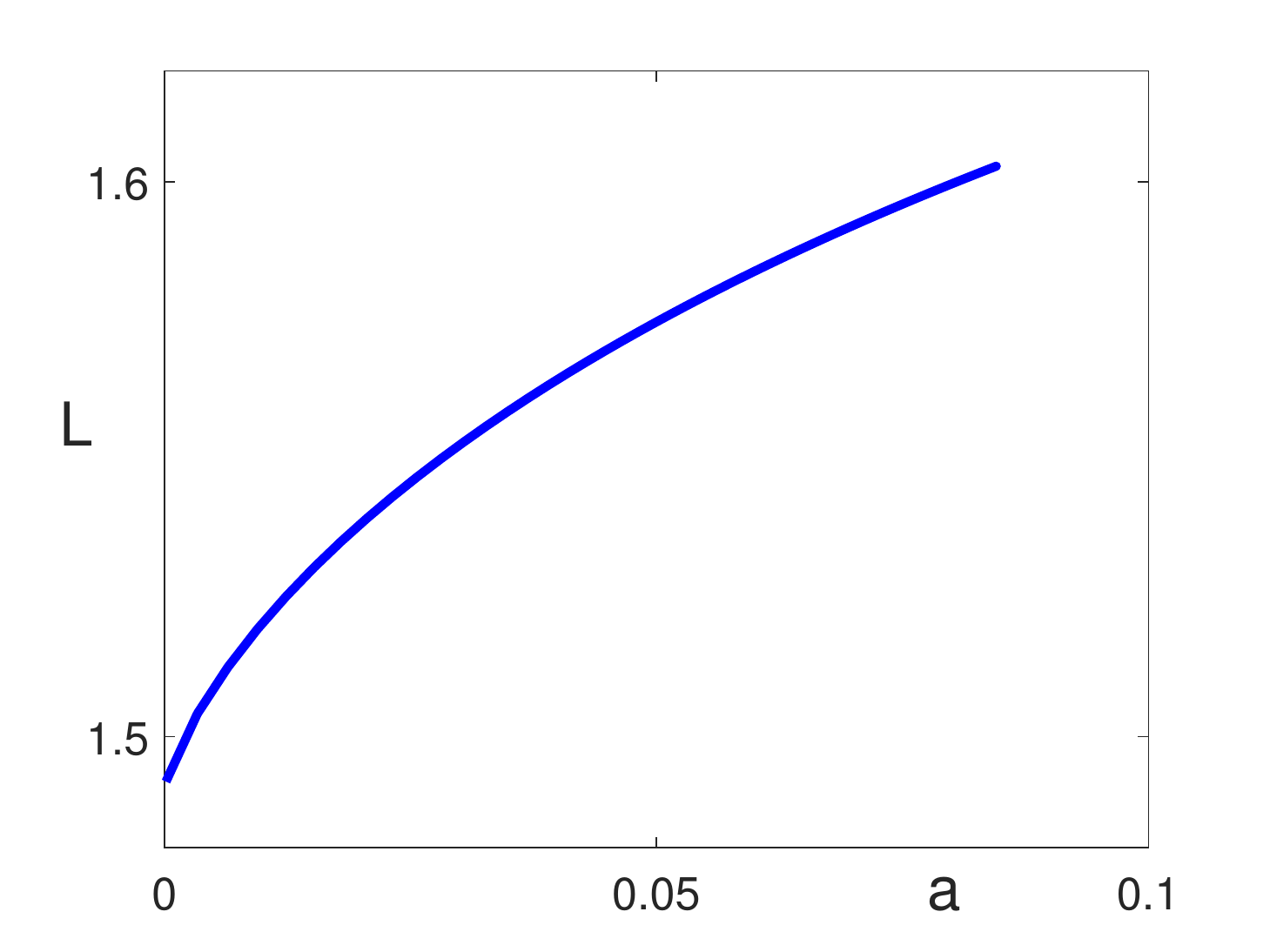}
	\includegraphics[width=0.32\textwidth]{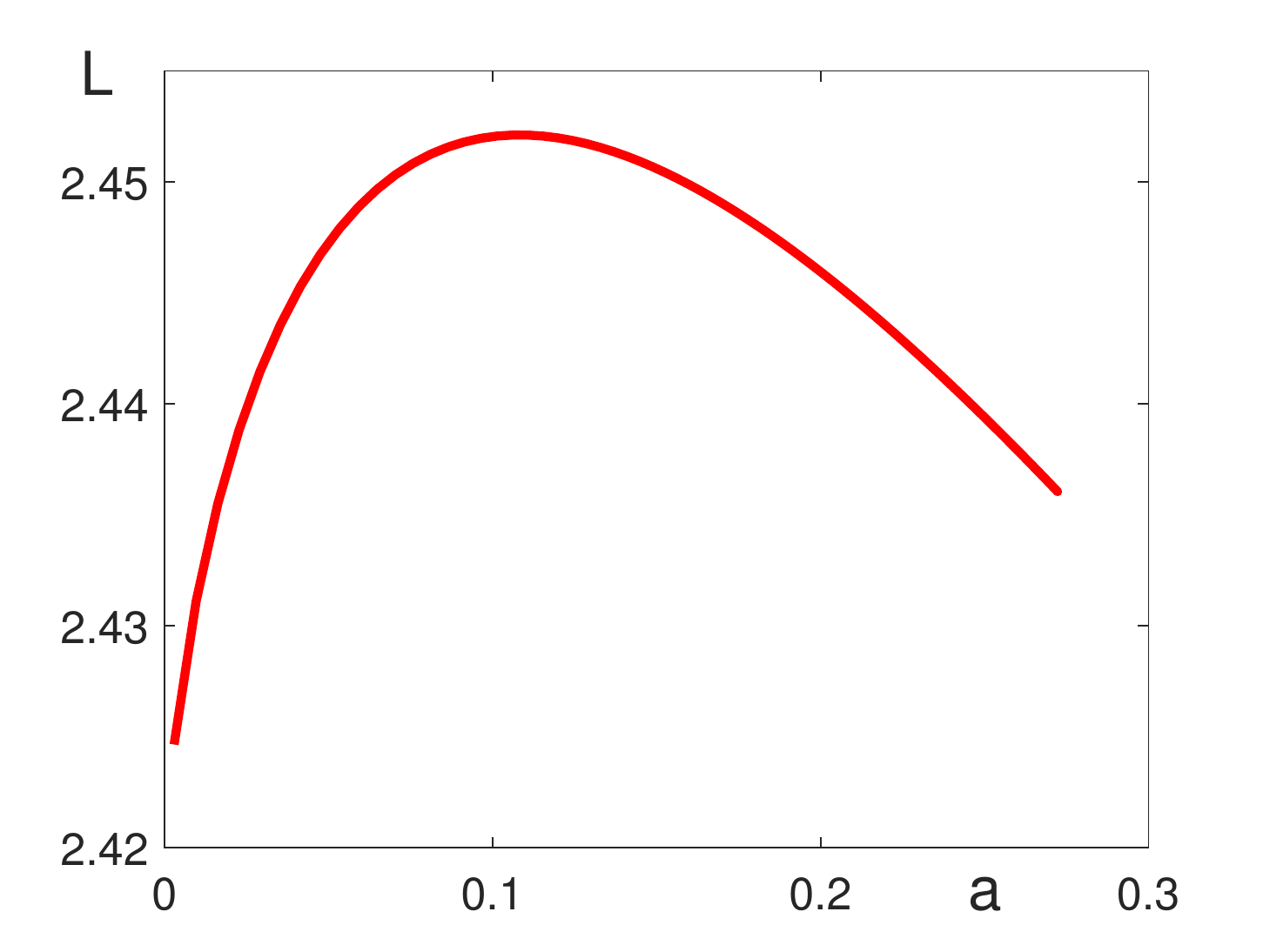}
	\includegraphics[width=0.32\textwidth]{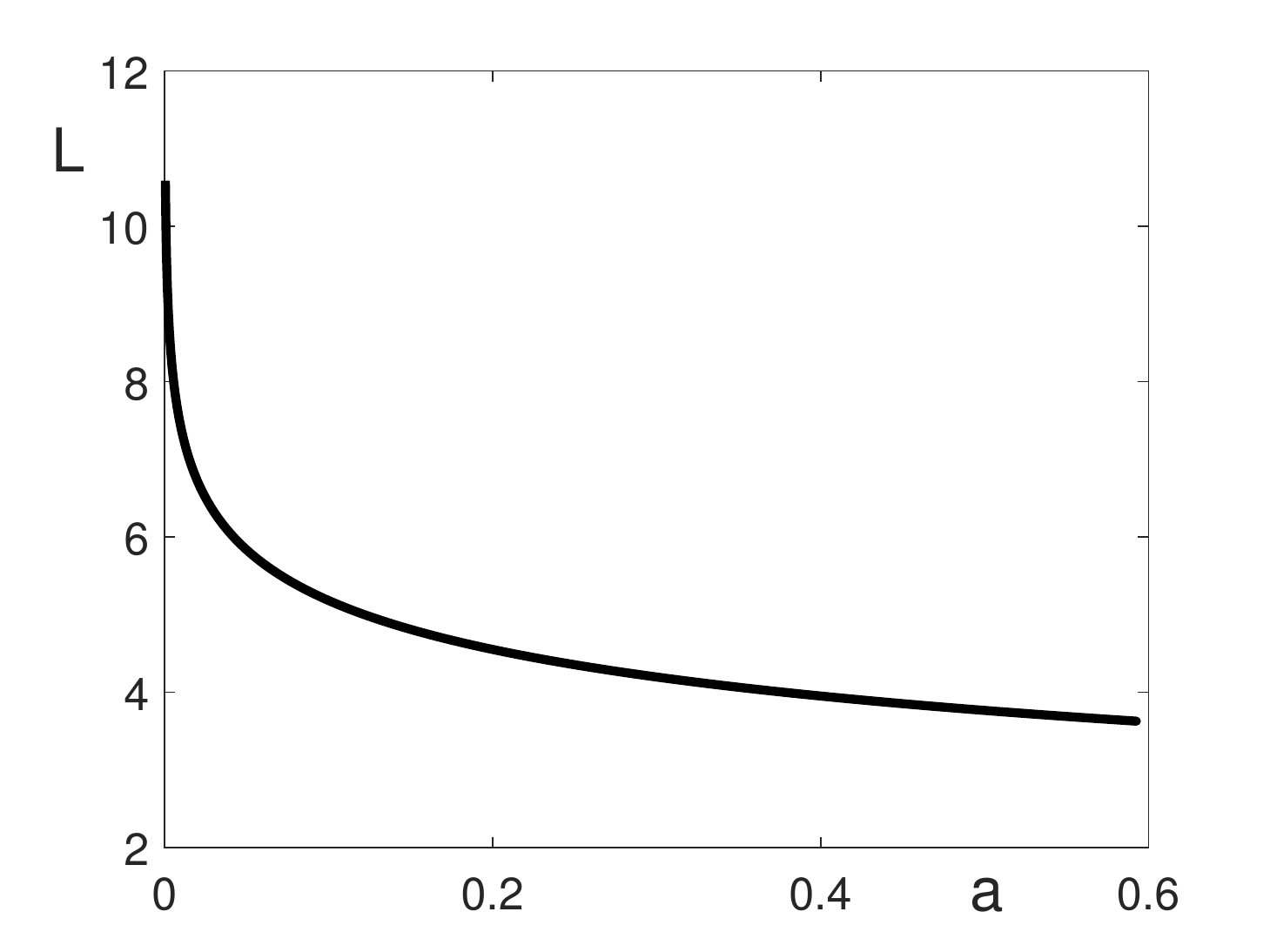}
	\caption{The period function $\mathfrak{L}(a,b,c)$ versus parameter $a$ for the smooth periodic solutions with $c = 2$ and three values of $b$: (left) $b = -1.2$, (middle) $b = -0.6$, and (right) $b = 0$.} \label{fig-graphs}
\end{figure}

For the study of spectral stability of periodic solutions in Section \ref{sec-4}, it is important to fix the period $L$ and consider the family of $L$-periodic solutions along a curve in the $(a,b)$ plane for a fixed $c > 0$. The following result provides this characterization of the $L$-periodic solutions. 

\begin{lemma}
	\label{lemma-fixed-period}
	Fix $c > 0$ and $L > 0$. There exists a $C^1$ mapping $a \mapsto b = \mathcal{B}_L(a)$ for $a \in (0,a_L)$ with some $a_L \in (0,\frac{4}{27}c^3)$
	and a $C^1$ mapping $a \mapsto \phi = \Phi_L(\cdot,a) \in H^{\infty}_{\rm per}$ of smooth $L$-periodic solutions along the curve $b = \mathcal{B}_L(a)$.
\end{lemma}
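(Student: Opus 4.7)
The plan is to obtain the curve $b = \mathcal{B}_L(a)$ as the level set $\{\mathfrak{L}(a,b,c) = L\}$ inside the open existence region of Theorem \ref{theorem-existence}, and to apply the implicit function theorem using Theorem \ref{theorem-increasing}. Smoothness of the period function $\mathfrak{L}(a,b,c)$ defined by (\ref{period-L-function}) in the interior of the existence region follows from smooth dependence of periodic orbits of the planar Hamiltonian system with first integral (\ref{first-order}) on parameters, using the fact that the profile stays strictly bounded away from the pole $\phi = c$ by the ordering (\ref{ordering-turning-points}). Theorem \ref{theorem-increasing} yields $\partial_b \mathfrak{L}(a,b,c) > 0$ at every interior point, so setting $F(a,b) := \mathfrak{L}(a,b,c) - L$, the implicit function theorem produces, in a neighborhood of each zero of $F$, a $C^1$ function $b = \mathcal{B}_L(a)$ solving $F(a, \mathcal{B}_L(a)) = 0$.

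To identify the maximal interval on which this local curve extends, I combine Theorem \ref{theorem-increasing} with the boundary behavior from Lemmas \ref{remark-right} and \ref{remark-upper}: for each fixed $a \in (0, \tfrac{4}{27} c^3)$, the map $b \mapsto \mathfrak{L}(a,b,c)$ is a strictly increasing continuous map from $(b_-(a), b_+(a))$ onto $(\mathfrak{L}_-(a), \infty)$, since $\mathfrak{L}_+(a) = \infty$ by Lemma \ref{remark-upper}. Hence the equation $\mathfrak{L}(a,b,c) = L$ has a unique solution $b = \mathcal{B}_L(a) \in (b_-(a), b_+(a))$ if and only if $\mathfrak{L}_-(a) < L$. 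By Lemma \ref{remark-right}, $\mathfrak{L}_-$ is a $C^1$ strictly increasing bijection from $(0, \tfrac{4}{27} c^3)$ onto $(0, \infty)$, so I define
$$
a_L := \mathfrak{L}_-^{-1}(L) \in \Bigl(0, \tfrac{4}{27} c^3 \Bigr),
$$
and obtain a globally defined $\mathcal{B}_L \in C^1((0, a_L))$ satisfying $\mathcal{B}_L(a) \in (b_-(a), b_+(a))$ throughout.

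For the profile mapping $a \mapsto \Phi_L(\cdot, a) \in H^\infty_{\rm per}$, Lemma \ref{period-to-energy} asserts that the $L$-periodic solution of (\ref{second-order}) with invariant (\ref{first-order}) is smooth in $x$ and depends smoothly on $(a,b,c)$; composing this with the $C^1$ curve $b = \mathcal{B}_L(a)$ and removing the translation ambiguity by placing the maximum $\phi_+$ at $x = 0$ (so that $\phi'(0) = 0$ and $\phi''(0) < 0$ by (\ref{eq-3})) produces a $C^1$ mapping $\Phi_L(\cdot, a)$. The main technical point, which is otherwise routine, is to upgrade the joint pointwise smoothness of $\phi$ in $(x,a,b,c)$ to $C^1$ regularity as an $H^\infty_{\rm per}$-valued map of $a$; this follows from standard smooth-dependence results for ODE solutions on parameters, together with the uniform separation $\phi(x) \in [\phi_-, \phi_+] \subset (0, c)$ that keeps all coefficients of the governing ODE bounded and smooth and hence yields uniform control of all $x$-derivatives of $\phi$ and of its $a$-derivative.
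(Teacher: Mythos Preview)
Your proposal is correct and follows essentially the same route as the paper: smoothness of $\mathfrak{L}$ plus $\partial_b \mathfrak{L} > 0$ from Theorem~\ref{theorem-increasing} to invoke the implicit function theorem, the boundary behavior of $\mathfrak{L}_-$ from Lemma~\ref{remark-right} to identify $a_L$ via $\mathfrak{L}_-(a_L) = L$, and Lemma~\ref{period-to-energy} for the $C^1$ profile map. The only cosmetic difference is that the paper also invokes Lemma~\ref{remark-left} to pin down the left endpoint $a=0$, whereas you use Lemma~\ref{remark-upper} to argue that the range of $b\mapsto\mathfrak{L}(a,b,c)$ is $(\mathfrak{L}_-(a),\infty)$; both yield the same interval $(0,a_L)$.
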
 

\begin{proof}
	It follows from the monotonicity results in Lemmas \ref{remark-right} and \ref{remark-left} that 
	for every $c > 0$ and $L > 0$ there exists exactly one $L$-periodic solution on the left and right boundaries of the existence domain on the $(a,b)$-plane. The left boundary corresponds to $a = 0$ and the right boundary corresponds to $a = a_L$, where $a_L$ is uniquely defined 
	from the equation $\mathfrak{L}(a_L,b_-(a_L),c) = L$ with $b_-(a)$ defined in Lemma \ref{period-to-energy}.
	
	Since $\mathfrak{L}(a,b,c)$ is smooth in $(a,b,c)$ and it is strictly increasing in $b$ by Theorem \ref{theorem-increasing}, the existence of the $C^1$ mapping 
	$a \mapsto b = \mathcal{B}_L(a)$ for $a \in (0,a_L)$ follows by the implicit function theorem for $\mathfrak{L}(a,b,c) = L$ with fixed $c > 0$ and $L > 0$. Indeed, 
	$\partial_a \mathfrak{L} + \mathcal{B}_L'(a) \partial_b \mathfrak{L} = 0$ and since $\partial_b \mathfrak{L} > 0$, $\mathcal{B}_L'(a)$ is uniquely defined for every $a \in (0,a_L)$. 
	Since $\phi$ is smooth with respect to parameters by Lemma \ref{period-to-energy}, the mapping $a \mapsto \phi = \Phi_L(\cdot,a) \in H^{\infty}_{\rm per}$ is $C^1$ along the curve $b = \mathcal{B}_L(a)$.
\end{proof}

\begin{remark}
	\label{remark-fixed-period}
The mapping $b \mapsto \phi = \Psi_L(\cdot,b) \in H^{\infty}_{\rm per}$ may not be $C^1$ along the curve $b = \mathcal{B}_L(a)$ because of the non-monotonicity of $\mathfrak{L}(a,b,c)$ with respect to $a$. It follows from Remark \ref{remark-non-monotone} that there exists at most one point where $\mathcal{B}_L'(a) = 0$ and this is the minimum of the mapping $a \mapsto b = \mathcal{B}_L(a)$. The mapping $b \mapsto \phi = \Psi_L(\cdot,b) \in H^{\infty}_{\rm per}$ is not $C^1$ at the minimum point. 
\end{remark}

The blue curve in Fig. \ref{fig-domain} shows the location of the single maximum of the period function $\mathfrak{L}(a,b,c)$ in the $(a,b)$ plane for fixed $c = 2$
according to Remark \ref{remark-non-monotone}. The location 
of the maximum moves to the  right boundary as $b \to -(1 - \frac{\sqrt{2}}{\sqrt{3}})c^2$ and to the left boundary as $b \to 0$.

The black, green, and cyan curves in Figure \ref{fig-domain} also show 
the curves in the $(a,b)$ plane where smooth $L$-periodic solutions 
exist for three different values of $L$. The black curve with smaller $L$ 
does not intersect the blue curve and the family of $L$-periodic solutions remains smooth both in $a$ and $b$. However, the green and cyan curves with larger periods intersect the blue curve and the family of $L$-periodic solutions is smooth in $a$ but is not smooth in $b$ at the minimum point of $b = \mathcal{B}_L(a)$.

The numerical method used to generate Figures \ref{fig-domain}, \ref{fig-graphs-b}, and \ref{fig-graphs} is described in Appendix \ref{appendix}.

\section{Spectral properties of the linearized operator}
\label{sec-3}

Here we study the linearization of the CH equation (\ref{CH}) at the smooth periodic solutions of the system (\ref{CHode}), (\ref{second-order}), and (\ref{quadra}) and 
provide the proof of the last assertion of Theorem \ref{theorem-existence}. 

Adding a perturbation $v$ to the smooth travelling wave $\phi$ propagating with the same fixed speed $c$ in 
\begin{equation}
\label{perturbation-v}
u(x,t) = \phi(x-ct) + v(x-ct,t)
\end{equation}
gives the perturbation equation derived from the CH equation (\ref{CH}):
\begin{equation}
\label{CH-linearization}
(1 - \partial_x^2) (v_t - c v_x) + 3 \partial_x (\phi v) + 2 v v_x = \partial_x (\phi v_{xx} + \phi' v_x + \phi'' v) + 2 v_x v_{xx} + v v_{xxx}.
\end{equation}
Dropping the quadratic terms in $v$ yields the linearized evolution equation
\begin{equation}
\label{CH-lin}
v_t = - J \mathcal{L} v, 
\end{equation}
where $J$ is defined in (\ref{sympl-1}) and $\mathcal{L}$ 
is the linearized operator given by (\ref{hill}).

Recall that the travelling periodic wave is spectrally stable in the sense 
of Definition \ref{defstab-spectral} if the spectrum of the linearized 
operator $J \mathcal{L}$ in $L^2_{\rm per}$ is located on $i \mathbb{R}$. 
The following lemma reformulates the spectral stability criterion in terms of 
the linearized operator $\mathcal{K}$ introduced in (\ref{hill-m}).

\begin{lemma}
	\label{lem-equivalency}
	The spectrum of $J \mathcal{L}$ in $L^2_{\rm per}$ is located on the imaginary axis if and only if the spectrum of $(c-\phi)^{-1} \partial_x (c-\phi)^{-1}  \mathcal{K}$ in $L^2_{\rm per}$ is located on the imaginary axis. 
\end{lemma}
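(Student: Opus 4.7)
The plan is to use the momentum variable $w := (1-\partial_x^2)v$ as an intertwiner between the two spectral problems, exploiting that $\Lambda := 1-\partial_x^2 \colon H^2_{\rm per} \to L^2_{\rm per}$ is a bounded isomorphism with bounded inverse. Under this change of variables the eigenvalue equation for $-J\mathcal{L}$ on $H^2_{\rm per}$ becomes an eigenvalue equation on $L^2_{\rm per}$ for a transformed operator, and the claim is that this transformed operator coincides exactly with $(c-\phi)^{-1}\partial_x(c-\phi)^{-1}\mathcal{K}$.

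First I would derive the linearized evolution equation for $w$. Applying $\Lambda$ to $v_t = -J\mathcal{L}v$ and using $\Lambda J = -\partial_x$ immediately gives $w_t = \partial_x \mathcal{L}v = \partial_x \mathcal{L}\Lambda^{-1}w$. Alternatively, one may linearize (\ref{CHm}) directly by substituting $m = \mu + w$ and $u = \phi + v$ with $\mu = \phi - \phi''$, and after using the travelling wave identity (\ref{m-phi-connection}) together with $w = v - v''$, reorganize all linear terms into the divergence form
\begin{equation*}
w_t = \partial_x\bigl[(c-\phi)w - 2\mu v + \phi' v' - \phi'' v\bigr],
\end{equation*}
whose bracket expands to $\mathcal{L}v$, yielding the same formula.

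The core step is then to verify the operator identity
\begin{equation*}
\partial_x \mathcal{L}\Lambda^{-1} = (c-\phi)^{-1}\partial_x(c-\phi)^{-1}\mathcal{K}
\end{equation*}
on $L^2_{\rm per}$. Expanding $\mathcal{K}w = (c-\phi)^3 w - 2av$ with $v = \Lambda^{-1}w$, applying $(c-\phi)^{-1}$, differentiating, and dividing once more by $(c-\phi)$, the travelling wave identities $(c-\phi)^2 \mu = a$ (from (\ref{second-order})) and $(c-\phi)\mu' = 2\mu\phi'$ (from (\ref{m-phi-connection})) collapse all terms to
\begin{equation*}
(c-\phi)^{-1}\partial_x(c-\phi)^{-1}\mathcal{K}w = (c-\phi)w_x - 2\phi' w - \mu' v - 2\mu v_x,
\end{equation*}
which agrees with $\partial_x \mathcal{L}v$ after the same divergence rearrangement used in the previous paragraph.

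Since $\Lambda$ is a Banach-space isomorphism, the resolvent equation $(-J\mathcal{L}-\lambda)v = f$ in $H^2_{\rm per}$ is equivalent, via $w = \Lambda v$ and $g = \Lambda f$, to the resolvent equation $\bigl((c-\phi)^{-1}\partial_x(c-\phi)^{-1}\mathcal{K}-\lambda\bigr)w = g$ in $L^2_{\rm per}$, so the two operators have the same spectrum. In particular, one spectrum lies on $i\mathbb{R}$ if and only if the other does. The main obstacle is the algebraic reduction in the third step, which requires judicious use of the travelling wave identities to make the $a$-dependent and $\mu$-dependent terms cancel; a secondary concern is verifying that $\Lambda$ conjugates the two resolvents in both the point and the continuous spectrum regimes, given that the two operators are naturally defined on different Sobolev scales.
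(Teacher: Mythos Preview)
Your proposal is correct and follows essentially the same route as the paper: both introduce the momentum perturbation $p = w = (1-\partial_x^2)v$, derive the linearized evolution $p_t = (c-\phi)^{-1}\partial_x(c-\phi)^{-1}\mathcal{K}p$ using the travelling wave identities $(c-\phi)^2\mu=a$ and $(c-\phi)\mu'=2\mu\phi'$, and conclude equality of spectra from the fact that $1-\partial_x^2$ is a bounded isomorphism intertwining the two problems. Your presentation is slightly more operator-theoretic (you verify the identity $\partial_x\mathcal{L}\Lambda^{-1}=(c-\phi)^{-1}\partial_x(c-\phi)^{-1}\mathcal{K}$ directly rather than arriving at it via the linearization of (\ref{CHm})), but the substance is identical.
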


\begin{proof}
Consider the time evolution of the CH equation in the form (\ref{CHm}), where $m := u - u_{xx}$. We add a perturbation $p$ to the smooth travelling wave $\mu$ in
\begin{equation}
\label{perturbation-p}
m(x,t) = \mu(x-ct) + p(x-ct,t).
\end{equation}
It follows from the decompositions (\ref{perturbation-v}) and (\ref{perturbation-p}) that $\mu = \phi - \phi''$ and $p = v - v_{xx}$.  Substituting (\ref{perturbation-v}) and (\ref{perturbation-p})
into (\ref{CHm}) gives the perturbation equation
\begin{equation}
\label{CHm-linearization}
p_t + (\phi - c) p_x + 2p \phi' + v \mu' + 2 \mu v_x + v p_x + 2 p v_x = 0.
\end{equation}
Dropping the quadratic terms in $p$ and $v$ yields the linearized evolution equation
\begin{equation}
\label{CHm-lin}
p_t + (\phi - c) p_x + 2p \phi' + v \mu' + 2 \mu v_x = 0. 
\end{equation}
It follows from \eqref{second-order} that $(c-\phi)^2 \mu = a$ and hence
\begin{equation}
\label{CHm-relation}
v \mu' + 2 \mu v_x = \frac{2a \phi'}{(c-\phi)^3} v + \frac{2a}{(c-\phi)^2} v_x.
\end{equation}
Multiplying (\ref{CHm-lin}) by $c-\phi$ and using (\ref{CHm-relation})
yield the equivalent evolution form:
\begin{equation}
\label{CHm-lin-alter}
\frac{\partial}{\partial t} \left[ (c-\phi) p \right] = 
\frac{\partial}{\partial x} \left[ (c-\phi)^2 p  - \frac{2a}{c-\phi} v \right], 
\end{equation}
which can be written in the form 
\begin{equation}
\label{CHm-lin-final}
p_t = (c-\phi)^{-1} \partial_x (c-\phi)^{-1} \mathcal{K} p, 
\end{equation}
where $\mathcal{K}$ is the linearized operator given by (\ref{hill-m}).
It follows from the equivalence of (\ref{CH-lin}) and (\ref{CHm-lin-final}) under the transformation $v = (1-\partial_x^2)^{-1} p$ 
that $\lambda \in \sigma(J \mathcal{L})$ in $L^2_{\rm per}$ if and only if 
$\lambda \in \sigma[(c-\phi)^{-1} \partial_x (c-\phi)^{-1} \mathcal{K}]$ 
in $L^2_{\rm per}$, where $\sigma(A)$ denotes the spectrum of a linear operator $A$ in $L^2_{\rm per}$.
\end{proof}

In what follows we study the spectra of the linearized operators 
$\mathcal{L}$ and $\mathcal{K}$ in $L^2_{\rm per}$. The following lemma 
shows that the spectra of these operators are different. 

\begin{lemma}
	\label{lemma-L-K}
	The spectrum of $\mathcal{L}$ in $L^2_{\rm per}$ is purely discrete. 
	The spectrum of $\mathcal{K}$ in $L^2_{\rm per}$ consists 
	of the strictly positive continuous spectrum at 
	$$
	{\rm image}[(c-\phi)^3] = [(c-\phi_+)^3,(c-\phi_-)^3]
	$$ 
	and the discrete spectrum outside ${\rm image}[(c-\phi)^3]$, where $\phi_{\pm}$ are the turning points defined in (\ref{roots-turning-points}).
\end{lemma}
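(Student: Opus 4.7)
The plan is to treat $\mathcal{L}$ and $\mathcal{K}$ separately, as their very different structures call for different arguments.

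For $\mathcal{L}$, the approach is classical. By Lemma \ref{period-to-energy} we have $\phi(x) \in (0,c)$ for every $x \in \mathbb{T}_L$, so the leading coefficient $c - \phi$ is a smooth function bounded away from zero, and the potential $c - 3\phi + \phi''$ is smooth and bounded. Hence $\mathcal{L} = -\partial_x(c-\phi)\partial_x + (c - 3\phi + \phi'')$ is a uniformly elliptic, formally self-adjoint second-order differential operator on the compact one-dimensional manifold $\mathbb{T}_L$. With domain $H^2_{\rm per} \subset L^2_{\rm per}$ it is self-adjoint, and for $C > 0$ large enough $(\mathcal{L}+C)^{-1}$ maps $L^2_{\rm per}$ boundedly into $H^2_{\rm per}$; the Rellich compact embedding $H^2_{\rm per} \hookrightarrow L^2_{\rm per}$ then makes the resolvent compact. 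The spectrum is therefore purely discrete and accumulates only at $+\infty$.

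For $\mathcal{K}$, I would split the operator as $\mathcal{K} = M_\phi + K_a$, where $M_\phi : f \mapsto (c-\phi)^3 f$ is bounded and self-adjoint and $K_a := -2a(1-\partial_x^2)^{-1}$. The operator $K_a$ is compact on $L^2_{\rm per}$ because $(1-\partial_x^2)^{-1}$ maps $L^2_{\rm per}$ into $H^2_{\rm per}$ and the embedding $H^2_{\rm per} \hookrightarrow L^2_{\rm per}$ on the compact torus is compact. Weyl's theorem on the stability of the essential spectrum under compact self-adjoint perturbations then gives $\sigma_{\rm ess}(\mathcal{K}) = \sigma_{\rm ess}(M_\phi)$, while the remainder of $\sigma(\mathcal{K})$ consists of isolated eigenvalues of finite multiplicity — exactly the desired discrete part.

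It remains to compute $\sigma(M_\phi)$. The function $x \mapsto (c-\phi(x))^3$ is smooth and non-constant on $\mathbb{T}_L$, with minimum $(c-\phi_+)^3$ and maximum $(c-\phi_-)^3$ attained at the turning points of $\phi$; by continuity its image is the whole interval $[(c-\phi_+)^3,(c-\phi_-)^3]$. The standard spectral theorem for multiplication operators then yields $\sigma(M_\phi) = \sigma_{\rm ess}(M_\phi) = [(c-\phi_+)^3,(c-\phi_-)^3]$, with no embedded eigenvalues because every level set $\{(c-\phi)^3 = \lambda\}$ is finite and hence of Lebesgue measure zero. Positivity is immediate from $c - \phi_+ > 0$ in the ordering (\ref{ordering-turning-points}). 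The only real work, and thus the main (minor) obstacle, is this spectral identification of $M_\phi$: one must exhibit approximate eigenfunctions supported in small neighborhoods of preimages $(c-\phi)^{-3}(\lambda)$ to show that every value in the range lies in the spectrum, and then exclude eigenvalues by the measure-zero argument. Everything else is a direct invocation of textbook operator-theoretic results.
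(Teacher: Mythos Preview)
Your proof is correct and follows essentially the same approach as the paper: compact resolvent for $\mathcal{L}$ via the Rellich embedding, and for $\mathcal{K}$ the splitting into the multiplication operator $(c-\phi)^3$ plus a compact self-adjoint perturbation, with a perturbation theorem then identifying the continuous (essential) spectrum with the range of the multiplier. The only cosmetic difference is that you invoke Weyl's theorem on compact perturbations, whereas the paper cites Kato's trace-class result; your choice is in fact the more economical one, since only compactness of $(1-\partial_x^2)^{-1}$ is needed for the stated conclusion.
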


\begin{proof}
	Since $c - \phi > 0$ and $\phi \in H^{\infty}_{\rm per}$, 
	the linearized operator $\mathcal{L}$ with 
	the dense domain $H_{\rm per}^2 \subset L^2_{\rm per}$ 
	is a self-adjoint, unbounded operator in $L_{\rm per}^2$. 
	Consequently, $\sigma(\mathcal{L}) \subset \mathbb{R}$ is purely discrete 
	in $L^2_{\rm per}$ due to the compact embedding of $H^2_{\rm per}$ into $L^2_{\rm per}$.
	
	Since $c - \phi > 0$, the linearized operator $\mathcal{K}$ is 
	a self-adjoint, bounded operator in $L^2_{\rm per}$, which is the sum 
	of a bounded and a compact operator in $L^2_{\rm per}$. 
	Consequently, $\sigma(\mathcal{K}) \subset \mathbb{R}$ includes 
	both the continuous and discrete spectra in $L^2_{\rm per}$ denoted by $\sigma_c$ and $\sigma_d$ respectively. 
	Since the compact operator $-2a(1-\partial_x^2)^{-1}$ 
	is in the trace class in $L^2_{\rm per}$, Kato's
	theorem (Theorem 4.4 in \cite{Kato-text}) gives
	$$ 
	\sigma_c(\mathcal{K}) = \sigma_c((c-\phi)^3) = {\rm image}[(c-\phi)^3] = [(c-\phi_+)^3,(c-\phi_-)^3].
	$$
	Since $\phi_+ < c$, 
	$\sigma_c(\mathcal{K})$ is strictly positive.
\end{proof}

The following two theorems describe the non-positive part 
of the spectrum of $\mathcal{L}$ and $\mathcal{K}$ in $L^2_{\rm per}$. 
The proofs rely on Theorem 3.1 in \cite{neves} 
(see also the classical Floquet theory in \cite{eastham,Magnus}) 
and on Sylvester's inertial law theorem (see \cite[Theorem 2.2]{lopes}). 
These auxilary results are formulated in the following two propositions.

\begin{proposition}\cite{neves}
	\label{teo12}
	Let $\mathcal{M} := -\partial_x^2+Q(x)$
	be the Schr\"{o}dinger operator 
	with the even, $L-$periodic, smooth potential $Q$. Assume that 
	$\mathcal{M} w = 0$ is satisfied by a linear combination 
	of two solutions $\varphi_1$ and $\varphi_2$ satisfying
	$$
	\varphi_1(x+L) = \varphi_1(x) + \theta \varphi_2(x)
	$$ 
	and 
	$$
	\varphi_2(x+L) = \varphi_2(x)
	$$ 
	with some $\theta \in \mathbb{R}$. Assume that $\varphi_2$ has two zeros on the period of $Q$. The zero eigenvalue of $\mathcal{M}$ in $L^2_{\rm per}$ is simple if $\theta \neq 0$ and double if $\theta = 0$. It is the second eigenvalue of $\mathcal{M}$ if $\theta \geq 0$ and the third eigenvalue of $\mathcal{M}$ if $\theta < 0$. 
\end{proposition}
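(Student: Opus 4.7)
The plan is to invoke classical Floquet--Magnus--Winkler theory for the Hill operator $\mathcal{M} = -\partial_x^2 + Q(x)$ together with the Sturm-type oscillation theorem for periodic eigenvalue problems. Since $Q$ is smooth and $L$-periodic, $\mathcal{M}$ acting on $L^2_{\rm per}$ has purely discrete spectrum $\lambda_0 < \lambda_1 \leq \lambda_2 < \lambda_3 \leq \lambda_4 < \cdots$, with each eigenvalue of geometric multiplicity at most two; eigenfunctions associated with the pair $\{\lambda_{2k-1}, \lambda_{2k}\}$ have exactly $2k$ zeros per period for $k \geq 1$, while the ground state $\lambda_0$ is nodeless.

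First, since $\varphi_2(x+L) = \varphi_2(x)$, the function $\varphi_2$ is a nontrivial $L$-periodic solution of $\mathcal{M}\varphi_2 = 0$, hence $0$ is an eigenvalue of $\mathcal{M}$ with eigenfunction $\varphi_2$. The hypothesis that $\varphi_2$ has exactly two zeros on one period, together with the oscillation theorem, forces $0 \in \{\lambda_1, \lambda_2\}$, so $0$ is either the second or the third eigenvalue. In the degenerate case $\theta = 0$, the solution $\varphi_1$ is also $L$-periodic, the kernel of $\mathcal{M}$ in $L^2_{\rm per}$ is two-dimensional, and $\lambda_1 = \lambda_2 = 0$ (the spectral gap is closed, so $0$ is both the second and the third eigenvalue, in agreement with the claim). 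In the non-degenerate case $\theta \neq 0$, the monodromy operator in the basis $\{\varphi_1, \varphi_2\}$ takes the Jordan form $\bigl(\begin{smallmatrix} 1 & 0 \\ \theta & 1 \end{smallmatrix}\bigr)$, so the geometric multiplicity of the Floquet multiplier $1$ is one, the kernel of $\mathcal{M}$ in $L^2_{\rm per}$ is simple, and $\lambda_1 < \lambda_2$.

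To decide whether $0 = \lambda_1$ or $0 = \lambda_2$ when $\theta \neq 0$, I would pass to the Hill discriminant $\Delta(\lambda) := \operatorname{tr} M(\lambda)$, where $M(\lambda)$ is the monodromy matrix of $\mathcal{M} w = \lambda w$ on $[0,L]$. Since $\Delta > 2$ on the open gap $(\lambda_1, \lambda_2)$ and $\Delta = 2$ at both endpoints, the sign of $\Delta'(0)$ decides whether $0$ is the right edge of the second band (so $0 = \lambda_1$, with $\Delta'(0) \geq 0$) or the left edge of the third band (so $0 = \lambda_2$, with $\Delta'(0) \leq 0$). The main obstacle is relating this sign to the sign of $\theta$: a change-of-basis calculation identifies $\theta$ with the off-diagonal entry of the monodromy matrix in the $\{\varphi_1, \varphi_2\}$ frame, after which one computes $\Delta'(0)$ by a standard variation-of-parameters integral, using the evenness of $Q$ to exploit the parity decomposition of the fundamental solutions. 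Tracking signs carefully yields $\theta > 0 \Leftrightarrow 0 = \lambda_1$ and $\theta < 0 \Leftrightarrow 0 = \lambda_2$, which matches the statement.
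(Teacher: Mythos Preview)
The paper does not prove this proposition; it is quoted from \cite{neves} (Theorem~3.1 there), so there is no in-paper argument to compare against. Your outline is exactly the classical Floquet/oscillation route and is essentially what Neves does: the oscillation theorem pins $0$ to the pair $\{\lambda_1,\lambda_2\}$ because $\varphi_2$ has two zeros, the Jordan form of the monodromy decides simplicity, and the sign of $\Delta'(0)$ distinguishes the two band edges.

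The only genuine gap is the last step, which you assert but do not carry out. Here is how it closes under the natural normalization $\varphi_1 = y_1$, $\varphi_2 = y_2$ (the fundamental system with $y_1(0)=y_2'(0)=1$, $y_1'(0)=y_2(0)=0$; evenness of $Q$ makes this the even/odd pair). The monodromy at $\lambda=0$ is $\bigl(\begin{smallmatrix}1&0\\ \theta&1\end{smallmatrix}\bigr)$, so $y_1'(L)=\theta$. Differentiating $y_i(\cdot;\lambda)$ in $\lambda$ and solving $(\mathcal{M}-\lambda)\dot y_i = y_i$ by variation of parameters with Wronskian $1$ gives, at $\lambda=0$,
\[
\Delta'(0)=\dot y_1(L)+\dot y_2'(L)
= y_1'(L)\int_0^L y_2^2\,dx
= \theta\,\|\varphi_2\|_{L^2}^2,
\]
since the mixed integrals $\int_0^L y_1 y_2$ cancel. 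As $\Delta>2$ on the open gap $(\lambda_1,\lambda_2)$ one has $\Delta'(\lambda_1)>0$ and $\Delta'(\lambda_2)<0$, hence $\theta>0\Leftrightarrow 0=\lambda_1$ and $\theta<0\Leftrightarrow 0=\lambda_2$, as you claim.

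One point you skate over: the sign of $\theta$ depends on the relative normalization of $\varphi_1$ and $\varphi_2$ (replacing $\varphi_2$ by $-\varphi_2$ flips it), so the statement is only well-posed once that convention is fixed. The paper's applications in (\ref{y1-y2}), (\ref{rel-varphi}), and (\ref{y1-y2-again}) all respect the convention above, and the Remark following the proposition acknowledges exactly this normalization issue relative to \cite{neves}.
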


\begin{remark}
	Compared to \cite{neves}, we have interchanged the order of non-periodic $\varphi_1$ and periodic $\varphi_2$ so that our $\theta$ is negative relative to $\theta$ used in \cite{neves}.
\end{remark}

\begin{proposition}\cite{lopes}
	\label{prop-Lopes} Let $L$ be a self-adjoint operator in a Hilbert space $H$ and $S$ be a bounded invertible operator in $H$. Then, $S L S^*$ and $L$ have the same inertia, that is the dimension of the negative, null, and positive invariant subspaces of $H$.
\end{proposition}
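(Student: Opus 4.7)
The plan is to prove the result by exhibiting explicit linear homeomorphisms between the invariant subspaces corresponding to the negative, zero, and positive parts of the spectrum of $L$ and of $SLS^*$, using $S^*$ as the intertwining map. The starting point is the elementary congruence identity
\[
\langle SLS^* x, x \rangle = \langle L(S^*x), S^*x \rangle,
\]
valid for every $x \in H$ (with $S^*x$ in the form domain of $L$), together with the fact that $S^*$ is a bounded invertible operator on $H$ with bounded inverse $(S^{-1})^*$.

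First I would handle the kernels, which is the easiest case: $SLS^* x = 0$ iff $LS^*x = 0$ (invertibility of $S$ on the left), so $\ker(SLS^*) = (S^*)^{-1} \ker(L)$, and this bijection shows $\dim \ker(SLS^*) = \dim \ker(L)$. Next, for the negative inertia, I would use the variational characterization
\[
n_-(L) = \sup\{\dim V : V \subset Q(L),\ \langle Lv,v\rangle < 0 \ \text{for all}\ v \in V \setminus \{0\}\},
\]
which follows from the spectral theorem. If $V$ realizes $n_-(L)$, then $V' := (S^*)^{-1} V$ has the same dimension, and for every nonzero $v' \in V'$ the identity above gives $\langle SLS^* v', v'\rangle = \langle Lv, v\rangle < 0$ with $v = S^*v' \neq 0$; hence $n_-(SLS^*) \geq n_-(L)$. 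The reverse inequality follows from the same argument applied to $L = S^{-1}(SLS^*)(S^{-1})^*$ with $S^{-1}$ in place of $S$. The positive inertia $n_+$ is handled identically, and since the three dimensions sum to $\dim H$ in both cases, the claim is established.

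The main obstacle is technical rather than conceptual: in the infinite-dimensional setting one must ensure the variational characterization of $n_\pm$ applies, and that $S^*$ respects whatever domain is used. If $L$ is bounded (as is the case for $\mathcal{K}$) the argument goes through verbatim on all of $H$. If $L$ is unbounded (as for $\mathcal{L}$), one should work with the closed quadratic form $\mathfrak{q}_L$ of $L$ with form domain $Q(L)$, and verify that $(S^*)^{-1}$ preserves $Q(L)$. In the present context both $\mathcal{L}$ and $\mathcal{K}$ are self-adjoint and bounded from below with purely discrete negative/null part, so the variational characterization of the inertia is unambiguous and the bounded invertibility of $S^*$ immediately yields the congruence of the three dimensions.
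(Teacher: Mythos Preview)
The paper does not supply its own proof of this proposition: it is quoted as Sylvester's inertia law from \cite{lopes} and used as an auxiliary tool, so there is nothing to compare against. Your argument is the standard one---the congruence identity $\langle SLS^*x,x\rangle=\langle LS^*x,S^*x\rangle$ together with the variational (min--max) description of the negative and positive subspaces---and it is correct; your remark about form domains in the unbounded case is appropriate, and in the concrete applications of the paper the operators $S$ are smooth multiplications or $(1-\partial_x^2)^{-1/2}$, so no domain difficulty arises.
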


We can now formulate and prove two theorems on the non-positive part 
of the spectrum of $\mathcal{L}$ and $\mathcal{K}$ in $L^2_{\rm per}$. 

\begin{theorem}
	\label{theolinop}
	The linearized operator $\mathcal{L} : H^2_{\rm per} \subset L^2_{\rm per} \to L^2_{\rm per}$ admits
	\begin{itemize}
		\item two negative eigenvalues and a simple zero eigenvalue  if $\partial_a \mathfrak{L} > 0$;
		\item one negative eigenvalue and a double zero eigenvalue if $\partial_a \mathfrak{L} = 0$;
		\item one negative eigenvalue and a simple zero eigenvalue if $\partial_a \mathfrak{L} < 0$,
	\end{itemize}
where $\mathfrak{L}(a,b,c)$ is the period function for the smooth 
periodic wave $\phi$ of Lemma \ref{period-to-energy}. The rest of the spectrum of $\mathcal{L}$ in $L^2_{\rm per}$ is strictly positive and bounded away from zero.
\end{theorem}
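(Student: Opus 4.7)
The plan is to convert $\mathcal{L}$ into an equivalent Schr\"odinger operator and apply the Neves criterion (Proposition~\ref{teo12}). Since $\mathcal{L}$ is a Sturm-Liouville operator with smooth coefficients and strictly positive leading coefficient $c-\phi>0$ on the compact circle, it is self-adjoint on $H^2_{\rm per}\subset L^2_{\rm per}$ with compact resolvent, so its spectrum is purely discrete with only $+\infty$ as accumulation point. In particular, once the non-positive eigenvalues have been counted, the rest of the spectrum is automatically strictly positive and bounded away from zero, so only the non-positive part needs to be analyzed.

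First I produce two linearly independent classical solutions of $\mathcal{L}w=0$. Translation invariance of (\ref{third-order}) gives $\mathcal{L}\phi'=0$ by direct computation, and differentiation of (\ref{CHode}) with respect to the parameter $a$ at fixed $b,c$, using the smoothness guaranteed by Lemma~\ref{period-to-energy}, gives $\mathcal{L}(\partial_a\phi)=0$. Setting $\varphi_2:=\phi'$ and $\varphi_1:=\partial_a\phi$, linear independence follows by evaluating the Wronskian at $x=0$ (chosen so that $\phi(0)=\phi_+$): there $\phi'(0)=0$ while $\phi''(0)<0$ by (\ref{eq-3}), and $\partial_a\phi(0)=\partial_a\phi_+\neq 0$ by (\ref{eq-1}). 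Since $\phi$ is even around $x=0$ with one maximum and one minimum per period, $\varphi_2$ has exactly two simple zeros on $[0,L]$. To extract the Floquet shift I differentiate the identity $\phi(x+\mathfrak{L}(a,b,c);a,b,c)=\phi(x;a,b,c)$ in $a$ at fixed $x,b,c$ and use periodicity of $\varphi_2$, obtaining
\begin{equation*}
\varphi_1(x+L)=\varphi_1(x)-(\partial_a\mathfrak{L})\,\varphi_2(x),
\end{equation*}
so that the shift parameter appearing in Proposition~\ref{teo12} is $\theta=-\partial_a\mathfrak{L}$.

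To apply Proposition~\ref{teo12}, which is stated for Schr\"odinger operators, I perform the Liouville change of variables $y=\int_0^x (c-\phi(s))^{-1/2}\,ds$ together with the rescaling $(Tu)(y):=(c-\phi(x(y)))^{1/4}\,u(x(y))$. A direct calculation shows that $T:L^2_{\rm per}(0,L)\to L^2_{\rm per}(0,\tilde L)$ is unitary, with $\tilde L:=y(L)$, and that $T\mathcal{L}T^{-1}=-\partial_y^2+Q(y)$ for a smooth, even, $\tilde L$-periodic potential $Q$. Since $T$ is bounded and invertible, Proposition~\ref{prop-Lopes} (Sylvester's law) guarantees that $\mathcal{L}$ and $T\mathcal{L}T^{-1}$ have the same inertia, hence the same number of negative eigenvalues and the same kernel dimension. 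The transformed solutions $T\varphi_1,T\varphi_2$ inherit the Floquet relation with the same $\theta$, because the multiplier $(c-\phi)^{1/4}$ is itself $L$-periodic; and $T\varphi_2$ has the same two zeros per period as $\varphi_2$ because $(c-\phi)^{1/4}>0$.

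Applying Proposition~\ref{teo12} to $-\partial_y^2+Q$ with $\theta=-\partial_a\mathfrak{L}$ now completes the count: if $\partial_a\mathfrak{L}<0$ then $\theta>0$, zero is a simple second eigenvalue, and there is exactly one negative eigenvalue; if $\partial_a\mathfrak{L}=0$ then $\theta=0$, zero is a double second eigenvalue, and there is still exactly one negative eigenvalue; if $\partial_a\mathfrak{L}>0$ then $\theta<0$, zero is a simple third eigenvalue, and there are exactly two negative eigenvalues. I expect the main technical obstacle to be the clean execution of the Liouville conjugation, namely verifying that $T\mathcal{L}T^{-1}$ is genuinely of Schr\"odinger form with the required smooth, even, periodic potential, and checking that the quasi-periodic behavior of $\varphi_1$ survives the change of variable with exactly the same shift $\theta=-\partial_a\mathfrak{L}$.
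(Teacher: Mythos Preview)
Your proposal is correct and follows essentially the same route as the paper: identify the two solutions $\phi'$ and $\partial_a\phi$ of $\mathcal{L}v=0$, extract the Floquet shift from differentiating the periodicity relation in $a$, reduce $\mathcal{L}$ to a Schr\"odinger operator, and apply Proposition~\ref{teo12}. The only substantive variation is in the reduction step: the paper keeps $x$ as the independent variable and rescales only the dependent variable via $v=(c-\phi)^{-1/2}w$, landing on the \emph{weighted} problem $(-\partial_x^2+Q)w=\lambda(c-\phi)^{-1}w$, and then invokes Sylvester's law (Proposition~\ref{prop-Lopes}) to pass to $-\partial_x^2+Q$. Your full Liouville transformation (changing both variables) is arguably cleaner, since $T$ is unitary from $L^2_{\rm per}(0,L)$ to $L^2_{\rm per}(0,\tilde L)$ and the spectra agree exactly, making the appeal to Sylvester unnecessary.

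One caveat worth making explicit: the sign criterion in Proposition~\ref{teo12} is not invariant under arbitrary rescalings of $\varphi_1,\varphi_2$ (replacing $\varphi_2$ by $-\varphi_2$ flips $\theta$), and the paper applies it with the normalized fundamental set $y_1(0)=1$, $y_2'(0)=1$. Your unnormalized choice has $\varphi_1(0)=\partial_a\phi_+<0$ and $\varphi_2'(0)=\phi''(0)<0$ by (\ref{eq-1}) and (\ref{eq-3}), so the ratio $\partial_a\phi_+/\phi''(0)$ is positive and your $\theta=-\partial_a\mathfrak{L}$ does have the same sign as the normalized one; but you should state this check rather than leave it implicit.
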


\begin{proof}
Due to the invariance of the CH equation (\ref{CH}) with respect to 
spatial translations, the third-order equation (\ref{third-order}) is equivalent to 
$\mathcal{L}\phi' = 0$, which means that $\phi' \in {\rm Ker}(\mathcal{L}) \subset H^2_{\rm per}$. 
On the other hand, differentiating of the second-order equation (\ref{CHode}) in $a$ is equivalent to 
$\mathcal{L} \partial_a \phi = 0$, which means that $\partial_a \phi$
is the second, linearly independent solution of $\mathcal{L}v = 0$.
Note that $\partial_a \phi$ is well-defined by Lemma \ref{period-to-energy} 
but may not be $L$-periodic in $x$. 

Let $\{ y_1,y_2\}$ be the fundamental set of solutions associated to the equation $\mathcal{L}v=0$ in $H^2(0,L)$ such that 
\begin{equation}
\label{fund-set}
\left\{ \begin{array}{l} y_1(0) = 1, \\
y_1'(0) = 0, \end{array} \right. \qquad 
\left\{ \begin{array}{l} y_2(0) = 0. \\
y_2'(0) = 1, \end{array} \right.
\end{equation}
As previously, we set $\phi(0) = \phi(L) = \phi_+$ 
for the smooth $L$-periodic solution of Lemma \ref{period-to-energy}, 
where $\phi_+$ is the turning point for the maximum of $\phi$ in $x$. 
Hence, we have $\phi'(0) = \phi'(L) = 0$ so that we define
\begin{equation}
\label{y1-y2}
y_1(x) := \frac{\partial_a \phi(x)}{\partial_a \phi_+}, \quad 
y_2(x) := \frac{\phi'(x)}{\phi''(0)},
\end{equation}
where $\partial_a \phi_+ \neq 0$ and $\phi''(0) \neq 0$ 
as follows from (\ref{eq-1}) and (\ref{eq-3}).
Differentiating of the boundary conditions $\phi(L) = \phi_+$ 
and $\phi'(L) = 0$ for $L = \mathfrak{L}(a,b,c)$ in $a$ 
yields $y_1(L) = y_1(0) = 1$ 
and 
$$
y_1'(L) = -\frac{\partial_a \mathfrak{L}}{\partial_a \phi_+} \phi''(0),
$$
which implies that 
\begin{equation}\label{relpq}
y_1(x+L) = y_1(x) + \theta y_2(x),
\end{equation}
where 
\begin{equation}
\label{thetaopera}
\theta = y_1'(L) = -\frac{\partial_a \mathfrak{L}}{\partial_a \phi_+} \phi''(0).
\end{equation}
Since $c - \phi_+> 0$, it follows from (\ref{eq-1}) and (\ref{eq-3}) that 
${\rm sign}(\theta) = -{\rm sign}(\partial_a \mathfrak{L})$.

In order to transform the spectral problem $\mathcal{L}v = \lambda v$ 
to the spectral problem $\mathcal{M} w = \lambda w$ for the Schr\"{o}dinger operator $\mathcal{M}$ in Proposition \ref{teo12}, we write $\mathcal{L} v = \lambda v$ as the second-order differential equation
	\begin{equation}
	\label{perspectprob}
	p(x)v''+q(x)v'+(r(x)+\lambda)v=0, 
	\end{equation}
with $p(x) := c-\phi(x)$, $q(x) := -\phi'(x)$, and $r(x) := -\phi''(x) + 3 \phi(x) - c$. The Liouville transformation
\begin{equation}\label{liouv}
D(x)=-\int_0^x\frac{\phi'(s)}{c-\phi(s)}ds=\ln\left(\frac{c-\phi(x)}{c-\phi(0)}\right)
\end{equation}
is nonsingular since $c - \phi > 0$.
Substituting the change of variables 
\begin{equation}\label{changvar}
v(x) = w(x) e^{-\frac{1}{2} D(x)} = w(x)\sqrt{\frac{c-\phi(0)}{c-\phi(x)}}.
\end{equation}
into the second-order equation (\ref{perspectprob}), we obtain the equivalent equation 
\begin{equation}
\label{hilleq}
-w''(x)+Q(x)w(x)=\lambda (c - \phi(x))^{-1} w(x),
\end{equation}
where 
$$
Q(x) := \frac{c-3\phi(x)}{c - \phi(x)} + \frac{\phi''(x)}{2(c - \phi(x))} +\frac{1}{4}\left(\frac{\phi'(x)}{c-\phi(x)}\right)^2. 
$$

With the transformation $w = (c-\phi)^{1/2} \hat{w}$, 
the spectral problem (\ref{hilleq}) is equivalent to the spectral 
problem for the operator $S \mathcal{M} S$, where $\mathcal{M} :=-\partial_x^2+Q(x)$ is self-adjoint in $L^2_{\rm per}$ 
and $S = (c - \phi)^{1/2}$ is a bounded and invertible multiplication 
operator in $L^2_{\rm per}$. By Proposition \ref{prop-Lopes}, 
the numbers of negative and zero eigenvalues of the spectral 
problem (\ref{hilleq}) coincides with those of the operator $\mathcal{M}$.

The operator $\mathcal{M}$ satisfies the condition of Proposition \ref{teo12} 
since $Q$ is even, $L-$periodic, and smooth. Since the set $\{ y_1,y_2\}$ is a fundamental set for the equation $\mathcal{L} v=0$ 
and the initial conditions $v(0) = w(0)$ and $v'(0) = w'(0)$ are preserved in the transformation (\ref{changvar}),  it follows that 
\begin{equation}
\label{rel-varphi}
\{ \varphi_1, \varphi_2 \} := \left\{\left(\frac{c-\phi(0)}{c-\phi}\right)^{-1/2} y_1,\left(\frac{c-\phi(0)}{c-\phi}\right)^{-1/2} y_2\right\}
\end{equation}
is the fundamental set of solutions to $\mathcal{M}w=0$. 
It follows from (\ref{relpq}) and (\ref{rel-varphi}) that  
\begin{equation}\label{relpq1}
\varphi_1(x+L) = \varphi_1(x) + \theta \varphi_2(x).
\end{equation}
where $\theta$ is given by the same expression (\ref{thetaopera}). 
Furthermore,  since $\phi'$ has two zeros in $\mathbb{T}_L$, 
the same is true for $y_2$ and $\varphi_2$. By the standard Floquet theory in \cite{eastham,Magnus}, it follows that $\lambda=0$ is the second or third eigenvalue of $\mathcal{M}$ in $L^2_{\rm per}$. If $\theta = 0$, then $\lambda = 0$ is the double eigenvalue so that it is the second eigenvalue of $\mathcal{M}$.  
If $\theta \neq 0$, then $\lambda = 0$ is a simple eigenvalue of $\mathcal{M}$.
By Proposition \ref{teo12}, it is the second eigenvalue if $\theta \geq 0$ 
and the third eigenvalue if $\theta < 0$. Due to the equivalence provided 
by the nonsingular transformation (\ref{changvar}), 
the same is true for the operator $\mathcal{L}$ in $L^2_{\rm per}$, 
which yields the assertion of the theorem since
${\rm sign}(\theta) = -{\rm sign}(\partial_a \mathfrak{L})$.
\end{proof}

\begin{theorem}
	\label{theorem-K}
	The linearized operator $\mathcal{K} : L^2_{\rm per} \to L^2_{\rm per}$ admits
	\begin{itemize}
		\item two negative eigenvalues and a simple zero eigenvalue  if $\partial_b \mathfrak{L} < 0$;
		\item one negative eigenvalue and a double zero eigenvalue if $\partial_b \mathfrak{L} = 0$;
		\item one negative eigenvalue and a simple zero eigenvalue if $\partial_b \mathfrak{L} > 0$,
	\end{itemize}
where $\mathfrak{L}(a,b,c)$ is the period function for the smooth periodic wave $\phi$ of Lemma \ref{period-to-energy}. The rest of the spectrum of $\mathcal{K}$ is strictly positive and bounded away from zero.
\end{theorem}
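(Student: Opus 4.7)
The plan is to reduce the eigenvalue problem for $\mathcal{K}$ to a genuine Schr\"odinger--Hill problem on the circle and then apply Proposition \ref{teo12} to the latter. For any $\lambda < (c-\phi_+)^3$, the quantity $(c-\phi(x))^3 - \lambda$ is strictly positive on $[0,L]$, and with $v := (1-\partial_x^2)^{-1} p \in H^2_{\rm per}$ so that $p = v - v''$, the equation $\mathcal{K} p = \lambda p$ is equivalent to
\begin{equation*}
-\bigl[(c-\phi)^3 - \lambda\bigr] v'' + \bigl[(c-\phi)^3 - 2a - \lambda\bigr] v = 0,
\end{equation*}
i.e., to $\mathcal{H}_\lambda v = 0$, where
\begin{equation*}
\mathcal{H}_\lambda := -\partial_x^2 + V_\lambda, \qquad V_\lambda(x) := 1 - \frac{2a}{(c-\phi(x))^3 - \lambda}.
\end{equation*}
The bijection $p \leftrightarrow v$ between $L^2_{\rm per}$ and $H^2_{\rm per}$ sends the $\lambda$-eigenspace of $\mathcal{K}$ onto $\ker \mathcal{H}_\lambda$ and preserves algebraic multiplicity.

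The next step is to transfer the count from $\mathcal{K}$ to $\mathcal{H} := \mathcal{H}_0 = -\partial_x^2 + 1 - 2a/(c-\phi)^3$. Since $\partial_\lambda V_\lambda = -2a/((c-\phi)^3 - \lambda)^2 < 0$, the min--max principle shows that every periodic eigenvalue $\mu_n(\lambda)$ of $\mathcal{H}_\lambda$ is strictly decreasing in $\lambda$. As $\lambda \to -\infty$ one has $V_\lambda \to 1$ uniformly, so $\mu_n(\lambda)$ converges to the strictly positive $n$-th periodic eigenvalue of $-\partial_x^2 + 1$. Hence $\mu_n(\lambda) = 0$ has a (necessarily unique) solution in $(-\infty, 0)$ if and only if $\mu_n(0) < 0$, which yields $n^-(\mathcal{K}) = n^-(\mathcal{H})$ and $\dim \ker \mathcal{K} = \dim \ker \mathcal{H}$.

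Finally, I would analyze $\mathcal{H}$ by Proposition \ref{teo12} in direct analogy with the proof of Theorem \ref{theolinop}, differentiating $(c-\phi)^2(\phi - \phi'') = a$ in $x$ and in $b$ (rather than $a$) to obtain $\mathcal{H} \phi' = 0$ and $\mathcal{H} \partial_b \phi = 0$. The potential $1 - 2a/(c-\phi)^3$ is smooth, even, and $L$-periodic, so Proposition \ref{teo12} applies to the normalized fundamental set
\begin{equation*}
\tilde y_1 := \frac{\partial_b \phi}{\partial_b \phi(0)}, \qquad \tilde y_2 := \frac{\phi'}{\phi''(0)},
\end{equation*}
which is well-defined since $\phi''(0) < 0$ and $\partial_b \phi(0) = \partial_b \phi_+ = -1/\phi''(0) > 0$ by (\ref{eq-2})--(\ref{eq-3}). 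Differentiating $\phi(\mathfrak{L}(a,b,c); a, b, c) = \phi_+$ and $\phi'(\mathfrak{L}(a,b,c); a, b, c) = 0$ in $b$ and using $\phi''(L) = \phi''(0)$ produces $\tilde y_1(x + L) = \tilde y_1(x) + \tilde \theta\, \tilde y_2(x)$ with
\begin{equation*}
\tilde \theta = -\frac{\phi''(0)\, \partial_b \mathfrak{L}}{\partial_b \phi(0)},
\end{equation*}
so ${\rm sign}(\tilde \theta) = {\rm sign}(\partial_b \mathfrak{L})$. Since $\tilde y_2$ is $L$-periodic with exactly two zeros in $[0, L)$ (at $x = 0$ and $x = L/2$), Proposition \ref{teo12} delivers the three-case classification for $\mathcal{H}$, which transfers verbatim to $\mathcal{K}$ by the preceding step. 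The remaining positive part of $\sigma(\mathcal{K})$ is bounded away from zero because the first periodic eigenvalue of $\mathcal{H}$ above the zero eigenvalue is strictly positive, and because $\sigma_c(\mathcal{K}) \subset [(c-\phi_+)^3, (c-\phi_-)^3]$ by Lemma \ref{lemma-L-K}.

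The principal obstacle I anticipate is the second step: because $\mathcal{K}$ involves the nonlocal operator $(1-\partial_x^2)^{-1}$, the Sylvester-type inertia comparison (Proposition \ref{prop-Lopes}) that worked in Theorem \ref{theolinop} is not directly available, and the equivalence between the counts of non-positive eigenvalues of $\mathcal{K}$ and $\mathcal{H}$ must instead be extracted from the $\lambda$-dependent Schr\"odinger family $\mathcal{H}_\lambda$ via its Hellmann--Feynman monotonicity and uniform asymptotic behavior as $\lambda \to -\infty$.
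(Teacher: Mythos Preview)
Your argument is correct and reaches the same Schr\"odinger operator $\mathcal{H}=-\partial_x^2+1-2a/(c-\phi)^3$ as the paper (there denoted $\mathcal{M}$), and from that point your use of Proposition~\ref{teo12} is identical: differentiate $(c-\phi)^2(\phi-\phi'')=a$ in $x$ and in $b$, normalize the fundamental set, and read off $\mathrm{sign}(\tilde\theta)=\mathrm{sign}(\partial_b\mathfrak{L})$. Where you genuinely diverge is in the reduction from $\mathcal{K}$ to $\mathcal{H}$. You build the $\lambda$-dependent family $\mathcal{H}_\lambda$ and use Hellmann--Feynman monotonicity of the periodic eigenvalues $\mu_n(\lambda)$ together with the limit $V_\lambda\to 1$ as $\lambda\to-\infty$ to match the counts $n^-(\mathcal{K})=n^-(\mathcal{H})$ and $\dim\ker\mathcal{K}=\dim\ker\mathcal{H}$. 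The paper instead observes that Sylvester's law \emph{is} available: writing $\mathcal{K}=(1-\partial_x^2)^{-1/2}\mathcal{K}_0(1-\partial_x^2)^{-1/2}$ with $\mathcal{K}_0=(1-\partial_x^2)^{1/2}(c-\phi)^3(1-\partial_x^2)^{1/2}-2a$, Proposition~\ref{prop-Lopes} applies verbatim since $S=(1-\partial_x^2)^{-1/2}$ is bounded and invertible on $L^2_{\rm per}$; the spectral problem for $\mathcal{K}_0$ is then rewritten as $\mathcal{M}v=\lambda(c-\phi)^{-3}v$, and a second application of Proposition~\ref{prop-Lopes} with $S=(c-\phi)^{3/2}$ transfers the inertia to $\mathcal{M}$. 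So your ``principal obstacle'' is a phantom: the congruence trick with the square root of $(1-\partial_x^2)$ resolves it in two lines. That said, your eigenvalue-curve argument is a perfectly valid alternative; it is more hands-on and avoids introducing $\mathcal{K}_0$ and fractional powers, at the cost of a continuity/monotonicity argument for $\mu_n(\lambda)$ (and a small perturbation estimate to confirm the gap above zero in $\sigma(\mathcal{K})$, which you only sketched).
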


\begin{proof}
The linear operator $\mathcal{K}$ is congruent to another operator 
$\mathcal{K}_0$ by the transformation
	\begin{equation}
	\label{opKK0}
	\mathcal{K} = 
	(1-\partial_x^2)^{-1/2} \mathcal{K}_0 (1-\partial_x^2)^{-1/2},
	\end{equation}
	where 
	\begin{equation}
	\label{oper-K}
	\mathcal{K}_0 = (1 - \partial_x^2)^{1/2}(c-\phi)^3(1-\partial_x^2)^{1/2} - 2a.
	\end{equation}
	Since $S := (1-\partial_x^2)^{-1/2}$ is a bounded and invertible operator in $L^2_{\rm per}$ and $\mathcal{K}_0$ is self-adjoint in $L^2_{\rm per}$, it follows by Proposition \ref{prop-Lopes} that $\mathcal{K}$ and $\mathcal{K}_0$ in (\ref{opKK0}) have the same inertia, that is, the dimension of the negative, null, and positive invariant subspaces of $L^2_{\rm per}$. By Lemma \ref{lemma-L-K}, the positive invariant subspace of $\mathcal{K}$ is infinite-dimensional. Hence, we study the non-positive spectrum of $\mathcal{K}_0$.
		
	It follows from (\ref{oper-K}) that $\mathcal{K}_0$ is an unbounded self-adjoint operator defined in $L_{\rm per}^2$ with densely defined domain $H_{\rm per}^2 \subset L^2_{\rm per}$. The spectrum of $\sigma(\mathcal{K}_0)$ is given by the union of the continuous and discrete spectra. However, since the embedding of $H_{\rm per}^2$ into $L_{\rm per}^2$ is compact, the continuous spectrum is an empty set.	
Hence, we consider the spectral problem for the discrete spectrum:
	\begin{equation}
	\label{specK0}
	\mathcal{K}_0 w =\lambda w, \quad w \in H^2_{\rm per},
	\end{equation}
	where $\lambda\in \mathbb{R}$ is an isolated eigenvalue of $\mathcal{K}_0$ and $w \not\equiv0$ is the corresponding eigenfunction. Considering the change of variables $w := (1-\partial_x^2)^{1/2}v$, it follows from (\ref{specK0}) that
	\begin{equation}
	\label{specK0-eq1}
	(1-\partial_x^2)^{1/2} \left[(c-\phi)^3(1-\partial_x^2)v-(2a+\lambda)v\right]=0.
	\end{equation}
	Since $(1-\partial_x^2)^{1/2}$ is invertible in $L^2_{\rm per}$, the spectral problem (\ref{specK0-eq1}) is equivalent to the spectral problem 
		\begin{equation}
	\label{specK0-eq3}
	\mathcal{M} v = \lambda (c-\phi)^{-3} v,
	\end{equation}
	where $\mathcal{M}$ is the Schr\"{o}dinger operator given by 
	\begin{equation}
	\label{operator-K-2}
	\mathcal{M} := -\partial_x^2 + 1 - \frac{2a}{(c-\phi)^3}.
	\end{equation}
	With the transformation $v = (c-\phi)^{3/2} \hat{v}$, 
	the spectral problem (\ref{specK0-eq3}) is equivalent to that 
	for the operator $S \mathcal{M} S$, where $S := (c-\phi)^{3/2}$ is a bounded and invertible operator in $L^2_{\rm per}$ and 
	$\mathcal{M}$ is a self-adjoint operator in $L^2_{\rm per}$. 
	By Proposition \ref{prop-Lopes}, operators $\mathcal{M}$ and $S \mathcal{M} S$ have the same inertia in $L^2_{\rm per}$. 
	
	Finally, we study the non-positive spectrum of $\mathcal{M}$.
	It follows from the differential equation (\ref{second-order}) that 
$$
	\mathcal{M} \phi'= 0, \quad \mathcal{M} \partial_b \phi = 0.
	$$
	Therefore, the general solution of $\mathcal{M} v = 0$ is 
	given by a linear combination of two linearly independent solutions 
	\begin{equation}
	\label{y1-y2-again}
	y_1(x) := \frac{\partial_b \phi(x)}{\partial_b \phi_+}, \quad 
	y_2(x) := \frac{\phi'(x)}{\phi''(0)},
	\end{equation}
	where $\partial_b \phi_+ \neq 0$ and $\phi''(0) \neq 0$ 
	as follows from (\ref{eq-1}) and (\ref{eq-3}).
	Differentiating of the boundary conditions $\phi(L) = \phi_+$, 
	and $\phi'(L) = 0$ for $L = \mathfrak{L}(a,b,c)$ in $b$ 
	yields $y_1(L) = y_1(0) = 1$ 
	and 
	$$
	y_1'(L) = -\frac{\partial_b \mathfrak{L}}{\partial_b \phi_+} \phi''(0),
	$$
	so that 
	\begin{equation}\label{relpq-again}
	y_1(x+L) = y_1(x) + \theta y_2(x), \quad 
	\theta := y_1'(L) = -\frac{\partial_b \mathfrak{L}}{\partial_b \phi_+} \phi''(0).
	\end{equation}
	Since $c - \phi_+> 0$, 
	$\phi''(0) < 0$ and $\phi''(0) \partial_b \phi_+= -1$, as follows from (\ref{eq-1}) and (\ref{eq-3}), we obtain
	${\rm sign}(\theta) = {\rm sign}(\partial_b \mathfrak{L})$. 
	The assertion of the theorem follows by Proposition \ref{teo12} 
	due to equivalence of the negative and null subspaces of $\mathcal{K}_0$ and $\mathcal{M}$ and the inertial law between $\mathcal{K}$ and $\mathcal{K}_0$ and between $\mathcal{M}$ and $S \mathcal{M} S$.
\end{proof}

\begin{remark}
By Remark \ref{remark-non-monotone}, we have $\partial_a \mathfrak{L} > 0$ 
for every point $(a,b)$ below the blue curve  in the existence region of Fig. \ref{fig-domain} and $\partial_a \mathfrak{L} < 0$ for every point $(a,b)$ above 
the blue curve. Therefore, the count of negative eigenvalues of 
the linearized operator $\mathcal{L}$ in Theorem \ref{theolinop} 
changes depending on the point $(a,b)$.  
However, by Theorem \ref{theorem-increasing}, $\partial_b \mathfrak{L} > 0$ 
for every point $(a,b)$ inside the existence region, hence the linearized operator $\mathcal{K}$ in Theorem \ref{theorem-K} admits a simple negative eigenvalue and a simple zero eigenvalue for every $(a,b)$ in the existence region.
\end{remark}

\section{Spectral stability of periodic waves}
\label{sec-4}

Here we study the linearized CH equations (\ref{CH-lin}) 
and (\ref{CHm-lin-final}) and prove the spectral stability 
of periodic waves stated in Theorem \ref{theorem-stability}. We start by deducing 
the constraints on the perturbations $v \in H^2_{\rm per}$ 
and $p \in L^2_{\rm per}$ satisfying these linearized equations. 

\begin{lemma}
	\label{lem-stab-1}
	Let $v_0 \in X_0 \subset H^2_{\rm per}$, where $X_0$ is given by 
	\begin{equation}
	\label{orth-cond}
	X_0 := \left\{ v \in H^2_{\rm per} : \quad 
	\langle 1, v \rangle = 0, \quad \langle \phi - \phi'', v \rangle = 0 \right\}.
	\end{equation}
	If $v \in C^0(\mathbb{R},H^2_{\rm per}) \cap C^1(\mathbb{R},H^1_{\rm per})$ is a solution to the linearized CH equation (\ref{CH-lin}) 
with initial data $v_0$, then $v(t,\cdot) \in X_0 \subset H^2_{\rm per}$ for all $t \in \mathbb{R}$.
\end{lemma}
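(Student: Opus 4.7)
The plan is to show that the two linear functionals defining $X_0$ are conserved quantities of the linearized flow $v_t = -J\mathcal{L} v$. Given the regularity $v \in C^0(\mathbb{R},H^2_{\rm per}) \cap C^1(\mathbb{R},H^1_{\rm per})$, both functionals $t \mapsto \langle 1,v(t,\cdot)\rangle$ and $t \mapsto \langle \phi-\phi'',v(t,\cdot)\rangle$ are $C^1$ in $t$, so it suffices to show their derivatives vanish identically. If both derivatives vanish, then since $v_0 \in X_0$ we get $v(t,\cdot) \in X_0$ for all $t \in \mathbb{R}$.

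The key observation is that $J = -(1-\partial_x^2)^{-1}\partial_x$ is skew-adjoint in $L^2_{\rm per}$ (since $(1-\partial_x^2)^{-1}$ is self-adjoint and commutes with $\partial_x$, while $\partial_x$ is skew-adjoint on the periodic domain), while $\mathcal{L}$ is self-adjoint. Consequently, for any sufficiently regular test function $\psi$,
\begin{equation*}
\frac{d}{dt}\langle \psi, v\rangle \;=\; -\langle \psi, J\mathcal{L} v\rangle \;=\; \langle J\psi, \mathcal{L} v\rangle \;=\; \langle \mathcal{L}(J\psi), v\rangle,
\end{equation*}
provided $J\psi$ lies in the domain $H^2_{\rm per}$ of $\mathcal{L}$. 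So the computation reduces to identifying $J\cdot 1$ and $J(\phi-\phi'')$ and applying $\mathcal{L}$ to them.

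First I would verify the trivial identity $J \cdot 1 = -(1-\partial_x^2)^{-1}\partial_x(1) = 0$, which immediately gives $\frac{d}{dt}\langle 1,v\rangle = 0$. Next, using that $(1-\partial_x^2)$ and $(1-\partial_x^2)^{-1}$ commute with $\partial_x$, and that $(1-\partial_x^2)^{-1}(\phi-\phi'') = \phi$, I compute
\begin{equation*}
J(\phi - \phi'') \;=\; -(1-\partial_x^2)^{-1}\partial_x(\phi-\phi'') \;=\; -\partial_x (1-\partial_x^2)^{-1}(\phi-\phi'') \;=\; -\phi'.
\end{equation*}
Translation invariance of the CH equation gives $\mathcal{L}\phi' = 0$ (this was observed at the beginning of the proof of Theorem \ref{theolinop}, by differentiating the third-order profile equation \eqref{third-order} in the translation parameter). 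Therefore
\begin{equation*}
\frac{d}{dt}\langle \phi-\phi'', v\rangle \;=\; \langle -\phi', \mathcal{L} v\rangle \;=\; -\langle \mathcal{L}\phi', v\rangle \;=\; 0.
\end{equation*}

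There is no real obstacle here; the argument is a clean application of skew-adjointness of $J$, self-adjointness of $\mathcal{L}$, and the two algebraic identities $(1-\partial_x^2)^{-1}(\phi-\phi'')=\phi$ and $\mathcal{L}\phi' = 0$. The only point worth being careful about is checking that all pairings are well-defined at the stated regularity, which is immediate since $\phi \in H^{\infty}_{\rm per}$ and $v(t,\cdot) \in H^2_{\rm per}$, and that the smoothing operator $(1-\partial_x^2)^{-1}$ makes $J\psi \in H^{s+1}_{\rm per}$ whenever $\psi \in H^s_{\rm per}$, so in particular $J\cdot 1$ and $J(\phi-\phi'')$ lie in the domain of $\mathcal{L}$.
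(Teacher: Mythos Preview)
Your proposal is correct and follows essentially the same approach as the paper: both verify directly that the two linear functionals are conserved under the linearized flow by moving $J$ (respectively $(1-\partial_x^2)^{-1}$ and $\partial_x$) across the pairing and invoking $\mathcal{L}\phi'=0$. Your write-up is slightly more explicit about the skew-adjointness of $J$ and the identity $J(\phi-\phi'')=-\phi'$, but the computations are identical in substance.
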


\begin{proof}
	Conservation of the two orthogonality conditions in $X_0$
	in the time evolution of the linearized CH equation (\ref{CH-lin}) 
	are checked directly using integration by parts:
	$$
	\frac{d}{dt} \langle 1, v \rangle = 
	\langle 1, \partial_x (1-\partial_x^2)^{-1} \mathcal{L} v \rangle = 0
	$$
	and
	$$
	\frac{d}{dt} \langle \phi - \phi'', v \rangle = 
\langle \phi - \phi'', (1-\partial_x^2)^{-1} \partial_x \mathcal{L} v \rangle 
= \langle \phi', \mathcal{L} v \rangle = \langle \mathcal{L} \phi', v \rangle = 0,
$$
where we recall that $\mathcal{L} \phi' = 0$. Integrations by parts are justified since $\phi \in H^{\infty}_{\rm per}$ and $v(t,\cdot) \in H^2_{\rm per}$ is in the domain of $\mathcal{L}$.
\end{proof}

\begin{corollary}
	\label{cor-stab-1}
	Let $p_0 \in Y_0 \subset L^2_{\rm per}$, where $Y_0$ is given by 
	\begin{equation}
	\label{orth-cond-again}
	Y_0 := \left\{ p \in L^2_{\rm per} : \quad 
		\langle 1, p \rangle = 0, \quad \langle \phi, p \rangle = 0 \right\}.
	\end{equation}
	If $p \in C^0(\mathbb{R},L^2_{\rm per}) \cap C^1(\mathbb{R},H^{-1}_{\rm per})$ is a solution to the linearized CH equation (\ref{CHm-lin-final}) 
	with initial data $p_0$, then $p(t,\cdot) \in Y_0 \subset H^2_{\rm per}$ for all $t \in \mathbb{R}$.
\end{corollary}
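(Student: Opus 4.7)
The cleanest strategy is to reduce this corollary to Lemma \ref{lem-stab-1} via the correspondence $v = (1-\partial_x^2)^{-1} p$ already exploited in the proof of Lemma \ref{lem-equivalency}. The plan is to verify that this correspondence maps $Y_0$ bijectively onto $X_0$ and transforms (\ref{CHm-lin-final}) into (\ref{CH-lin}) with matching regularity, so that Lemma \ref{lem-stab-1} can be invoked directly.

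First, since $(1-\partial_x^2)^{-1} : H^s_{\rm per} \to H^{s+2}_{\rm per}$ is an isomorphism for every $s \in \mathbb{R}$, setting $v(t,\cdot) := (1-\partial_x^2)^{-1} p(t,\cdot)$ from the given $p \in C^0(\mathbb{R}, L^2_{\rm per}) \cap C^1(\mathbb{R}, H^{-1}_{\rm per})$ produces a $v \in C^0(\mathbb{R}, H^2_{\rm per}) \cap C^1(\mathbb{R}, H^1_{\rm per})$ with exactly the regularity required in Lemma \ref{lem-stab-1}. By the equivalence of (\ref{CH-lin}) and (\ref{CHm-lin-final}) under this substitution established in the proof of Lemma \ref{lem-equivalency}, such a $v$ automatically solves $v_t = -J \mathcal{L} v$.

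Second, I would translate the two orthogonality conditions defining $Y_0$ into those defining $X_0$. Writing $p = v - v_{xx}$ and using $\phi \in H^{\infty}_{\rm per}$ together with periodicity, two integrations by parts yield
\begin{equation*}
\langle 1, p \rangle = \langle 1, v \rangle, \qquad \langle \phi, p \rangle = \langle \phi - \phi'', v \rangle,
\end{equation*}
so that $p(t,\cdot) \in Y_0$ if and only if $v(t,\cdot) \in X_0$ at each time $t$. Applying Lemma \ref{lem-stab-1} to $v_0 \in X_0$ gives $v(t,\cdot) \in X_0$ for all $t \in \mathbb{R}$, and reading the two identities in the reverse direction then yields $p(t,\cdot) \in Y_0$ for all $t$, as required.

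I do not expect any genuine obstacle: the bookkeeping is routine and the nontrivial analytic inputs---the smoothing action of $(1-\partial_x^2)^{-1}$, the equivalence of the two linearized flows, and the direct conservation argument for $v$---are already in place. The only mild care required is that the boundary terms in the integrations by parts vanish, which follows immediately from $\phi$ being smooth and $v(t,\cdot) \in H^2_{\rm per}$ under periodicity.
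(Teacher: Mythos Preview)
Your proposal is correct and follows essentially the same approach as the paper: both reduce to Lemma \ref{lem-stab-1} via the correspondence $v = (1-\partial_x^2)^{-1} p$, translate the orthogonality conditions in $Y_0$ to those in $X_0$ by integration by parts, and invoke the equivalence of the two linearized flows from Lemma \ref{lem-equivalency}. Your write-up is slightly more explicit about the regularity bookkeeping, but the argument is the same.
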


\begin{proof}
Orthogonality conditions in (\ref{orth-cond-again}) follow from
those in  (\ref{orth-cond}) by using the relation $v = (1-\partial_x^2)^{-1} p$ 
between solution $v \in C^0(\mathbb{R},H^2_{\rm per}) \cap C^1(\mathbb{R},H^1_{\rm per})$ of (\ref{CH-lin})  
and the corresponding solution $p \in C^0(\mathbb{R},L^2_{\rm per}) \cap C^1(\mathbb{R},H^{-1}_{\rm per})$ of (\ref{CHm-lin-final}). 
In particular, 
$$
\langle 1, v \rangle = \langle 1, v - v_{xx} \rangle = \langle 1, p \rangle
$$ 
and 
$$
\langle \phi - \phi'', v \rangle = \langle \phi, v - v_{xx} \rangle = 
\langle \phi, p \rangle.
$$
The linearized equations	(\ref{CH-lin}) and (\ref{CHm-lin-final}) are equivalent by Lemma \ref{lem-equivalency}.
\end{proof}

\begin{remark}
	The two orthogonality conditions in (\ref{orth-cond}) are 
	related to the conservation of mass (\ref{Mu}) and energy
	(\ref{Eu}) by adding a perturbation of $v$ to the smooth periodic 
	wave $\phi$ and truncating the quadratic terms in $v$. The third orthogonality condition related to the higher-order energy (\ref{Fu}) 
	is redundant due to the other two conditions:
	\begin{equation}
	\label{orth-cond-2}
	\langle \frac{3}{2} \phi^2 - \phi \phi'' - \frac{1}{2} (\phi')^2, v \rangle 
	= c \langle \phi - \phi'', v \rangle - b \langle 1, v \rangle = 0,
	\end{equation}
	where the second-order equation (\ref{CHode}) has been used.
\end{remark}

The following lemma together with Lemma \ref{lem-equivalency}
gives the sufficient condition for  
spectral stability of the periodic wave in Definition 
\ref{defstab-spectral}. 

\begin{lemma}
	\label{lem-stab-2}
	Let $\mathcal{K}|_{Y_0}$ be the restriction of $\mathcal{K}$ on $Y_0 \subset L^2_{\rm per}$. If 
\begin{equation}
\label{condition-positivity}
\mathcal{K}|_{Y_0} \geq 0 \quad \mbox{\rm and} \quad  
{\rm ker}(\mathcal{K}|_{Y_0}) = {\rm ker}(\mathcal{K}), 
\end{equation}
	then the spectrum of $(c-\phi)^{-1} \partial_x (c-\phi)^{-1}  \mathcal{K}$ in $L^2_{\rm per}$ is located on the imaginary axis. 
\end{lemma}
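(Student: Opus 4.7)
The strategy is a Krein-type energy argument exploiting that $\mathcal{J} := (c-\phi)^{-1}\partial_x(c-\phi)^{-1}$ is skew-adjoint on $L^2_{\rm per}$ (by integration by parts, using $c-\phi>0$) while $\mathcal{K}$ is self-adjoint by Lemma \ref{lemma-L-K}. Under the hypothesis $\mathcal{K}|_{Y_0}\geq 0$ with $\ker(\mathcal{K}|_{Y_0})=\ker(\mathcal{K})$, the quadratic form $\langle\mathcal{K}\cdot,\cdot\rangle$ is nonnegative on $Y_0$, which is precisely what is needed to obstruct real exponential growth in an operator of the form (skew)$\cdot$(self-adjoint).

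First I would check that $\mathcal{J}\mathcal{K}$ maps its domain into $Y_0$. The computations in the proof of Lemma \ref{lem-stab-1} give $\langle 1,J\mathcal{L}v\rangle=0$ and $\langle\phi-\phi'',J\mathcal{L}v\rangle=0$ for every $v$ in the domain of $\mathcal{L}$, with no constraint on $v$. Rewriting via $p=v-v_{xx}$ as in Corollary \ref{cor-stab-1}, these become $\langle 1,\mathcal{J}\mathcal{K}p\rangle=0$ and $\langle\phi,\mathcal{J}\mathcal{K}p\rangle=0$. Consequently, if $\mathcal{J}\mathcal{K}p=\lambda p$ with $\lambda\neq 0$ and $p\neq 0$, then $p=\lambda^{-1}\mathcal{J}\mathcal{K}p$ automatically lies in the complexification of $Y_0$.

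For such an eigenpair, using the complex $L^2_{\rm per}$ pairing $\langle u,v\rangle=\int_0^L u\bar{v}\,dx$ together with self-adjointness of $\mathcal{K}$ and skew-adjointness of $\mathcal{J}$, I would compute
$$\lambda\langle\mathcal{K}p,p\rangle=\langle\mathcal{J}\mathcal{K}p,\mathcal{K}p\rangle=-\langle\mathcal{K}p,\mathcal{J}\mathcal{K}p\rangle=-\bar\lambda\langle\mathcal{K}p,p\rangle,$$
where I used that $\langle\mathcal{K}p,p\rangle\in\mathbb{R}$. This yields $2\,\mathrm{Re}(\lambda)\langle\mathcal{K}p,p\rangle=0$. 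Writing $p=p_R+ip_I$ with $p_R,p_I\in Y_0$ and invoking $\mathcal{K}|_{Y_0}\geq 0$ with $\ker(\mathcal{K}|_{Y_0})=\ker(\mathcal{K})$, the vanishing of $\langle\mathcal{K}p,p\rangle$ forces $p_R,p_I\in\ker\mathcal{K}$, hence $\mathcal{K}p=0$ and $\lambda p=\mathcal{J}\mathcal{K}p=0$, contradicting $\lambda,p\neq 0$. Thus every nonzero point eigenvalue satisfies $\mathrm{Re}(\lambda)=0$.

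The same identity extends to the residual and continuous parts of $\sigma(\mathcal{J}\mathcal{K})$ via Weyl sequences, after projecting onto $Y_0$ along the finite-dimensional complement $\mathrm{span}\{1,\phi\}$. The main obstacle I anticipate is the bookkeeping for this projection in the continuous-spectrum case; the point-eigenvalue argument above already captures the essential mechanism, and the remaining verification is standard once the range inclusion $\mathrm{Range}(\mathcal{J}\mathcal{K})\subset Y_0$ established in paragraph two is in place.
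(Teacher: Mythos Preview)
Your argument is essentially the paper's own proof: same skew/self-adjoint pairing, same observation that nonzero eigenfunctions lie in $Y_0$, same computation $(\lambda+\bar\lambda)\langle\mathcal{K}p,p\rangle=0$, and same contradiction via $\ker(\mathcal{K}|_{Y_0})=\ker(\mathcal{K})$. The one point where the paper is cleaner is your final paragraph: rather than invoking Weyl sequences for residual/continuous spectrum, the paper simply notes that $J_\phi\mathcal{K}$, viewed as an unbounded operator with domain $H^1_{\rm per}\subset L^2_{\rm per}$, has purely discrete spectrum by the compact embedding $H^1_{\rm per}\hookrightarrow L^2_{\rm per}$, so only point eigenvalues need to be handled and the argument is already complete.
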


\begin{proof}
	Consider the spectral problem 
	\begin{equation}
	\label{stab-prob}
	J_{\phi} \mathcal{K} p = \lambda p, \quad p \in H^1_{\rm per},
	\end{equation}
	where $J_{\phi} := (c-\phi)^{-1} \partial_x (c-\phi)^{-1}$ satisfies 
	$J_{\phi}^* = - J_{\phi}$ in $L^2_{\rm per}$. The spectrum 
	of $J_{\phi} \mathcal{K}$ is purely discrete due to compact	embedding of $H^1_{\rm per}$ into $L^2_{\rm per}$.
	
	By Corollary \ref{cor-stab-1}, if $\lambda_0$ is an eigenvalue of $J_{\phi} \mathcal{K}$ in $L^2_{\rm per}$ and $\lambda_0 \neq 0$, then the corresponding eigenfunction $p_0$ satisfies 
$p_0 \in H^1_{\rm per} \cap Y_0$. A simple computation 
shows that for this $p_0 \in H^1_{\rm per} \cap Y_0$
\begin{equation*}
\lambda_0 \langle \mathcal{K} p_0, p_0 \rangle = 
\langle \mathcal{K} J_{\phi} \mathcal{K} p_0, p_0 \rangle = 
-\langle \mathcal{K} p_0, J_{\phi} \mathcal{K} p_0 \rangle 
= - \bar{\lambda}_0 \langle \mathcal{K} p_0, p_0 \rangle, 
\end{equation*}
so that 
\begin{equation*}
(\lambda_0 + \bar{\lambda}_0) \langle \mathcal{K} p_0, p_0 \rangle = 0.
\end{equation*}
Since $p_0 \in H^1_{\rm per} \cap Y_0$, then $\langle \mathcal{K} p_0, p_0 \rangle = 0$ if and only if $p_0 \in {\rm ker}(\mathcal{K})$ due to assumptions of the lemma. 
However, this is a contradiction with $\lambda_0 \neq 0$. Hence, 
$\langle \mathcal{K} p_0, p_0 \rangle > 0$, which implies that 
$\lambda_0 \in i \mathbb{R}$. This proves the assertion of the lemma.
\end{proof}

For the proof of spectral stability in Theorem \ref{theorem-stability}, 
it remains to justify the sufficient condition (\ref{condition-positivity}) 
for the operator $\mathcal{K}$. The following proposition from Theorem 4.1 in \cite{Pel-book} formulates the useful result.

\begin{proposition}\cite{Pel-book}
	\label{prop-Pel} Let $L$ be a self-adjoint operator in a Hilbert space $H$ with the inner product $\langle \cdot, \cdot \rangle$ such that $L$ has $n(L)$ negative eigenvalues (counting their multiplicities) 
	and $z(L)$ multiplicity of the zero eigenvalue bounded away from the positive spectrum of $L$. Let $\{ v_j \}_{j = 1}^N$ be a linearly independent set in $H$ and define
	$$
	H_0 := \{ f \in H : \quad \langle f, v_1 \rangle = 
	\langle f, v_2 \rangle = \dots = \langle f, v_N \rangle = 0 \}.
	$$ 
	Let $A(\lambda)$ be the matrix-valued function defined by its elements
	$$
	A_{ij}(\lambda) := \langle (L - \lambda I)^{-1} v_i, v_j \rangle, \quad 1 \leq i,j \leq N, \quad \lambda \notin \sigma(L).
	$$
	Then, 
	\begin{equation}
	\label{count-neg}
	\left\{ \begin{array}{l}
	n(L \big|_{H_0}) = n(L) - n_0 - z_0, \\
	z(L \big|_{H_0})= z(L) + z_0 - z_{\infty},
	\end{array} \right.
	\end{equation}
	where $n_0$, $z_0$, and $p_0$ are the numbers of negative, zero, and positive eigenvalues of $\lim_{\lambda \uparrow 0} A(\lambda)$ (counting their multiplicities) and $z_{\infty} = N - n_0 - z_0 - p_0$ is the number of eigenvalues of $A(\lambda)$ diverging in the limit $\lambda \uparrow 0$.
\end{proposition}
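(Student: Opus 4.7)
The plan is to reduce the constrained eigenvalue problem on $H_0$ to a finite-dimensional analysis of the meromorphic matrix function $A(\lambda)$ via Lagrange multipliers, and then to extract the counts (\ref{count-neg}) from the monotonicity and boundary behavior of $A(\lambda)$.

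First, I would characterize the eigenvalues of $L|_{H_0}$ lying outside $\sigma(L)$. For $\lambda \notin \sigma(L)$, the vector $f \in H_0$ satisfies $L|_{H_0} f = \lambda f$ if and only if there exist multipliers $\mu_1,\dots,\mu_N$ with
\begin{equation*}
(L - \lambda I) f = \sum_{i=1}^{N} \mu_i v_i, \qquad \langle f, v_j \rangle = 0, \quad 1 \leq j \leq N.
\end{equation*}
Inverting $L - \lambda I$ yields $f = \sum_{i} \mu_i (L-\lambda I)^{-1} v_i$, and substitution into the orthogonality constraints reduces the problem to $A(\lambda) \mu = 0$. Hence the eigenvalues of $L|_{H_0}$ outside $\sigma(L)$ coincide with the zeros of $\det A(\lambda)$, with multiplicities preserved thanks to the linear independence of $\{v_i\}$.

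Second, I would exploit the spectral theorem for the self-adjoint $L$ to write
\begin{equation*}
A_{ij}(\lambda) = \int_{\sigma(L)} \frac{d\langle E_s v_i, v_j \rangle}{s - \lambda},
\end{equation*}
so that $A(\cdot)$ is meromorphic on $\mathbb{C}$, real-symmetric on $\mathbb{R} \setminus \sigma(L)$, and has poles exactly at the eigenvalues of $L$. Differentiation gives $A'(\lambda)_{ij} = \langle (L - \lambda I)^{-1} v_i, (L - \lambda I)^{-1} v_j \rangle$, which is the Gram matrix of the family $\{(L-\lambda I)^{-1} v_i\}$ and is therefore positive definite wherever $\{v_i\}$ remain linearly independent. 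The $N$ eigenvalue branches of $A(\lambda)$ are thus strictly increasing on each connected component of $\mathbb{R} \setminus \sigma(L)$, tend to $0$ as $\lambda \to -\infty$, and blow up to $\pm\infty$ at the poles.

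Third, I would count $n(L|_{H_0})$ by a crossing argument: as $\lambda$ increases from $-\infty$ to $0$, each negative eigenvalue of $L$ forces exactly one branch of $A$ to pass through $\pm\infty$, producing one additional zero of $\det A$ on its left. The total number of such zeros on $(-\infty,0)$ equals $n(L|_{H_0})$; matching this against the sign structure of $A(0^{-})$ gives $n(L|_{H_0}) = n(L) - n_0 - z_0$, where $n_0 + z_0$ counts the branches that have not yet crossed zero by $\lambda = 0$. The kernel count is obtained by decomposing $\ker(L|_{H_0})$ into the piece contained in $\ker L \cap H_0$ of dimension $z(L) - z_\infty$ (the $z_\infty$ directions in $\ker L$ being removed by the constraints, as signalled by branches of $A(\lambda)$ escaping to infinity as $\lambda \uparrow 0$) and the subspace of lifts of $\ker A(0)$ of dimension $z_0$, yielding $z(L|_{H_0}) = z(L) + z_0 - z_\infty$.

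The main obstacle will be the rigorous bookkeeping at $\lambda = 0$ when $\ker L \neq \{0\}$ and the $v_i$ have nontrivial components along $\ker L$: the divergences of $A(\lambda)$ as $\lambda \uparrow 0$ must be correctly paired with $\ker L$ through the $z_\infty$ diverging eigenvalue branches, while the finite part of $A(0^{-})$ accounts for the $n_0$, $z_0$, and $p_0$ contributions. A clean way to organize this is a homotopy argument: replace $L$ by $L_\varepsilon = L + \varepsilon P_{\ker L}$ with small $\varepsilon > 0$ to remove the kernel, verify (\ref{count-neg}) in that generic case where $A(0)$ is a bounded invertible matrix and $0 \notin \sigma(L_\varepsilon)$, and then pass to the limit $\varepsilon \downarrow 0$, using monotonicity of $A$ to track the eigenvalue motion and to identify precisely which branches diverge.
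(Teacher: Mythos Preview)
The paper does not give its own proof of this proposition: it is quoted verbatim as Theorem~4.1 in \cite{Pel-book} and used as a black box in Lemmas~\ref{lem-positivity} and in Remark~\ref{remark-limitations}. So there is no argument in the paper to compare yours against.

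That said, your outline is the standard route to results of this type and matches the strategy in the cited reference. The Lagrange-multiplier reduction to $A(\lambda)\mu=0$, the monotonicity of the eigenvalue branches of $A(\lambda)$ coming from the positivity of $A'(\lambda)$ as a Gram matrix, and the crossing count between consecutive poles of $A$ are exactly the ingredients used there. Your treatment of the degenerate case $0\in\sigma(L)$ via the splitting $z(L|_{H_0}) = (z(L)-z_\infty) + z_0$, identifying $z_\infty$ with the number of diverging branches as $\lambda\uparrow 0$, is also the correct bookkeeping. One small caution: the statement that eigenvalues of $L|_{H_0}$ outside $\sigma(L)$ coincide with zeros of $\det A(\lambda)$ needs a separate argument for those eigenvalues of $L|_{H_0}$ that happen to lie in $\sigma(L)$ (where $A$ has a pole); in the reference this is handled by the interlacing of poles and zeros forced by monotonicity, which your crossing argument already implicitly uses, but it is worth making that step explicit rather than relying on the homotopy at the very end.
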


By Lemma \ref{lemma-fixed-period}, for a fixed $c > 0$ and $L > 0$, 
there exists a $C^1$ mapping $a \mapsto b = \mathcal{B}_L(a)$ and 
a $C^1$ mapping $a \mapsto \phi = \Phi_L(\cdot,a) \in H^{\infty}_{\rm per}$ 
of smooth $L$-periodic solutions along the curve $b = \mathcal{B}_L(a)$. 
Along this curve we define 
\begin{equation}
\label{mass-energy-along-the-curve}
\mathcal{M}_L(a) := M(\Phi_L(\cdot,a)) \quad \mbox{\rm and} \quad 
\mathcal{E}_L(a) := E(\Phi_L(\cdot,a)),
\end{equation}
where $M(u)$ and $E(u)$ are given by (\ref{Mu}) and (\ref{Eu}). 
In order to include the dependence on $c$, we will now write 
$\mathcal{M}_L(a,c)$ and $\mathcal{E}_L(a,c)$. 
The following lemma provides the criterion for positivity of $\mathcal{K}|_{Y_0}$ 
based on Proposition \ref{prop-Pel}. 

\begin{lemma}
	\label{lem-positivity}
	For fixed $c > 0$ and $L > 0$, the condition (\ref{condition-positivity}) 
	is satisfied if and only if 
	\begin{equation}
	\label{stability-criterion-again}
	\frac{d}{da} \frac{\mathcal{E}_L(a)}{\mathcal{M}_L(a)^2} < 0
	\end{equation}
	along the curve $b = \mathcal{B}_L(a)$.
\end{lemma}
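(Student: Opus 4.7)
My approach is to apply the constraint-counting Proposition \ref{prop-Pel} with $L = \mathcal{K}$ and $N = 2$ constraint vectors $v_1 = 1$, $v_2 = \phi$. Theorem \ref{theorem-K} together with $\partial_b \mathfrak{L} > 0$ from Theorem \ref{theorem-increasing} gives $n(\mathcal{K}) = 1$ and $z(\mathcal{K}) = 1$ with $\ker(\mathcal{K}) = \mathrm{span}(\phi' - \phi''')$; a short integration by parts confirms that $1 \perp \ker(\mathcal{K})$ and $\phi \perp \ker(\mathcal{K})$, so the limiting matrix $A := \lim_{\lambda \uparrow 0} A(\lambda)$ is finite (hence $z_\infty = 0$) and the required condition $\mathcal{K}|_{Y_0} \geq 0$ with $\ker(\mathcal{K}|_{Y_0}) = \ker(\mathcal{K})$ is equivalent, via Proposition \ref{prop-Pel}, to $n_0 = 1$ and $z_0 = 0$, i.e.\ to $\det A < 0$.

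To evaluate $A$ I need preimages of $1$ and $\phi$ under $\mathcal{K}$ (only the inner products with $1$ and $\phi$ matter, so any representative modulo $\ker \mathcal{K}$ will do). Two identities supply them. A direct computation from $(c-\phi)^2 \mu = a$ yields $\mathcal{K} \mu = a(c - 3\phi)$. Differentiating $(c-\phi)^2 \mu = a$ with respect to $a$ along the fixed-period curve $b = \mathcal{B}_L(a)$, and using Lemma \ref{lemma-fixed-period} to justify $d\Phi_L/da = (1-\partial_x^2)^{-1}(d\mathcal{M}_L/da)$ as an identity in $H^\infty_{\rm per}$, produces $\mathcal{K}(d\mathcal{M}_L/da) = c - \phi$. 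Linear combinations then give the representatives
\[
\mathcal{K}^{-1}(1) \;=\; \frac{3}{2c}\,\frac{d\mathcal{M}_L}{da} - \frac{\mu}{2ac}, \qquad \mathcal{K}^{-1}(\phi) \;=\; \frac{1}{2}\,\frac{d\mathcal{M}_L}{da} - \frac{\mu}{2a}.
\]

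The four entries of $A$ reduce to the integrals $\int \mu\, dx = \mathcal{M}_L$, $\int \phi \mu\, dx = 2\mathcal{E}_L$, $\int (d\mu/da)\, dx = \mathcal{M}_L'$, and $\int \phi (d\mu/da)\, dx = \mathcal{E}_L'$, the last following by integration by parts against $\mu = \phi - \phi''$. Computing $\langle \mathcal{K} \mu,\, d\mathcal{M}_L/da\rangle$ in two different ways via the self-adjointness of $\mathcal{K}$ then produces the key algebraic identity
\[
3a\mathcal{E}_L'(a) - 2\mathcal{E}_L(a) \;=\; c\bigl(a\mathcal{M}_L'(a) - \mathcal{M}_L(a)\bigr),
\]
which simultaneously confirms $A_{12} = A_{21}$ and supplies a second form of the off-diagonal entry. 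After substituting this identity into $\det A = A_{11}A_{22} - A_{12}^2$, all cross terms involving $(\mathcal{M}_L')^2$, $(\mathcal{E}_L')^2$, and $\mathcal{M}_L' \mathcal{E}_L'$ cancel, leaving the single Wronskian-like factor and
\[
\det A \;=\; \frac{\mathcal{M}_L(a)^3}{2ac}\,\frac{d}{da}\!\left(\frac{\mathcal{E}_L(a)}{\mathcal{M}_L(a)^2}\right).
\]
Since $\mathcal{M}_L > 0$ (as $\phi > 0$) and $a, c > 0$, $\det A < 0$ is equivalent to \eqref{stability-criterion-again}, completing the biconditional.

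The step I expect to be least routine is precisely this last algebraic collapse: the raw expansion of $\det A$ contains half a dozen unrelated-looking cross terms, and only after applying the self-adjointness identity does the combination $\mathcal{M}_L \mathcal{E}_L' - 2\mathcal{E}_L \mathcal{M}_L'$ emerge as the sole surviving numerator, which also nicely explains the scale invariance of the stability criterion under the rescaling \eqref{scal-transform}.
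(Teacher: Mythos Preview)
Your argument is correct but takes a genuinely different route from the paper. The paper builds the two preimages by differentiating $(c-\phi)^3\mu = a(c-\phi)$ in \emph{both} parameters $a$ and $c$, obtaining $\mathcal{K}\partial_a\mu = c-\phi$ and $\mathcal{K}\partial_c\mu = -2a$; the resulting matrix entries are derivatives of $\mathcal{M}_L$ and $\mathcal{E}_L$ in $(a,c)$, and the scaling transformation \eqref{scal-transform} is then invoked to collapse everything to a single $\alpha$-derivative. You instead pair $\partial_a\mu$ with the profile $\mu$ itself via $\mathcal{K}\mu = a(c-3\phi)$ (an identity the paper only records later, in \eqref{Lmu1}), which keeps the whole computation inside the one-parameter family $a\mapsto\Phi_L(\cdot,a)$ and replaces the scaling step by the self-adjointness relation $\langle\mathcal{K}\mu,\partial_a\mu\rangle = \langle\mu,\mathcal{K}\partial_a\mu\rangle$. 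Both routes land on the same determinant formula; yours is arguably more self-contained, while the paper's makes the scale invariance structural rather than a consequence of an algebraic identity.

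Two minor points. First, you repeatedly write $d\mathcal{M}_L/da$ where you mean the function $d\mu/da = \partial_a(\Phi_L - \Phi_L'')$; since $\mathcal{M}_L(a)$ is a scalar this is a notational slip you should fix. Second, the ``least routine'' step you flag is actually routine: if you compute $\det A = A_{11}A_{22} - A_{12}A_{21}$ using the two distinct expressions for the off-diagonal entry (rather than squaring one of them), the $M'E'$ and $ME/a^2$ terms cancel immediately and the Wronskian factor $ME' - 2M'E$ drops out without invoking the identity at all. The self-adjointness relation is then only needed to confirm $A_{12}=A_{21}$, which is automatic anyway.
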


\begin{proof}
	Since $\Phi_L(\cdot,a) \in H^{\infty}_{\rm per}$ is also $C^1$ with respect to $c$ as follows from the scaling transformation (\ref{scal-transform}), 
	we are allowed to differentiate the second-order equation 
	(\ref{second-order}) in $a$ and $c$. Writing this equation as 
	$(c-\phi)^3 \mu = a (c-\phi)$ for $\mu = \phi - \phi''$ 
	and differentiating it in $a$ and $c$, we obtain 
\begin{equation}
\label{relationphicb-again}
\mathcal{K} \partial_a \mu = c-\phi, \quad 
\mathcal{K} \partial_c \mu = -2a.
\end{equation} 
Since $a > 0$, we express
$$
\mathcal{K}^{-1} 1 = -\frac{1}{2a} \partial_c \mu, \quad 
\mathcal{K}^{-1} \phi =  -\partial_a \mu - \frac{c}{2a} \partial_c \mu,
$$
By Proposition \ref{prop-Pel}, we construct the bounded $2$-by-$2$ matrix 
$P := \lim\limits_{\lambda \uparrow 0} A(\lambda)$ in
\begin{equation}
\label{matrix-P-again}
P = \left[ \begin{matrix}\langle \mathcal{K}^{-1} 1, 1 \rangle & 
\langle \mathcal{K}^{-1} \phi, 1 \rangle \\
\langle \mathcal{K}^{-1} 1 , \phi\rangle & 
\langle \mathcal{K}^{-1} \phi, \phi \rangle \end{matrix} \right] 
= \left[ \begin{matrix} -\frac{1}{2a}\partial_c \mathcal{M}_L & 
- \partial_a \mathcal{M}_L -\frac{c}{2a} \partial_c \mathcal{M}_L \\
-\frac{1}{2a} \partial_c \mathcal{E}_L & -\partial_a \mathcal{E}_L 
-\frac{c}{2a} \partial_c \mathcal{E}_L \end{matrix} \right],
\end{equation}
where $\mathcal{M}_L$ and $\mathcal{E}_L$ in (\ref{mass-energy-along-the-curve}) are $C^1$ functions in $a$ and $c$. It follows from (\ref{matrix-P-again}) that 
\begin{eqnarray}
\label{det}
{\rm det}(P) = \frac{1}{2a} \left[ \partial_c \mathcal{M}_L \partial_a \mathcal{E}_L
- \partial_a \mathcal{M}_L \partial_c \mathcal{E}_L \right].
\end{eqnarray}
By using the scaling transformation (\ref{scal-transform}), we write
\begin{equation}
\label{scaling-2}
\mathcal{M}_L(a,c) = c \mathcal{\hat{M}}_L(\alpha), \quad \mathcal{E}_L(a,c) = c^2 \mathcal{\hat{E}}_L(\alpha), \quad a = c^3 \alpha,
\end{equation}
where $\mathcal{\hat{M}}_L$ and $\mathcal{\hat{E}}_L$ can be computed 
by formally setting $c = 1$. Substituting the transformation (\ref{scaling-2}) into (\ref{det}), we obtain
\begin{eqnarray}
\nonumber
\det(P) &=& \frac{1}{2\alpha c^4} 
\left[ \mathcal{\hat{M}}_L(\alpha) \mathcal{\hat{E}}_L'(\alpha) - 2\mathcal{\hat{E}}_L(\alpha) \mathcal{\hat{M}}_L'(\alpha) \right] \\
\label{det-P-again}
&=& 
\frac{\mathcal{\hat{M}}_L^3(\alpha)}{2\alpha c^4}  \frac{d}{d \alpha} \left( \frac{\mathcal{\hat{E}}_L(\alpha)}{\mathcal{\hat{M}}_L(\alpha)^2} \right).
\end{eqnarray}
Thus, $\det(P) < 0$ if and only if the condition (\ref{stability-criterion-again}) is satisfied for a given $c > 0$. 
Since $n(\mathcal{K}) = 1$ and $z(\mathcal{K}) = 1$ by Theorem \ref{theorem-existence} independently of $a$ and $c$, we use the 
count formulas (\ref{count-neg}) to get 
$n(\mathcal{K}|_{Y_0}) = 0$ and $z(\mathcal{K}|_{Y_0}) = 1$ 
since $n_0 = 1$, $z_0 = z_{\infty} = 0$. Hence, the conditions (\ref{condition-positivity}) are satisfied if and only if the condition (\ref{stability-criterion-again}) is satisfied.
\end{proof}

Numerical results show that the condition (\ref{stability-criterion-again}) 
is satisfied for every $c > 0$ and $L > 0$ along the curve $b = \mathcal{B}_L(a)$ 
for $a \in (0,a_L)$ in Lemma \ref{lemma-fixed-period}. 
Figure \ref{fig-dependence} shows that the mapping $a \mapsto \frac{\mathcal{E}_L(a)}{\mathcal{M}_L^2(a)}$
is monotonically decreasing for four values of $L$. 
The numerical method used to generate Figure \ref{fig-dependence} 
is described in Appendix \ref{appendix}.

\begin{figure}[htb!]
	\includegraphics[width=0.6\textwidth]{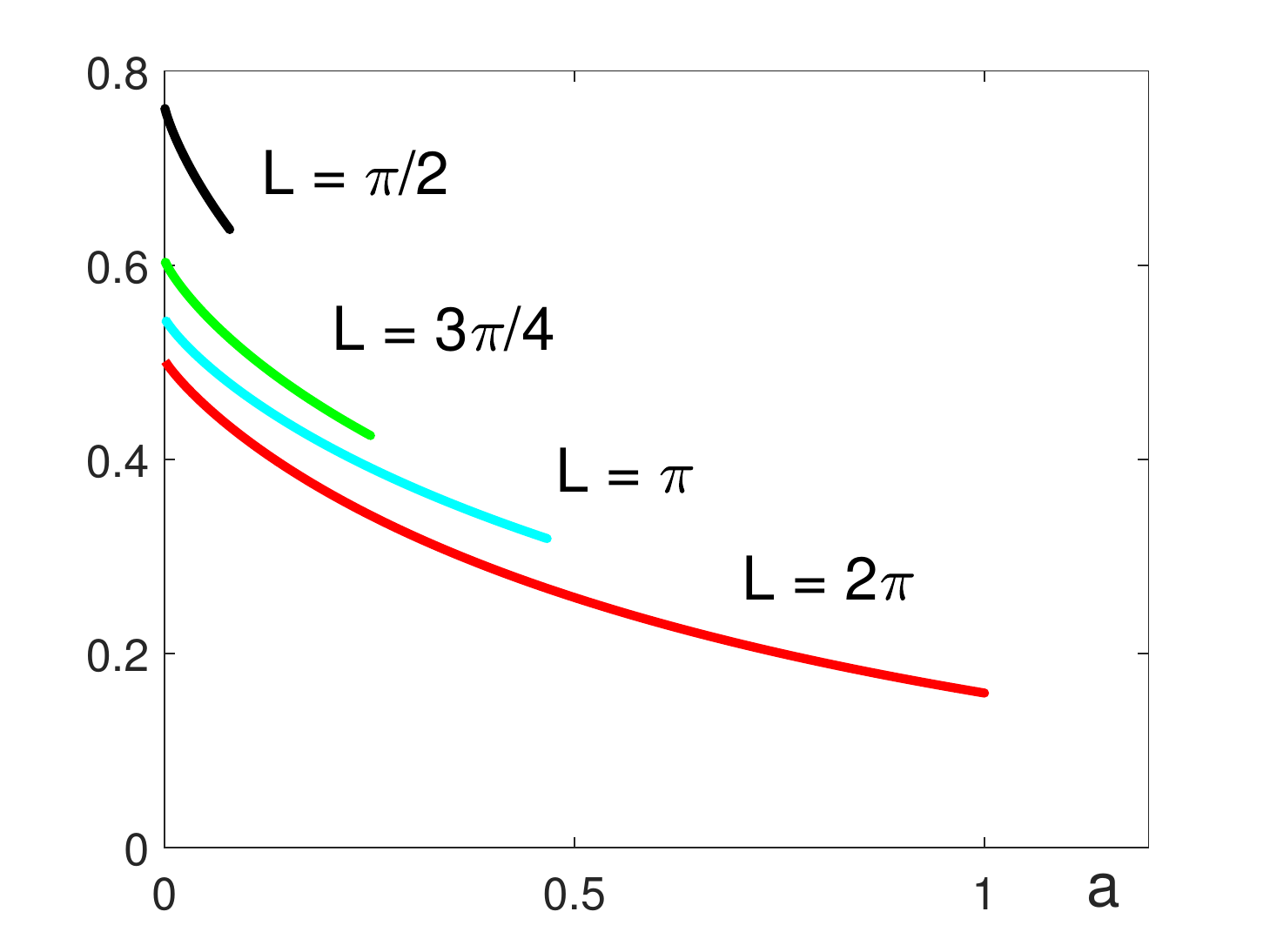}
	\caption{The dependence of $\mathcal{E}_L/\mathcal{M}_L^2$ versus $a$ along the curve $b = \mathcal{B}_L(a)$ for $c = 2$ and five values of 
		period: $L = \pi/2$ (black), $L = 3\pi/4$ (green), $L = \pi$ (cyan), and $L = 2 \pi$ (red).} 
	\label{fig-dependence}
\end{figure}

In the rest of this section, we will explain why the linearized CH equation (\ref{CH-lin}) associated with the operator $J \mathcal{L}$ is not 
convenient for the proof of spectral stability of the smooth periodic waves. 
The following lemma gives the necessary and sufficient condition 
for the $C^1$ continuation of the smooth periodic waves 
with respect to parameter $b$.

\begin{lemma}
	\label{prop-exist-surface}
	For fixed $c > 0$ and $L>0$, there exists a $C^1$ mapping 
	$b \mapsto \phi = \Psi_L(\cdot,b) \in H^{\infty}_{\rm per}$ 
	of smooth $L$-periodic solutions of Lemma \ref{period-to-energy} 
	if and only if $\partial_a \mathfrak{L} \neq 0$, where $\mathfrak{L}(a,b,c)$ is the period function. 
\end{lemma}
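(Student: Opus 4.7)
The plan is to apply the $C^1$ inverse function theorem to the curve $a \mapsto b = \mathcal{B}_L(a)$ already constructed in Lemma \ref{lemma-fixed-period}, using Theorem \ref{theorem-increasing} to control the sign of $\partial_b \mathfrak{L}$. Concretely, differentiating the identity $\mathfrak{L}(a, \mathcal{B}_L(a), c) = L$ gives
\begin{equation*}
\mathcal{B}_L'(a) = -\frac{\partial_a \mathfrak{L}(a, \mathcal{B}_L(a), c)}{\partial_b \mathfrak{L}(a, \mathcal{B}_L(a), c)},
\end{equation*}
and since $\partial_b \mathfrak{L} > 0$ throughout the existence region by Theorem \ref{theorem-increasing}, the derivative $\mathcal{B}_L'(a_0)$ is nonzero precisely when $\partial_a \mathfrak{L}(a_0, b_0, c) \neq 0$ at $b_0 := \mathcal{B}_L(a_0)$.

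For the forward implication, I would suppose $\partial_a \mathfrak{L} \neq 0$ at the point of interest. Then $\mathcal{B}_L'(a_0) \neq 0$, so the $C^1$ inverse function theorem applied to $\mathcal{B}_L$ produces a $C^1$ local inverse $b \mapsto a = \mathcal{A}_L(b)$ on some neighborhood of $b_0$. Composition with the $C^1$ mapping $a \mapsto \Phi_L(\cdot, a) \in H^{\infty}_{\rm per}$ from Lemma \ref{lemma-fixed-period} then yields the desired $C^1$ mapping
\begin{equation*}
b \;\mapsto\; \Psi_L(\cdot, b) \;:=\; \Phi_L(\cdot, \mathcal{A}_L(b)) \;\in\; H^{\infty}_{\rm per}
\end{equation*}
of smooth $L$-periodic solutions along the level curve $\mathfrak{L}(\cdot,\cdot,c) = L$.

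For the converse I would argue by contraposition. Assume $\partial_a \mathfrak{L}(a_0, b_0, c) = 0$, so $\mathcal{B}_L'(a_0) = 0$. By Remark \ref{remark-non-monotone}, for fixed $b$ and $c$ the period function has at most one critical point in $a$, which is a maximum; correspondingly $a_0$ is a local minimum of $b = \mathcal{B}_L(a)$. Hence the level set is locally a fold: values $b > b_0$ have two distinct preimages in $a$ (yielding two geometrically distinct profiles with different extrema $\phi_\pm$), while values $b < b_0$ have none. Thus the level set cannot be written as the graph of a single-valued function $a = \mathcal{A}_L(b)$ in any neighborhood of $(a_0, b_0)$, and no $C^1$ mapping $b \mapsto \phi \in H^{\infty}_{\rm per}$ of $L$-periodic solutions can be defined there.

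The only delicate point is the converse: one must rule out every possible $C^1$ selection of a profile as a function of $b$, not merely the one obtained by directly inverting $\mathcal{B}_L$. The decisive fact is the fold description supplied by Remark \ref{remark-non-monotone}, which guarantees that the two branches meeting at $(a_0, b_0)$ correspond to genuinely different solutions, so no continuous single-valued selection exists on a full neighborhood of $b_0$. The remainder is a routine application of the inverse function theorem and composition of $C^1$ maps into $H^{\infty}_{\rm per}$.
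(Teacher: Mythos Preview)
Your proof is correct and takes a genuinely different route from the paper's. For the forward implication both arguments coincide (implicit/inverse function theorem applied to $\mathfrak{L}(a,b,c)=L$). For the converse, however, the paper does not use the fold geometry from Remark~\ref{remark-non-monotone} at all. Instead it assumes the $C^1$ map $b\mapsto\Psi_L(\cdot,b)$ exists, differentiates \eqref{CHode} in $b$ to obtain $\mathcal{L}\,\partial_b\phi=1$, and then runs a Wronskian argument on the fundamental system $\{y_1,y_2\}$ of $\mathcal{L}v=0$: if $\partial_a\mathfrak{L}=0$ then $y_1$ is $L$-periodic, and a short chain of inner-product identities (using $\mathcal{L}1=c-3\phi+\phi''$ and the relations \eqref{relationphicb}) forces $\langle\phi'',y_1\rangle=0$, while the Wronskian identity forces the same quantity to be nonzero. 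Your geometric argument is shorter and more intuitive, and your handling of the ``delicate point'' (ruling out \emph{any} $C^1$ selection, via continuity of the map $\phi\mapsto a$) is correct. The trade-off is that your converse leans on the external monotonicity result cited in Remark~\ref{remark-non-monotone}, whereas the paper's ODE argument is self-contained; moreover, the paper's proof produces the identity $\mathcal{L}\,\partial_b\phi=1$ as a byproduct, which is then reused in Remark~\ref{remark-limitations} to build the projection matrix $S$.
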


\begin{proof}
	If $\partial_a \mathfrak{L} \neq 0$, then arguments of the proof 
	of Lemma \ref{lemma-fixed-period} based on the implicit function 
	theorem and smoothness of smooth periodic solutions of Lemma 
	\ref{period-to-energy} with respect to parameters 
	gives existence of the $C^1$ mapping $b \mapsto \phi = \Psi_L(\cdot,b) \in H^{\infty}_{\rm per}$.
	
	In the converse direction, we assume existence of the $C^1$ mapping $b \mapsto \phi = \Psi_L(\cdot,b) \in H^{\infty}_{\rm per}$ and prove that 
	$\partial_a \mathfrak{L}\neq 0$. Due to the $C^1$ smoothness, it follows by differentiating the second-order equation (\ref{CHode}) in $c$ and $b$ that 
	\begin{equation}
	\label{relationphicb}
	\mathcal{L}\partial_c\phi=\phi''-
	\phi\ \ \ \mbox{and}\ \ \ \ \mathcal{L}\partial_b\phi=1,
	\end{equation} 
	Let $\{y_1,y_2\}$ be the fundamental set of solutions associated to the equation $\mathcal{L}v=0$ in $H^2(0,L)$ as in (\ref{fund-set}) and (\ref{y1-y2}). By Liouville's theorem, the associated Wronskian is given by 
	\begin{equation}\label{wronskian}
	\mathcal{W}(y_1,y_2)(x)=\displaystyle e^{\int_0^x\frac{\phi'(s)}{c-\phi(s)}ds}=\frac{c-\phi(0)}{c-\phi(x)}>0.
	\end{equation}
	Now, since	$\mathcal{W}(y_1,y_2)(x)=y_1(x)y_2'(x)-y_1'(x)y_2(x)$ for all $x\in \mathbb{R}$ and $y_2(x) = \phi'(x)/\phi''(0)$, we obtain by (\ref{wronskian}) that
	\begin{equation}
	\label{eq-wronsk}
	\phi''(0) \int_0^{L}\frac{c-\phi(0)}{c-\phi(x)}dx=\int_{0}^{L}
	\left[ y_1(x)\phi''(x)-y_1'(x)\phi'(x) \right] dx
	\end{equation}
	By contradiction, assume that $\partial_a \mathfrak{L} = 0$, 
	then $y_1$ is $L$-periodic similar to $y_2$. Integration by parts in (\ref{eq-wronsk}) yields 
	\begin{equation}
	\label{eq-wronsk1}
	\phi''(0) \int_0^{L}\frac{c-\phi(0)}{c-\phi(x)}dx = 2 \langle \phi'', y_1 \rangle.
	\end{equation}
	It follows from (\ref{relationphicb}) that
	\begin{equation}
	\label{eq-wronsk2}
	\langle \phi'', y_1 \rangle = \langle \phi, y_1 \rangle, \qquad 
	\langle 1, y_1 \rangle = 0.	
	\end{equation}
	On the other hand, we also have $\mathcal{L}1=c-3\phi+\phi''$, 
	hence 
	\begin{equation}
	\label{eq-wronsk3}
	\langle \phi'', y_1 \rangle - 3 \langle \phi, y_1 \rangle + c \langle 1, y_1 \rangle = 0.
	\end{equation}
	Substituting (\ref{eq-wronsk2}) into (\ref{eq-wronsk3}) yields $\langle \phi'', y_1 \rangle = 0$, which is a contradiction with the non-zero left-hand side in (\ref{eq-wronsk1}). Hence, $\partial_a \mathfrak{L} = 0$ leads to the contradiction with the $C^1$ smoothness of the mapping $b \mapsto \phi = \Psi_L(\cdot,b) \in H^{\infty}_{\rm per}$. 
\end{proof}

\begin{remark}
	\label{remark-limitations}
By Lemma \ref{prop-exist-surface}, the relation $\mathcal{L}\partial_b \phi = 1$ in (\ref{relationphicb}) cannot be used at the points where $\partial_a \mathfrak{L} = 0$.
Away from these points, the $2$-by-$2$ matrix of projections in 
Proposition \ref{prop-Pel} can be constructed and evaluated 
for the operator $\mathcal{L}$ under the two orthogonality conditions 
in $X_0$ given by (\ref{orth-cond}) as follows:
\begin{eqnarray}
\label{matrix-P}
S := \left[ \begin{matrix}\langle \mathcal{L}^{-1} 1, 1 \rangle & 
\langle \mathcal{L}^{-1} (\phi - \phi''), 1 \rangle \\
\langle \mathcal{L}^{-1} 1, (\phi - \phi'') \rangle & 
\langle \mathcal{L}^{-1} (\phi - \phi''), (\phi - \phi'') \rangle \end{matrix} \right] = \left[ \begin{matrix} \partial_b \mathcal{M}_L & -\partial_b \mathcal{E}_L \\
\partial_c \mathcal{M}_L & -\partial_c \mathcal{E}_L  \end{matrix} \right],
\end{eqnarray}
where $\mathcal{M}_L$ and $\mathcal{E}_L$ are computed at $\phi = \Psi_L(\cdot,b) \in H^{\infty}_{\rm per}$ and extended in both $b$ and $c$. By using the scaling transformation (\ref{scal-transform}), we write
\begin{equation}
\label{scaling-1}
\mathcal{M}_L(c,b) = c \hat{\mathcal{M}}_L(\beta), \quad \mathcal{E}_L(c,b) = c^2 \hat{\mathcal{E}}_L(\beta), \quad b = c^2 \beta.
\end{equation}
Substituting the transformation (\ref{scaling-1}) into (\ref{matrix-P}) yields
\begin{equation}
\label{det-P}
\det(S) = \hat{\mathcal{M}}_L(\beta) \hat{\mathcal{E}}_L'(\beta) - 2 \hat{\mathcal{E}}_L(\beta) \hat{\mathcal{M}}_L'(\beta) = 
\hat{\mathcal{M}}_L^3(\beta) \frac{d}{d \beta} \left( \frac{\hat{\mathcal{E}}_L(\beta)}{\hat{\mathcal{M}}_L(\beta)^2} \right).
\end{equation}
Here the derivative is computed along the curve $b = \mathcal{B}_L(a)$, 
where $\mathcal{B}_L'(a) > 0$ if $n(\mathcal{L}) = 1$ 
and $\mathcal{B}_L'(a) < 0$ if $n(\mathcal{L}) = 2$, 
see Figure \ref{fig-domain}.
In the former case, our numerical results show that 
$\det(S) < 0$ so that $n_0 = 1$, $z_0 = z_{\infty} = 0$ 
and by Proposition \ref{prop-Pel}, we have $n(\mathcal{L}|_{X_0}) = 0$ 
and $z(\mathcal{L}|_{X_0}) = 1$. In the latter case, our numerical 
results give $\det(S) > 0$ and $\hat{\mathcal{M}}_L'(\beta) < 0$  
	so that the $2$-by-$2$ matrix $S$ is negative with $n_0 = 2$, $z_0 = z_{\infty} = 0$ 
	and by Proposition \ref{prop-Pel}, we still have $n(\mathcal{L}|_{X_0}) = 0$ 
	and $z(\mathcal{L}|_{X_0}) = 1$. 
At the points where $\partial_a \mathfrak{L}= 0$, $S$ is unbounded with $n_0 = 1$, $z_0 = 1$ and $z_{\infty} = 1$ which still gives $n(\mathcal{L}|_{X_0}) = 0$ 
and $z(\mathcal{L}|_{X_0}) = 1$ since $n(\mathcal{L}) = 1$ and 
$z(\mathcal{L}) = 2$. Thus, we obtain conditions 
\begin{equation}
\label{condition-positivity-L}
\mathcal{L}|_{X_0} \geq 0 \quad \mbox{\rm and} \quad  
{\rm ker}(\mathcal{L}|_{X_0}) = {\rm ker}(\mathcal{L})
\end{equation}
for the linearized operator $\mathcal{L}$ but with three different 
computations depending on whether $\partial_a \mathfrak{L} < 0$, $\partial_a \mathfrak{L} = 0$, and $\partial_a \mathfrak{L} > 0$.
\end{remark}

\section{Orbital stability of periodic waves}
\label{sec-5}

Here we prove the orbital stability of periodic waves stated in Theorem \ref{theorem-stability}. We follow the approach in \cite{ANP}, where the following useful result was proven in Proposition 3.8 and Theorem 4.2.

\begin{proposition}\cite{ANP}
	\label{prop-Natali}
	Let $V(u)$ be a conserved quantity in the time evolution 
	of the Hamiltonian system (\ref{sympl-1}). Assume that the linearized operator $\mathcal{L}$ at the periodic travelling wave with profile $\phi$ 
	admits a simple negative and a simple zero eigenvalue 
	with ${\rm Ker}(\mathcal{L}) = {\rm span}(\phi')$ satisfying $\langle V'(\phi), \phi' \rangle = 0$. 
	Assume that there exists $\Upsilon \in H^2_{\rm per}$ such that 
	$\langle \mathcal{L}\Upsilon, v \rangle = 0$ for every $v \in L^2_{\rm per}$ such that $\langle V'(\phi), v \rangle = 0$. If $\langle \mathcal{L}\Upsilon, \Upsilon \rangle < 0$, then the periodic travelling wave is orbitally stable in the time evolution of  (\ref{sympl-1}) in $H^1_{\rm per}$.
\end{proposition}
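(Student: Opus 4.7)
The plan is to prove orbital stability via the Grillakis--Shatah--Strauss framework, with the key analytic input being a coercivity estimate for $\mathcal{L}$ on a codimension-two subspace. First I would introduce the constrained subspace
\begin{equation*}
X := \{v \in H^1_{\rm per} : \langle V'(\phi), v\rangle = 0, \ \langle v, \phi'\rangle = 0\}
\end{equation*}
and aim to prove that there exists $C > 0$ such that $\langle \mathcal{L} v, v\rangle \geq C \|v\|_{H^1_{\rm per}}^2$ for all $v \in X$. With such a bound in hand, a functional built from $\Lambda_{c,b}$ and $V$ (with a Lagrange multiplier chosen so that $\phi$ is a constrained critical point) serves as a Lyapunov functional controlling the modulated perturbation.

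The coercivity proof proceeds from the assumption $\langle \mathcal{L}\Upsilon, \Upsilon\rangle < 0$. The orthogonality hypothesis on $\Upsilon$ says that $\mathcal{L}\Upsilon$ annihilates the hyperplane $\{v : \langle V'(\phi), v\rangle = 0\}$, so $\mathcal{L}\Upsilon = \beta V'(\phi)$ for some $\beta \in \mathbb{R}$. Let $\chi_0$ denote the normalized eigenfunction of $\mathcal{L}$ associated to its unique simple negative eigenvalue $-\lambda_0$. I claim $\langle V'(\phi), \chi_0\rangle \neq 0$: otherwise $\beta \langle V'(\phi), \chi_0\rangle = \langle \mathcal{L}\Upsilon, \chi_0\rangle = -\lambda_0 \langle \Upsilon, \chi_0\rangle$ would force the $\chi_0$-component of $\Upsilon$ to vanish, whence $\langle \mathcal{L}\Upsilon, \Upsilon\rangle \geq 0$, contradicting the hypothesis. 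Then for any $v \in X$, decomposing $v = s\chi_0 + v_+$ with $v_+$ in the positive spectral subspace (the kernel direction $\phi'$ is excluded by $\langle v, \phi'\rangle = 0$), the constraint $\langle V'(\phi), v\rangle = 0$ gives $|s| \leq C'\|v_+\|_{L^2_{\rm per}}$. Substituting into $\langle \mathcal{L}v,v\rangle = -\lambda_0 s^2 + \langle \mathcal{L} v_+, v_+\rangle$ and using the spectral gap of $\mathcal{L}$ on the positive subspace yields the desired $H^1_{\rm per}$-coercivity, since the dominant term $-\partial_x(c-\phi)\partial_x$ of $\mathcal{L}$ controls the $H^1$-seminorm thanks to $c-\phi > 0$.

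For the second step I would implement the standard modulation argument. For initial data $u_0$ close to $\phi$ in $H^1_{\rm per}$, the implicit function theorem applied to the map $r \mapsto \langle u(t,\cdot+r)-\phi, \phi'\rangle$ produces, for each $t$ in a maximal interval, a translation $r(t)$ such that $v(t,\cdot) := u(t,\cdot+r(t)) - \phi(\cdot)$ satisfies $\langle v(t), \phi'\rangle = 0$. Conservation of $\Lambda_{c,b}$ and $V$ under the Hamiltonian flow \eqref{sympl-1}, combined with the Taylor expansion
\begin{equation*}
\Lambda_{c,b}(u) - \Lambda_{c,b}(\phi) = \tfrac{1}{2}\langle \mathcal{L}v, v\rangle + R(v), \quad R(v) = o(\|v\|_{H^1_{\rm per}}^2),
\end{equation*}
together with the coercivity bound on $X$, yields $\|v(t)\|_{H^1_{\rm per}} \lesssim \|v(0)\|_{H^1_{\rm per}}$, which is orbital stability in the sense of Definition \ref{defstab}.

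The main obstacle is that conservation of $V$ does \emph{not} force $\langle V'(\phi), v(t)\rangle = 0$ exactly; it only controls this quantity up to a quadratic error $O(\|v(t)\|_{H^1_{\rm per}}^2)$, so $v(t)$ does not lie in $X$ on the nose. The standard remedy is to split $v(t)$ into its $V'(\phi)$-component (whose size is bounded directly by the conserved increment of $V$) and its projection onto $X$ (controlled by the coercivity estimate), then close a bootstrap argument exploiting the quadratic nature of both the error term $R(v)$ and the deviation from the constraint, provided the initial $H^1_{\rm per}$-perturbation is chosen sufficiently small.
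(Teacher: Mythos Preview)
The paper does not prove this proposition; it is quoted from \cite{ANP} (Proposition 3.8 and Theorem 4.2 there) and used as a black box. So there is no ``paper's own proof'' to compare against, and your task effectively becomes giving a self-contained argument for the cited result.

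Your overall strategy (coercivity of $\mathcal{L}$ on a codimension-two subspace, then a modulation/Lyapunov argument) is the right one, and the second half is fine. But the coercivity step contains a genuine gap. From the constraint $\langle V'(\phi),v\rangle=0$ you correctly extract $|s|\le C'\|v_+\|_{L^2_{\rm per}}$ with $C'=\|V'(\phi)\|/|\langle V'(\phi),\chi_0\rangle|$. Substituting yields
\[
\langle \mathcal{L}v,v\rangle \ge -\lambda_0 (C')^2\|v_+\|_{L^2_{\rm per}}^2 + \langle \mathcal{L}v_+,v_+\rangle \ge \bigl(\lambda_1-\lambda_0(C')^2\bigr)\|v_+\|_{L^2_{\rm per}}^2,
\]
where $\lambda_1$ is the spectral gap above zero. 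Nothing in the hypotheses guarantees $\lambda_1>\lambda_0(C')^2$, so this estimate can be negative. In effect you have only used the hypothesis $\langle \mathcal{L}\Upsilon,\Upsilon\rangle<0$ to conclude $\langle V'(\phi),\chi_0\rangle\neq 0$; that qualitative fact is far weaker than the quantitative condition $\langle \mathcal{L}\Upsilon,\Upsilon\rangle<0$, which (since $\mathcal{L}\Upsilon=\beta V'(\phi)$) is equivalent to $\langle \mathcal{L}^{-1}V'(\phi),V'(\phi)\rangle<0$.

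The standard repair is a Lagrange-multiplier/spectral-interpolation argument: assume the constrained infimum $\sigma_*=\inf\{\langle\mathcal{L}v,v\rangle:v\in X,\ \|v\|=1\}$ is negative, obtain a minimizer $v_*$ satisfying $(\mathcal{L}-\sigma_*)v_*=\alpha V'(\phi)$, and study the scalar function $f(\lambda)=\langle(\mathcal{L}-\lambda)^{-1}V'(\phi),V'(\phi)\rangle$ on $(-\lambda_0,0)$. One has $f'(\lambda)>0$, $f(\lambda)\to-\infty$ as $\lambda\downarrow-\lambda_0$ (using $\langle V'(\phi),\chi_0\rangle\neq 0$), and $f(0^-)=\langle\mathcal{L}^{-1}V'(\phi),V'(\phi)\rangle<0$; hence $f$ has no zero on $(-\lambda_0,0)$, contradicting $f(\sigma_*)=0$. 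This is where the full strength of $\langle\mathcal{L}\Upsilon,\Upsilon\rangle<0$ enters.
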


\begin{remark}
	\label{remark-orbital}
	The notion of orbital stability in Definition \ref{defstab} prescribes the existence of global solutions $u \in C(\mathbb{R},H^s_{\rm per})$ for $s > \frac{3}{2}$. The local solutions $u \in C((-t_0,t_0),H^s_{\rm per})$ for some $t_0 > 0$ exist due to the local well-posedness 
	theory in \cite{CE-1998,hakka,HM2002}. Since $M$, $E$, and $F$ are conserved quantities, one can combine the local solution  with the standard a priori estimates $M(u(t)) = M(u_0)$, $E(u(t))=E(u_0)$, and $F(u(t)) = F(u_0)$ for all $t\geq0$ in order to extend the local to global solutions near the smooth periodic waves in the case when they are stable by Proposition \ref{prop-Natali}.
\end{remark}

\begin{remark}
	\label{rem-Natali}
	The result of Proposition \ref{prop-Natali} can be equivalently written for the Hamiltonian system (\ref{sympl-2}) with the conserved quantity $V(m)$ written in variable $m = u - u_{xx}$ and with the linearlized operator $\mathcal{K}$ at the periodic travelling wave $\mu = \phi - \phi''$.
\end{remark}

The following lemma transfers the spectral stability criterion in Lemma \ref{lem-positivity} to the orbital stability criterion.

\begin{lemma} 
	\label{lemma-orbital}
	For fixed $c > 0$ and $L > 0$, the smooth $L$-periodic wave with profile
	  $\phi = \Phi_L(\cdot,a) \in H^{\infty}_{\rm per}$ is orbitally stable in $H^1_{\rm per}$ 
	  if the mapping
	\begin{equation}
		\label{stability-criterion1}
		a \mapsto \frac{\mathcal{E}_L(a)}{\mathcal{M}_L(a)^2}
	\end{equation}
	is strictly decreasing along the curve $b = \mathcal{B}_L(a)$.
	\end{lemma}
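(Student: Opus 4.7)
The plan is to apply Proposition \ref{prop-Natali} in the form of Remark \ref{rem-Natali} using the one-parameter family of conserved quantities
\[
V_\omega(m) := E(u) - \omega M(u), \qquad \omega \in \mathbb{R},
\]
for the $m$-Hamiltonian structure (\ref{sympl-2}), with the scalar $\omega$ to be chosen at the end. Each $V_\omega$ is conserved under (\ref{CHm}) since $M$ and $E$ are. Its functional derivative at $\mu = \phi - \phi''$ is $V'_\omega(\mu) = \phi - \omega$, and the orthogonality $\langle V'_\omega(\mu), \mu' \rangle = 0$ follows from integration by parts together with $\int_0^L \mu' \, dx = 0$. The kernel condition ${\rm ker}(\mathcal{K}) = {\rm span}(\phi')$ needed in Proposition \ref{prop-Natali} is part of Theorem \ref{theorem-existence}.

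Next I would exhibit the required $\Upsilon \in H^{\infty}_{\rm per}$. The identities (\ref{relationphicb-again}) give $\mathcal{K}(-\partial_a\mu) = \phi - c$ and $\mathcal{K}(\partial_c\mu) = -2a$, from which I set
\[
\Upsilon := -\partial_a\mu + \frac{\omega - c}{2a}\,\partial_c\mu,
\]
so that $\mathcal{K}\Upsilon = \phi - \omega = V'_\omega(\mu)$. Hence $\mathcal{K}\Upsilon$ lies in $\mathrm{span}(V'_\omega(\mu))$, and the orthogonality hypothesis of Proposition \ref{prop-Natali} is met automatically.

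The heart of the argument is to verify that the stability criterion allows me to pick $\omega$ with $\langle \mathcal{K}\Upsilon, \Upsilon\rangle < 0$. Writing $V'_\omega(\mu) = -\omega \cdot 1 + 1 \cdot \phi$ in the basis $\{1, \phi\}$ used in (\ref{matrix-P-again}), one obtains
\[
\langle \mathcal{K}\Upsilon, \Upsilon\rangle = \langle \mathcal{K}^{-1} V'_\omega(\mu), V'_\omega(\mu)\rangle = P_{11}\omega^2 - 2 P_{12} \omega + P_{22},
\]
a quadratic in $\omega$ with discriminant $-4\det(P)$. By the computation (\ref{det-P-again}), the strict decrease of $a \mapsto \mathcal{E}_L(a)/\mathcal{M}_L(a)^2$ along $b = \mathcal{B}_L(a)$ is precisely the inequality $\det(P) < 0$, so the discriminant is strictly positive. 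The quadratic therefore takes strictly negative values on a nonempty open interval of $\omega$ (the degenerate case $P_{11} = 0$ forces $P_{12} \neq 0$, since otherwise $\det(P) = 0$, and the resulting linear expression again takes both signs), and any such $\omega$ furnishes the required sign.

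With these ingredients Proposition \ref{prop-Natali}, transported to the $m$-structure via Remark \ref{rem-Natali}, yields orbital stability of $\phi$ in $H^1_{\rm per}$ in the sense of Definition \ref{defstab}; global existence of the solutions needed in the definition is handled as in Remark \ref{remark-orbital} via conservation of $M$, $E$, $F$ combined with the local theory of \cite{CE-1998,hakka,HM2002}. The main technical point I would need to handle carefully is the interplay between the $L^2_{\rm per}$ coercivity of $\langle \mathcal{K} \cdot, \cdot \rangle$ on the constrained subspace (natural for the perturbation $p = v - v_{xx}$) and the $H^1_{\rm per}$ topology demanded for $v$; fortunately the equivalence $\|v\|_{H^2_{\rm per}} \sim \|p\|_{L^2_{\rm per}}$ furnishes stronger control than needed, which is exactly the advantage of working with the nonstandard operator $\mathcal{K}$ rather than $\mathcal{L}$.
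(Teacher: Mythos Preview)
Your argument is correct and is essentially the paper's own proof: both construct $\Upsilon$ as a linear combination of $\partial_a\mu$ and $\partial_c\mu$ via (\ref{relationphicb-again}) and reduce the sign condition $\langle \mathcal{K}\Upsilon,\Upsilon\rangle<0$ to the negativity of $\det(P)$, the only cosmetic difference being that the paper uses the two-parameter family $V=rM+sE$ while you restrict to the slice $s=1$, $r=-\omega$, which suffices because the resulting one-variable quadratic still attains negative values whenever $\det(P)<0$. One minor slip: the kernel of $\mathcal{K}$ is $\mathrm{span}(\mu')$ with $\mu'=\phi'-\phi'''$, not $\mathrm{span}(\phi')$; you in fact use $\mu'$ correctly when verifying $\langle V_\omega'(\mu),\mu'\rangle=0$, so this is a typo rather than a gap.
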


\begin{proof}
 For $\mu=\phi-\phi''$, let us rewrite (\ref{relationphicb-again}) in the form:
	\begin{equation}\label{relation-K-manifold}
		\mathcal{K}\left(\frac{1}{2a} \partial_c\mu\right)=-1,\ \ \ \mathcal{K}\left(\partial_a\mu+\frac{c}{2a}\partial_c\mu\right)=-\phi.
		\end{equation}
For $m = u - u_{xx}$, we define a linear superposition 
of the two conserved quantities (\ref{Mu}) and (\ref{Eu}):
\begin{equation}
\label{lin-sup-cons}
 V(m) := r M(u)+sE(u), \quad u = (1-\partial_x^2)^{-1} m,
\end{equation}
where $r$ and $s$ are real coefficients, $M(u)$ is given by 
(\ref{Mu}), and $E(u)$ is given by (\ref{Em}).
Since $V'(\mu)=r+s\phi$ and 
${\rm Ker}(\mathcal{K}) = {\rm span}(\mu')$, we check that 
$\langle V'(\mu),\mu'\rangle=0$ since $\phi \in H^{\infty}_{\rm per}$. 
By Theorem \ref{theorem-existence}, the linearized operator 
$\mathcal{K}$ satisfies the assumption of Proposition \ref{prop-Natali}. 
We then proceed by constructing $\Upsilon$. Letting 
$$
\Upsilon := \frac{r}{2a} \partial_c\mu + s \left(\partial_a\mu+\frac{c}{2a}\partial_c\mu\right),
$$
it follows from (\ref{relation-K-manifold}) that 
$\mathcal{K} \Upsilon = -r-s\phi$ and 
$\langle\mathcal{K} \Upsilon, p \rangle=0$, for all $p \in Y_0$ 
defined in (\ref{orth-cond-again}).
A straightforward calculation gives us that	
\begin{equation}\label{quadraticK}
	\langle\mathcal{K} \Upsilon,\Upsilon\rangle = r^2\langle \mathcal{K}^{-1}1,1\rangle+2rs\langle \mathcal{K}^{-1}\phi,1\rangle+s^2\langle \mathcal{K}^{-1}\phi,\phi\rangle.
	\end{equation}
The quadratic form (\ref{quadraticK}) in $r$ and $s$ is defined by 
the same $2$-by-$2$ symmetric matrix $P$ as in (\ref{matrix-P-again}).  If condition (\ref{stability-criterion1}) is satisfied, we have that $\det(P) < 0$ and there exists a choice of real coefficients $r$ and $s$ such that $	\langle\mathcal{K} \Upsilon,\Upsilon\rangle < 0$. Hence, the orbital stability 
of the periodic waves in the time evolution of the Hamiltonian system 
(\ref{sympl-2}) follows from Proposition \ref{prop-Natali} and Remark \ref{rem-Natali}.
\end{proof}

In what follows, we show that the orbital stability condition 
is satisfied for every $b \leq 0$. The main advantage of this result is that we do not need to verify the criterion 
(\ref{stability-criterion1}) by using numerical computations. The following lemma 
reports the relevant result.

\begin{lemma}
	\label{lemma-b-neg} 
	For fixed $c > 0$ and $b \leq 0$, the smooth periodic wave 
with profile $\phi \in H^{\infty}_{\rm per}$ is orbitally stable in $H^1_{\rm per}$.
\end{lemma}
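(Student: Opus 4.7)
The plan is to apply Proposition~\ref{prop-Natali} with the single conserved quantity $V(u) = M(u)$, exploiting the structural feature of the case $b \leq 0$: the turning-point equation $(c-\phi_\pm)(\phi_\pm^2 + 2b) = 2a > 0$ together with $c > \phi_\pm > 0$ forces $\phi_\pm^2 > -2b$, so $\phi \geq \phi_- > \sqrt{-2b} \geq 0$ throughout one period. This uniform positivity of $\phi$ is what allows a one-parameter Grillakis--Shatah--Strauss--type criterion to close here, bypassing the two-parameter machinery of Lemma~\ref{lemma-orbital}.

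First I would verify the hypotheses of Proposition~\ref{prop-Natali}. Taking $V = M$ gives $V'(\mu) = 1$ and $\langle V'(\mu), \mu'\rangle = \int_0^L \mu'\,dx = 0$ trivially. By Theorem~\ref{theorem-existence}, $\mathcal{K}$ admits one simple negative eigenvalue, a simple zero eigenvalue with $\mathrm{Ker}(\mathcal{K}) = \mathrm{span}(\mu')$, and the rest of its spectrum strictly positive and bounded away from zero. Since $\langle 1, \mu'\rangle = 0$, the equation $\mathcal{K}\Upsilon = 1$ has a unique solution $\Upsilon \in H^2_{\mathrm{per}} \cap \mathrm{span}(\mu')^\perp$, and the identity $\mathcal{K}\partial_c\mu = -2a$ from (\ref{relation-K-manifold}) identifies $\Upsilon = -\frac{1}{2a}\partial_c\mu$ modulo the kernel. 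Hence
\[
\langle \mathcal{K}\Upsilon, \Upsilon\rangle \;=\; \langle 1, \Upsilon\rangle \;=\; -\frac{1}{2a}\,\partial_c \mathcal{M}_L,
\]
and Proposition~\ref{prop-Natali} delivers orbital stability provided $\partial_c \mathcal{M}_L > 0$.

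The natural representation for the reduction is $\mathcal{M}_L = a\int_0^L (c-\phi)^{-2}\,dx$, which follows from $(c-\phi)^2\mu = a$ and periodicity; the quadrature (\ref{first-order}) rewrites this as a definite integral over $[\phi_-, \phi_+]$. Using the scaling (\ref{scal-transform}) to write $\mathcal{M}_L = c\hat{\mathcal{M}}_L(\alpha)$ along the fixed-$L$ curve $b = \mathcal{B}_L(a)$, the inequality $\partial_c \mathcal{M}_L > 0$ reduces to $\hat{\mathcal{M}}_L > 3\alpha\,\partial_\alpha \hat{\mathcal{M}}_L$ for the normalized mass.

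The main obstacle is the analytic verification of this last inequality throughout the range $b \leq 0$. A concrete strategy is to check it first at the two endpoints of the $a$-interval: at $a \to 0^+$ the peaked-wave limit of Lemma~\ref{remark-left} gives $\mathcal{M}_L \to 2c\tanh(L/2)$, so $\partial_c\mathcal{M}_L \to 2\tanh(L/2) > 0$; at $a \to a_-(b)$ the constant-wave limit of Lemma~\ref{remark-right} gives $\phi \equiv \phi_2 = (c+\sqrt{c^2-6b})/3$ and the explicit algebraic relations permit a direct calculation that again yields $\partial_c\mathcal{M}_L > 0$. Extending the positivity throughout the interior is the genuinely hard step: I would attempt either a direct monotonicity argument on the turning-point integral (using $\phi \geq \sqrt{-2b}$ to control the singular endpoint behavior after differentiation), or a reduction to the criterion (\ref{stability-criterion1}) of Lemma~\ref{lemma-orbital} followed by a convexity estimate in the spirit of the proof of Theorem~\ref{theorem-increasing}.
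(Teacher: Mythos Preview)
Your reduction to the inequality $\partial_c \mathcal{M}_L > 0$ is correct, but the proof is incomplete: you explicitly flag the interior estimate as ``the genuinely hard step'' and only sketch two possible attacks, neither of which you carry out. The endpoint checks at $a\to 0^+$ and $a\to a_-(b)$ are fine, but they do not propagate to the interior without a continuity-plus-nonvanishing argument, and you have not supplied one. In fact the paper elsewhere (Remark~\ref{remark-limitations}) resorts to numerics even for the sign of the closely related quantity $\hat{\mathcal{M}}_L'(\beta)$, so there is no reason to expect your inequality $\hat{\mathcal{M}}_L > 3\alpha\,\hat{\mathcal{M}}_L'$ to fall out of a short monotonicity argument.

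The paper avoids this analytic difficulty entirely by choosing a different $\Upsilon$. Rather than taking $V=M$ and $\Upsilon=\mathcal{K}^{-1}1$, which forces you to understand the parametric $c$-dependence of the family, it takes $\Upsilon=\mu$ itself. From $(c-\phi)^2\mu=a$ one gets immediately
\[
\mathcal{K}\mu=(c-\phi)^3\mu-2a\phi=a(c-\phi)-2a\phi=a(c-3\phi),
\]
so the right conserved quantity is $V=cM-3E$, and $\langle\mathcal{K}\mu,\mu\rangle=a\bigl[cM(\phi)-6E(\phi)\bigr]$. The sign of this last bracket is then settled by the single-wave identity $\langle\mathcal{L}1,\phi\rangle=\langle 1,\mathcal{L}\phi\rangle$, which after expanding $\mathcal{L}1=c-3\phi+\phi''$ and $\mathcal{L}\phi=2b+c(\phi''-\phi)$ gives
\[
cM(\phi)-6E(\phi)=2bL-cM(\phi)-2\int_0^L(\phi')^2\,dx<0
\]
for $b\le 0$, using precisely the positivity $M(\phi)>0$ that your own opening paragraph established. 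No differentiation of the family in $c$ or $a$ is needed; everything happens at the fixed wave.
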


\begin{proof}
	It follows from (\ref{second-order}) and (\ref{hill-m}) that
	\begin{equation}
	\label{Lmu1}
	\mathcal{K} \mu = (c-\phi)^3 \mu - 2a (1-\partial_x^2)^{-1} \mu = a(c-3\phi).
	\end{equation}
We can define $V(m) := c M(u) - 3 E(u)$ so that if 
$\langle V'(\mu), p \rangle = 0$, then $\langle\mathcal{K}\mu, p\rangle=0$. Thus, we can take $\Upsilon := \mu$ and compute
	\begin{equation}\label{Lmu2}
	\langle\mathcal{K}\mu,\mu\rangle = a \left[ c M(\phi) - 6 E(\phi) \right].
	\end{equation}
Since $a > 0$, we check the sign of $c M(\phi) - 6 E(\phi)$:
\begin{eqnarray*}
c M(\phi) - 6 E(\phi) &=& \int_{0}^L \left[ c \phi - 3 \phi^2 - 3 (\phi')^2 \right] dx
\\
&=&  \int_{0}^L \left[ 2b + c \phi'' - c \phi -2 (\phi')^2 \right] dx	\\
\\
&=& 2b L - c M(\phi) - 2 \int_{0}^L  (\phi')^2 dx,
\end{eqnarray*}
where we have used $\mathcal{L}1 = c - 3 \phi + \phi''$ and $\mathcal{L}\phi = 2b + c(\phi'' - \phi)$ and $\langle \mathcal{L}1, \phi \rangle = \langle 1, \mathcal{L} \phi \rangle$. If $b \leq 0$ and $c > 0$, then $\langle\mathcal{K}\mu,\mu\rangle  < 0$ since $M(\phi) > 0$, 
so that the periodic waves are orbitally in the time evolution of the Hamiltonian system (\ref{sympl-2}) by Proposition 
\ref{prop-Natali} and Remark \ref{rem-Natali}.
\end{proof}

\begin{remark}
	\label{remark-boundaries}
	The criterion $\langle \mathcal{K}\mu, \mu \rangle < 0$ of the orbital stability is satisfied near the boundary $a = a_-(b)$ in Lemma \ref{remark-right} both for $b \leq 0$ and $b > 0$. Indeed, we can write 
	$$
	\langle \mathcal{K}\mu, \mu \rangle = a^2 \int_0^L \frac{c-3\phi}{(c-\phi)^2} dx,
	$$
	where $c - 3 \phi < 0$ because the constant solution $\phi= \phi_2$ satisfies the ordering (\ref{ordering}). 
\end{remark}

\begin{remark}
The criterion $\langle \mathcal{K}\mu, \mu \rangle < 0$ is not satisfied near the boundary $a = a_+(b)$ in Lemma \ref{remark-upper} for $b > 0$. Indeed, since $\phi = \phi_1 + \hat{\phi}$, where $\hat{\phi}(x) \to 0$ as $|x| \to \infty$, we derive 
	$$
	c M(\phi) - 6 E(\phi) = L \phi_1 (c - 3 \phi_1) + \mathcal{O}(1),
	$$
	where $\mathcal{O}(1)$ denotes bounded terms in the limit $L \to \infty$. 
	Since $c - 3 \phi_1 > 0$ by the ordering (\ref{ordering}) and $a > 0$, 
	we have $\langle \mathcal{K}\mu, \mu \rangle > 0$ near the boundary $a = a_+(b)$. Thus, the criterion for orbital stability in Lemma \ref{lemma-b-neg} is not as sharp as the criterion in Lemma \ref{lemma-orbital}. 
\end{remark}

\begin{remark}
	\label{teoest} 
The result of Lemma \ref{lemma-b-neg} can be established directly for the linearized operator $\mathcal{L}$. 	Since $\mathcal{L}\phi = 2b + c (\phi'' - \phi)$, we can define $V(u) := 2b M(u) - c E(u)$ so that if 
$\langle V'(u), v \rangle = 0$, then $\langle\mathcal{L}\phi,v\rangle=0$. Thus, we can take $\Upsilon := \phi$ and compute 
	\begin{equation}\label{Lphi3}
	\langle\mathcal{L} \phi,\phi \rangle = 2b M(\phi) - 2 c E(\phi).
	\end{equation}
If $b \leq 0$ and $c > 0$, then $\langle \mathcal{L}\phi, \phi \rangle < 0$ 
since $M(\phi) > 0$. However, Proposition \ref{prop-Natali} can only be used if $\mathcal{L}$ has a simple negative eigenvalue, which is only true in a subset 
of $b \leq 0$, where $\partial_a \mathfrak{L} < 0$. Similar to Remark \ref{remark-limitations}, we can see that the linearized operator $\mathcal{K}$ provides wider region for orbital stability compared to the linearized operator $\mathcal{L}$.
\end{remark}

\section{Conclusion}
\label{sec-6}

We have studied spectral and orbital stability of smooth periodic travelling waves in the Camassa--Holm (CH) equation by using functional-analytic tools. We showed that the standard Hamiltonian formulation of the CH equation has several shortcomings, e.g. the number of negative eigenvalues in the linearized operator is either one or two depending on the parameters of the periodic travelling wave. On the other hand, the nonstandard Hamiltonian formulation based on the momentum quantity $m := u - u_{xx}$ provides a better framework for analysis with only one simple negative eigenvalue of the associated linearized operator. 

The criterion for spectral and orbital stability has been derived by using the nonstandard Hamiltonian formulation. The stability criterion has been checked numerically and it is an open problem to prove analytically that this criterion is satisfied in the entire existence region for the smooth periodic travelling waves. We proved analytically that the stability criterion is satisfied in a subset of the existence region. 

Since the CH equation is a prototypical example of a more general 
class of nonlinear evolution equations, it is expected that our methods 
will be useful for the analysis of spectral and orbital stability in the systems 
where other methods based on the inverse scattering transform are not applicable, e.g., for the $b$-family of the CH equations. 

\appendix
\section{Smooth periodic waves in the explicit form}
\label{appendix}

Here we derive the explicit expressions for the smooth periodic wave with the profile $\phi$ satisfying the system (\ref{CHode}), (\ref{second-order}), and (\ref{quadra}). Since $\phi < c$, we can transform the variables 
\begin{equation}
\label{CH-KDV}
\phi(x)=\psi(z(x)), \quad
z(x)=\int_0^x\frac{ds}{\sqrt{c-\phi(s)}}
\end{equation}
and rewrite the second-order equation (\ref{CHode}) with the chain rule to the form
\begin{equation}
\label{KDVode}
-\psi'' + c \psi -\frac{3}{2} \psi^2 = b,
\end{equation}
which is the stationary KdV equation. Note that this reduction of the 
travelling periodic waves of the CH equation (\ref{CH}) to the travelling periodic waves of the KdV equation is different from the previously explored connection 
between the CH and KdV equations in \cite{LenCHkdv}. A similar transformation 
was used in \cite{CS2} in the context of the solitary waves at the zero background. 

The second-order equation (\ref{KDVode}) has the following explicit periodic solution (see, e.g., \cite{NLP}):
\begin{equation}
\label{exact-KdV}
\psi(z) = \frac{1}{3} c + \frac{4}{3} \gamma^2 \left[ 1 - 2 k^2 + 3 k^2 {\rm cn}^2(\gamma z;k) \right],
\end{equation}
where $\gamma > 0$ and $k \in (0,1)$ are arbitrary parameters, 
and ${\rm cn}$ is the Jacobian elliptic function. 
The period of the periodic solution $\psi$ in $z$ is 
$P = 2 \gamma^{-1} K(k)$. The free parameters $\gamma$ and $k$ 
parametrize the turning points $\phi_-$ and $\phi_+$ in (\ref{ordering-turning-points}) and (\ref{roots-turning-points}):
\begin{equation}
\label{turning-points-KdV}
\left\{ \begin{array}{l} \phi_+ = \frac{1}{3} c + \frac{4}{3} \gamma^2 (1 + k^2), \\ \phi_- = \frac{1}{3} c + \frac{4}{3} \gamma^2 (1 - 2 k^2), \end{array} \right.
\end{equation}
which can be inverted as follows 
\begin{equation}
\label{turning-points-KdV-inverse}
\gamma^2 = \frac{1}{4} (2 \phi_+ + \phi_- - c), \quad
k^2 = \frac{\phi_+ - \phi_-}{2 \phi_+ + \phi_- - c}.
\end{equation}
Parameters $a$ and $b$ are related to parameters $\gamma$ and $k$ by 
\begin{align}
\nonumber
a &= \frac{1}{2} (\phi_+ + \phi_-) (c - \phi_+) (c - \phi_-) \\
&= \frac{4}{27} (c + 2 \gamma^2(2-k^2)) (c - 2 \gamma^2(1+k^2)) (c - 2 \gamma^2(1-2 k^2)) 
\label{appendix-a}
\end{align}
and
\begin{align}
\nonumber
b &= \frac{1}{2} c (\phi_+ + \phi_-) - \frac{1}{2} 
(\phi_+^2 + \phi_+\phi_- + \phi_-^2) \\
&= \frac{1}{6} c^2 - \frac{8}{3} \gamma^4 (1 - k^2 + k^4),
\label{appendix-b}
\end{align}
where equations (\ref{roots-turning-points}) have been used. When $a$ or $b$ are fixed, e.g., for numerical results obtained 
on Figs. \ref{fig-graphs-b} and \ref{fig-graphs}, we express $\gamma$ 
from the roots of either (\ref{appendix-a}) or (\ref{appendix-b}) 
and parameterize the family by the only parameter $k$ in a subset of $(0,1)$.

The period function $L = \mathfrak{L}(a,b,c)$ can be expressed by (\ref{CH-KDV}) and (\ref{exact-KdV}) in the form:
\begin{equation}
\label{period-KdV}
L = \frac{\sqrt{2}}{\gamma \sqrt{3}} \int_0^{2K(k)} \sqrt{c - 2 \gamma^2 (1 - 2 k^2 + 3k^2 {\rm cn}^2(z;k))} dz.
\end{equation}
If $L$ is fixed, e.g., for numerical results obtained on Figs. \ref{fig-domain} and \ref{fig-dependence}, 
then $\gamma$ can be found from a root finding algorithm for equation (\ref{period-KdV}), after which the periodic solutions are parameterized 
by the only parameter $k$ in a subset of $(0,1)$. The mass and energy integrals 
in (\ref{Mu}) and (\ref{Eu}) are evaluated at the periodic wave (\ref{CH-KDV}) with the chain rule:
\begin{equation}
M(\phi) = \int_0^{2 \gamma^{-1} K(k)} \psi(z) \sqrt{c - \psi(z)} dz
\end{equation}
and 
\begin{equation}
E(\phi) = \int_0^{2 \gamma^{-1} K(k)} \left[ b + \psi(z)^2 - \frac{a}{c - \psi(z)} \right] \sqrt{c - \psi(z)} dz,
\end{equation}
where we have used $(\phi')^2 = \phi^2 + 2 b - 2a/(c-\phi)$.

The limit $k \to 0$ corresponds to the constant solution 
\begin{equation}
\psi = \frac{1}{3} c + \frac{4}{3} \gamma^2
\end{equation}
in Lemma \ref{remark-right}. The limit $k \to 1$ corresponds to the solitary wave solution 
\begin{equation}
\label{soliton-limit}
\psi(z) = \frac{1}{3} c + \frac{4}{3} \gamma^2 \left[ -1 + 3 {\rm sech}^2(\gamma z) \right]
\end{equation}
in Lemma \ref{remark-upper}. The curve $c = 2 \gamma^2 (1+k^2)$ corresponds to the peaked periodic wave
\begin{equation}
\psi(z) = 2 \gamma^2 \left[ 1 - k^2 + 2 k^2 {\rm cn}^2(\gamma z;k) \right],
\end{equation}
in Lemma \ref{remark-left}.

\end{document}